\newtheorem{theorem}{Theorem}
\numberwithin{theorem}{section}
\newtheorem{corollary}[theorem]{Corollary}
\newtheorem{lemma}[theorem]{Lemma}
\newtheorem{proposition}[theorem]{Proposition}
\theoremstyle{definition}
\newtheorem{definition}[theorem]{Definition}
\newtheorem{example}[theorem]{Example}
\numberwithin{equation}{section}
\newcommand{\rng}{\operatorname{rng}}
\newcommand{\lo}{\mathsf{LO}}
\newcommand{\wo}{\mathsf{WO}}
\newcommand{\po}{\mathsf{PO}}
\newcommand{\To}{\Rightarrow}
\newcommand{\supp}{\operatorname{supp}}
\newcommand{\nf}{=_{\operatorname{NF}}}
\newcommand{\ang}[1]{\langle#1\rangle}
\newcommand{\te}{\textsf{T}}
\newcommand{\se}{\textsf{S}}
\newcommand{\ot}{\textsf{OT}}
\newcommand{\T}{\mathcal{T}}
\title{Bachmann-Howard Derivatives}
\author{Anton Freund}
\address{Fachbereich Mathematik, Technische Universit{\"a}t Darmstadt, Schlossgartenstr.~7, 64289 Darmstadt, Germany}
\email{freund@mathematik.tu-darmstadt.de}
\begin{document}

\begin{abstract}
It is generally accepted that H.~Friedman's gap condition is closely related to iterated collapsing functions from ordinal analysis. But what precisely is the connection? We offer the following answer: In a previous paper we have shown that the gap condition arises from an iterative construction on transformations of partial orders. Here we show that the parallel construction for linear orders yields familiar collapsing functions. The iteration step in the linear case is an instance of a general construction that we call `Bachmann-Howard derivative'. In the present paper, we focus on the unary case, i.\,e., on the gap condition for sequences rather than trees and, correspondingly, on addition-free ordinal notation systems. This is partly for convenience, but it also allows us to clarify a phenomenon that is specific to the unary setting: As shown by van der Meeren, Rathjen and Weiermann, the gap condition on sequences admits two linearizations with rather different properties. We will see that these correspond to different recursive constructions of sequences.
\end{abstract}

\subjclass[2010]{03F15, 05C05, 06A06, 18A35}
\keywords{Ordinal collapsing functions, Friedman's gap condition, Bachmann-Howard ordinal, dilators, well partial orders}

\maketitle

\section{Introduction}

There is clearly a parallel between Higman's lemma~\cite{higman52} and Kruskal's theorem~\cite{kruskal60} on embeddings of sequences and trees, respectively. Not least, this parallel is manifest in the fact that both results have an elegant proof by Nash-Williams's minimal bad sequence method~\cite{nash-williams63}. If one wants to make the parallel more precise, it is natural to start with the observation that both sequences and trees are recursive data types. Such a data type can be constructed as the initial fixed point of a suitable transformation. For example, the initial fixed point of $Z\mapsto 1+X\times Z$ is the set~$\textsf{Seq}(X)$ of sequences with entries in~$X$, while the initial fixed point of $X\mapsto\textsf{Seq}(X)$ is the set of ordered finite trees. In~\cite{frw-kruskal} we have studied certain general transformations of partial orders that we call normal $\po$-dilators (alluding to \mbox{Girard's}~\cite{girard-pi2} dilators on linear orders). If $W$ is a normal $\po$-dilator, then its initial fixed point $\mathcal TW$ carries a canonical partial order. In the aforementioned examples, this order coincides with the usual embedding relation from Higman's lemma and Kruskal's theorem (as already observed by Hasegawa~\cite{hasegawa94,hasegawa97}). Thus both these results are instances of a general fact, which we call the uniform Kruskal theorem: If the normal \mbox{$\po$-}dilator~$W$ preserves well partial orders (\textsc{wpo}s), then the so-called Kruskal fixed point~$\mathcal TW$ is a \textsc{wpo} itself. Together with Rathjen and Weiermann, the present author has shown that the uniform Kruskal theorem is equivalent to $\Pi^1_1$-comprehension~\cite{frw-kruskal} (in the setting of reverse mathematics~\cite{simpson09}). This is particularly interesting because it means that the uniform Kruskal theorem exhausts the full strength of the minimal bad sequence method (which has been analyzed by Marcone~\cite{marcone-bad-sequence}), in contrast to Kruskal's original theorem.

Harvey Friedman has introduced a gap condition on embeddings of trees, which leads to a much stronger version of Kruskal's theorem (see the presentation by Simpson~\cite{simpson85}).  Sch\"utte and Simpson~\cite{schuette-simpson} have studied the corresponding condition for embeddings of sequences. It has been observed that the gap condition is related to an interative construction (in particular by Hasegawa~\cite{hasegawa94,hasegawa97}). One way to make this precise has been worked out by the present author~\cite{freund-kruskal-gap}: Given a normal \mbox{$\po$-}dilator~$W$ and a partial order~$X$, one can construct a relativized Kruskal fixed point $\mathcal TW(X)$ that comes with a bijection
\begin{equation}\label{eq:Kruskal-FP}
\iota_X+\kappa_X:X+W(\mathcal TW(X))\to\mathcal TW(X).
\end{equation}
Here addition denotes disjoint union, i.\,e., we have functions $\iota_X:X\to\mathcal TW(X)$ and $\kappa_X:W(\mathcal TW(X))\to\mathcal TW(X)$ such that $\mathcal TW (X)$ is the disjoint union of their images. The order on $\mathcal T W(X)$ is determined by certain inequalities between the values of~$\iota_X$ and $\kappa_X$ (see~\cite{freund-kruskal-gap}). The transformation $X\mapsto\mathcal T W(X)$ can again be equipped with the structure of a normal $\po$-dilator, which we call the Kruskal derivative of~$W$. From now on, the notation $\mathcal T W$ will be reserved for this \mbox{$\po$-}dilator. The single fixed point from~\cite{frw-kruskal} should thus be denoted by  $\mathcal T W(0)$, where~$0$ stands for the empty order. The principle of $\Pi^1_1$-comprehension is still equivalent to the statement that $\mathcal T W$ preserves \textsc{wpo}s if~$W$ does. Now that $\mathcal T W$ is a transformation rather than a single order, we can iterate the construction: Let~$\mathbb T_0$ be the identity on partial orders, considered as a normal $\po$-dilator. Given $\mathbb T_n$, define $\mathbb T_{n+1}^-$ as the Kruskal derivative of~$\textsf{Seq}\circ\mathbb T_n$. Then put $\mathbb T_{n+1}:=\mathbb T_n\circ\mathbb T_{n+1}^-$. In~\cite{freund-kruskal-gap} it is shown that $\mathbb T_n(0)$ is isomorphic to the set of trees with labels in~$\{0,\dots,n-1\}$, ordered according to Friedman's strong gap condition. The analogous (but much simpler) result for a certain collection of sequences is discussed below.

The equivalence between $\Pi^1_1$-comprehension and the uniform Kruskal theorem has been derived from a previous result on the level of linear orders. Each dilator~$D$ (i.\,e., each suitable transformation of well orders) gives rise to a linear order~$\vartheta D$, which we call the Bachmann-Howard fixed point of~$D$ (since it relativizes the Bachmann-Howard ordinal or, more precisely, the notation system from~\cite{rathjen-weiermann-kruskal}). By an earlier result of the author~\cite{freund-thesis,freund-equivalence,freund-categorical,freund-computable}, $\Pi^1_1$-comprehension is equivalent to the statement that $\vartheta D$ is a well order for any dilator~$D$. In the present paper, we will relativize the construction of~$\vartheta D$ to a linear order~$X$. This results in a Bachmann-Howard fixed point $\vartheta D(X)$ over~$X$, which comes with a bijection
\begin{equation*}
\iota_X+\vartheta_X:X+D(\vartheta D(X))\to\vartheta D(X),
\end{equation*}
analogous to~(\ref{eq:Kruskal-FP}). The appropiate inequalities between different values of $\iota_X$ and $\vartheta_X$ will, once again, be a crucial part of the definition. As above, the notation $\vartheta D$ is now reserved for the transformation $X\mapsto\vartheta D(X)$, while the single fixed point from~\cite{freund-computable} should be denoted by~$\vartheta D(0)$. We will see that the transformation~$\vartheta D$ can again be equipped with the structure of a dilator. This dilator~$\vartheta D$ will be called the Bachmann-Howard derivative of~$D$.

Let us discuss the relation between Kruskal and Bachmann-Howard derivatives. By a linearization of a partial order~$P$ by a linear order~$X$ we shall mean an order-reflecting surjection~$f:X\to P$. Here, order-reflecting means that $f(x)\leq_P f(y)$ implies~$x\leq_X y$, which also ensures that~$f$ is injective. Note that this coincides with the usual notion of linearization if we identify~$X$ with its image under~$f$. A linearization of a $\po$-dilator~$W$ by a dilator~$D$ is a natural family of linearizations $D(X)\to W(X)$, one for each linear order~$X$. We will show that any linearization of~$W$ by~$D$ can be transformed into a linearization of the Kruskal derivative~$\mathcal T W$ by the Bachmann-Howard derivative~$\vartheta D$.

We now consider an application of our general constructions. In~\cite{MRW-linear}, van der Meeren, Rathjen and Weiermann study a certain collection of sequences with gap condition, as well as its linearization by iterated collapsing functions. We will give the following systematic reconstruction of these objects:
\begin{enumerate}
\item Let $\textsf{S}^0_0$ and $\textsf{T}^0_0$ be the identity on partial and linear orders, respectively (considered as a normal $\po$-dilator and a dilator in the sense of Girard).
\item Define $\textsf{S}^0_{n+1}$ as the Kruskal derivative of~$\textsf{S}^0_n$, and $\textsf{T}^0_{n+1}$ as the Bachmann-Howard derivative of~$\textsf{T}^0_n$.
\item Write~$1$ for the order with a single element. Then $\textsf{S}^0_n(1)$ coincides with the set of sequences~$\overline{\mathbb S}_n[0]$ from~\cite[Definition~12]{MRW-linear}, ordered by Friedman's strong gap condition. Furthermore, $\textsf{T}^0_n(1)$ coincides with the system~$T_n[0]$ of collapsing functions from~\cite[Definitions~24 to~27]{MRW-linear}.
\end{enumerate}
Now the fact that $T_n[0]\cong\textsf{T}^0_n(1)$ is a linearization of~$\overline{\mathbb S}_n[0]\cong\textsf{S}^0_n(1)$ is immediate by the general result from the previous paragraph, which thus replaces the explicit verification in~\cite[Lemmas 10 and~11]{MRW-linear}. More importantly, our reconstruction clarifies two conceptual points. First, it confirms that gap condition and collapsing functions are closely related, maybe even more closely than expected: they arise by entirely parallel constructions on partial and linear orders, respectively. Secondly, the collapsing functions studied in~\cite{MRW-linear} (and the variant with addition in~\cite{WW-simultaneous-theta}) are supposed to generalize Rathjen's notation system for the Bachmann-Howard ordinal (see~\cite{frw-kruskal}). But do they provide ``the right" generalization? Our reconstruction shows that, in a certain precise sense, the answer is positive.

In order to indicate that our constructions cover a larger range of applications, we sketch two possible modifications. In the first of these we put $\te_0=\te^0_0$ and $\te_{n+1}=\te_n\circ\te_{n+1}^-$, where $\te_{n+1}^-$ is defined as the Bachmann-Howard derivative of~$\textsf{T}_n$ (note the similarity with the clause $\mathbb T_{n+1}=\mathbb T_n\circ\mathbb T_{n+1}^-$ from the discussion of \cite{freund-kruskal-gap} above). It seems that $\textsf{T}_n(1)$ is a linearization of the order~$\mathbb S_n$ from~\cite{MRW-linear}, which is more liberal than the order~$\overline{\mathbb S}_n[0]$. For the second modification, consider a dilator~$S$ that linearizes the $\po$-dilator~$\textsf{Seq}$ from above (e.\,g.~take $S=\omega_2$ as below). Let $D_0$ be the identity on linear orders, define $D_{n+1}^-$ as the Bachmann-Howard derivative of~$S\circ D_n$, and set $D_{n+1}:=D_n\circ D_{n+1}^-$. Since the construction is entirely parallel to the one from~\cite{freund-kruskal-gap} (discussed above), this should yield a linearization of Friedman's gap condition on trees. We expect that the linear orders~$D_n(0)$ are closely related to the iterated collapsing functions with addition that are studied in~\cite{WW-simultaneous-theta}. Details of both modifications remain to be checked. We have sketched them to indicate the potential breadth of our approach.

As observed by van der Meeren, Rathjen and Weiermann~\cite{MRW-linear}, the linearization of $\textsf{S}^0_n(1)\cong\overline{\mathbb S}_n[0]$ by $\textsf{T}^0_n(1)\cong T_n[0]$ does not have maximal order type. It is expected that this phenomenon is specific to the case of sequences, i.\,e., that collapsing functions do exhaust the maximal order type for trees. Our approach provides some justification for this expectation, or at least a systematic explanation. To present the latter, we consider sequences on a more concrete level: The elements of~$\textsf{S}^0_n(X)$ can be represented in the form $\langle i_1,\dots,i_k,x\rangle$ with $i_1=0$ and $x\in X$ (and with some further conditions, see Definition~\ref{def:se-order} below). In view of~(\ref{eq:Kruskal-FP}), the fact that $\textsf{S}^0_{n+1}$ is the Kruskal derivative of~$\textsf{S}^0_n$ is witnessed, amongst others, by injections
\begin{equation*}
\kappa^n_X:\textsf{S}^0_n(\textsf S^0_{n+1}(X))\to\textsf S^0_{n+1}(X).
\end{equation*}
We will see that these functions are given by
\begin{equation*}
\kappa^n_X(\langle i_1,\dots,i_k,\langle j_1,\dots,j_l,x\rangle\,\rangle)=\langle 0,i_1+1,\dots,i_k+1, j_1,\dots,j_l,x\rangle.
\end{equation*}
Note that $j_1$ is the second entry on the right that is equal to zero, which guarantees that $\kappa^n_X$ is injective. To generalize the construction from sequences to trees, think of the last entry of an element $\langle i_1,\dots,i_k,x\rangle\in\textsf{S}^0_n(X)$ as a leaf labelled by~$x$. Indeed, the orders~$\mathbb T_n(X)$ from above (studied in~\cite{freund-kruskal-gap}) consist of trees with labels from~$\{0,\dots,n-1\}\cup X$, where labels from~$X$ are allowed at leaves only. We can now point out a crucial difference between sequences and trees: A sequence different from $\langle\rangle$ (the empty sequence) has a single leaf with known location (the last entry). This means that it suffices to record the leaf label and the rest of the sequence or, more formally, that we have a bijection
\begin{align*}
\{\langle\rangle\}\cup\left(X\times\textsf{S}^0_n(\{\star\})\right)&{}\cong\textsf S^0_n(X),\\
\langle x,\langle i_1,\dots,i_k,\star\rangle\,\rangle&{}\mapsto\langle i_1,\dots,i_k,x\rangle.
\end{align*}
In the case of trees, there are many possible locations for leaf labels, and a similar bijection does not appear to be available. It turns out that we get an alternative construction of the gap condition on sequences (but not on trees). To describe this construction, we recall that the set $\textsf{Seq}(Z)$ of finite sequences in~$Z$ is the initial Kruskal fixed point of the transformation $X\mapsto 1+Z\times X$, i.\,e., that we have
\begin{equation*}
\textsf{Seq}(Z)=\mathcal T W(0)\quad\text{with}\quad W(X)=1+Z\times X.
\end{equation*}
Here $Z\times X$ is the usual product of partial orders, where $(z,x)\leq_{Z\times X}(z',x')$ is equivalent to the conjunction of~$z\leq_Z z'$ and~$x\leq_X x'$. The order $1+Z\times X$ contains a further element but no other strict inequalities. We will see that there is an isomorphism
\begin{equation}\label{eq:gap-seq-alternative}
\textsf{S}^0_{n+1}(1)\cong\textsf{Seq}(\textsf{S}^0_n(1))
\end{equation}
of partial orders, for each~$n\in\mathbb N$. Together with (1) to (3) from above, this yields a second construction of sequences with gap condition in terms of Kruskal derivatives (or fixed points). While the two constructions coincide for partial orders, it turns out that they differ in the linear case. Given a linear order~$Z$, we define $\omega_2(Z)$ as the initial Bachmann-Howard fixed point of the transformation $X\mapsto 1+Z\times X$, i.\,e., we set
\begin{equation}\label{eq:double-exp-fixed-point}
\omega_2(Z)=\vartheta D(0)\quad\text{with}\quad D(X)=1+Z\times X.
\end{equation}
Note that, in the linear case, the single element of~$1$ lies below all elements of~$Z\times X$, while $(z,x)\leq_{Z\times X}(z',x')$ holds if we have $z<_Z z'$ or ($z=z'$ and $x\leq_X x'$). For an ordinal~$\alpha$ and $n\in\mathbb N$, the ordinal $\omega_n^\alpha$ is explained by the recursive clauses $\omega_0^\alpha=\alpha$ and $\omega_{n+1}^\alpha=\omega^{\omega_n^\alpha}$ (towers of exponentials in the sense of ordinal arithmetic). The resulting ordinal~$\omega_2^\alpha$ is isomorphic to the order $\omega_2(1+\alpha)$ from~(\ref{eq:double-exp-fixed-point}), as shown in~\cite{freund-predicative-collapsing}. We now define linear orders~$\sf{OT}^0_n$ by the recursive clauses
\begin{equation}\label{eq:gap-lin-alternative}
\sf{OT}^0_0=1\quad\text{and}\quad\sf{OT}^0_{n+1}=\omega_2(\sf{OT}^0_n).
\end{equation}
As we will see, $\ot^0_n$ coincides with the order $OT_n[0]$ from~\cite[Section~5]{MRW-linear} (up to a typo in the cited reference, see Section~\ref{sect:application-2} below). Important results of the cited paper can now be deduced from general facts about Bachmann-Howard fixed points: First, the parallel between (\ref{eq:gap-seq-alternative}) and~(\ref{eq:gap-lin-alternative}) ensures that~$OT_n[0]$ linearizes~$\textsf{S}^0_n(1)\cong\overline{\mathbb S}_n[0]$. Secondly, iterated applications of the result from~\cite{freund-predicative-collapsing} show that~$OT_n[0]$ has order type $\omega_{2n-1}:=\omega_{2n-1}^1=\omega_{2n}^0$, for any $n>0$. In~\cite{MRW-linear} this fact was established by \mbox{explicit} computations that involve the addition-free Veblen functions, which may seem some\-what ad hoc. Let us recall that $\omega_{2n-1}$ is the maximal order type of the partial order~$\textsf{S}^0_n(1)\cong\overline{\mathbb S}_n[0]$, as shown in~\cite{MRW-linear} (based on results from~\cite{schuette-simpson}).

To summarize, the present paper introduces the general notion of Bachmann-Howard derivative. The latter allows us to give two systematic reconstructions of the gap condition on finite sequences. While the two constructions yield the same result in the case of partial orders, the versions for linear orders lead to two different systems of collapsing functions, both of which have been studied in~\cite{MRW-linear}. The first construction does not realize the maximal order type (in the case of sequences) but seems to be of greater general interest, since it is readily extended from sequences to trees (cf.~\cite{freund-kruskal-gap}). The second construction exploits a property that is specific to sequences, and it realizes the maximal order type.

\section{Theory, part~1: Bachmann-Howard derivatives}\label{sect:BH-derivatives}

In this section we define the notion of Bachmann-Howard derivative, by making the informal explanation from the introduction precise. We then give a proof of existence and uniqueness, including a criterion that is useful for applications.

To recall the definition of dilators, we need some terminology: Write $[X]^{<\omega}$ for the set of finite subsets of a set~$X$. Each $f:X\to Y$ induces a function
\begin{equation*}
[f]^{<\omega}:[X]^{<\omega}\to[Y]^{<\omega}\quad\text{with}\quad [f]^{<\omega}(a)=\{f(x)\,|\,x\in a\}.
\end{equation*}
This yields an endofunctor~$[\cdot]^{<\omega}$ on the category of sets. Given~$a\in[X]^{<\omega}$, we will write $\iota_a:a\hookrightarrow X$ for the inclusion map, provided that~$X$ is clear from the context. Let $\mathsf{LO}$ be the category of linear orders and order embeddings. We will omit the forgetful functor from orders to sets (and thus apply $[\cdot]^{<\omega}$ to orders). Conversely, a subset of an order will often be considered as a suborder. Finally, let us agree that $\rng(f)$ denotes the range (in the sense of image) of~$f$.

\begin{definition}\label{def:lo-dilator}
An $\lo$-dilator consists of a functor $D:\lo\to\lo$ and a natural transformation $\supp:D\To[\cdot]^{<\omega}$ such that the so-called support condition
\begin{equation*}
\supp_Y(\sigma)\subseteq\rng(f)\quad\Rightarrow\quad\sigma\in\rng(D(f))
\end{equation*}
holds for any $\lo$-morphism $f:X\to Y$ and any $\sigma\in D(Y)$. If, in addition, $D(X)$ is well founded for any well order~$X$, then~$(D,\supp^D)$ is called a $\wo$-dilator.
\end{definition}

Note that the converse of the implication in the definition is automatic since~$\supp$ is natural. There is at most one natural transformation $\supp:D\To[\cdot]^{<\omega}$ that satisfies the support condition, since $\supp_X(\sigma)$ is determined as the minimal $a\subseteq X$ with $\sigma\in\rng(D(\iota_a))$. Furthermore, such a natural transformation exists if, any only if, $D$ preserves pullbacks and direct limits, as verified in~\cite[Remark~2.2.2]{freund-thesis}. This means that our definition of \mbox{$\wo$-dilators} coincides with Girard's definition of dilators~\cite{girard-pi2}. We have added the prefix $\wo$ for clarity, since we will later consider variants of dilators on partial orders.

As supports are uniquely determined, we often write $D$ instead of $(D,\supp)$. Sometimes (but not always) we then write $\supp^D$ to refer to~$\supp$. Let us write
\begin{equation*}
\sigma\nf D(\iota_a)(\sigma_0)\quad\text{with $a\in[X]^{<\omega}$ and $\sigma_0\in D(a)$}
\end{equation*}
if the equality holds and we have~$\supp_a(\sigma_0)=a$. This notation allows us to formulate a version of Girard's normal form theorem:

\begin{lemma}\label{lem:normal-forms}
Consider an $\lo$-dilator~$D$ and a linear order~$X$. Each $\sigma\in D(X)$ has a unique normal form $\sigma\nf D(\iota_a)(\sigma_0)$. The latter satisfies $a=\supp_X(\sigma)$.
\end{lemma}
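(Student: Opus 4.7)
The plan is to use the fact already remarked after Definition~\ref{def:lo-dilator}: namely, that $\supp_X(\sigma)$ is characterized as the minimal $a \in [X]^{<\omega}$ such that $\sigma \in \rng(D(\iota_a))$, together with naturality of $\supp$. Let me separate existence, the claim about $a$, and uniqueness.

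For existence, set $a := \supp_X(\sigma)$. Since trivially $\supp_X(\sigma) \subseteq a = \rng(\iota_a)$, the support condition in Definition~\ref{def:lo-dilator} yields some $\sigma_0 \in D(a)$ with $\sigma = D(\iota_a)(\sigma_0)$. I now verify $\supp_a(\sigma_0) = a$, which is needed for this to be a normal form. By naturality of $\supp$, we have
\begin{equation*}
\supp_X(\sigma) = \supp_X(D(\iota_a)(\sigma_0)) = [\iota_a]^{<\omega}(\supp_a(\sigma_0)) = \supp_a(\sigma_0),
\end{equation*}
where the last equality uses that $\iota_a$ is the inclusion (so $[\iota_a]^{<\omega}$ acts as the identity on subsets of $a$). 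Thus $\supp_a(\sigma_0) = \supp_X(\sigma) = a$, confirming the side condition of the normal form and also the last sentence of the lemma.

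For uniqueness, suppose $\sigma \nf D(\iota_b)(\tau_0)$ is another normal form, with $b \in [X]^{<\omega}$ and $\supp_b(\tau_0) = b$. Applying naturality of $\supp$ once more,
\begin{equation*}
\supp_X(\sigma) = \supp_X(D(\iota_b)(\tau_0)) = [\iota_b]^{<\omega}(\supp_b(\tau_0)) = b.
\end{equation*}
Hence $b = \supp_X(\sigma) = a$. Finally, since $\iota_a$ is an $\lo$-morphism, $D(\iota_a)$ is itself an $\lo$-embedding and in particular injective, so $D(\iota_a)(\sigma_0) = \sigma = D(\iota_a)(\tau_0)$ forces $\sigma_0 = \tau_0$.

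There is no real obstacle here: the result is essentially a direct unpacking of the support condition, the naturality square for $\supp$, and the injectivity of $D$ on embeddings. The only small subtlety is remembering that $[\iota_a]^{<\omega}$ restricts to the identity on $[a]^{<\omega}$, which is what lets the computation $\supp_X(\sigma) = \supp_a(\sigma_0)$ go through without further bookkeeping.
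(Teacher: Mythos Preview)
Your proof is correct and follows essentially the same approach as the paper: naturality of $\supp$ gives $\supp_X(\sigma)=\supp_a(\sigma_0)$ whenever $\sigma=D(\iota_a)(\sigma_0)$, the support condition yields existence with $a=\supp_X(\sigma)$, and uniqueness follows since $a$ is determined by $\sigma$ and $D(\iota_a)$ is injective. Your write-up is simply more explicit than the paper's terse version.
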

\begin{proof}
For $\sigma=D(\iota_a)(\sigma_0)$ we have $\supp_X(\sigma)=\supp_a(\sigma_0)$ by naturality. Now the existence of a normal form with $a=\supp_X(\sigma)$ follows from the support condition. Uniqueness holds as $\sigma$ determines~$a$ and as the embedding $D(\iota_a)$ is injective.
\end{proof}

As observed by Girard, the normal form theorem entails that dilators are determined (up to natural isomorphism) by their restrictions to the category of finite linear orders. Since this category is essentially small, these restrictions can be represented by sets (rather than proper classes). A formalization in second order arithmetic is available for the countable case (see e.\,g.~\cite{freund-computable} for details). In the present paper we do not work in a specific base theory.

To define the Bachmann-Howard derivative $\vartheta D$ of an $\lo$-dilator~$D$ we must, in particular, specify a transformation $X\mapsto\vartheta D(X)$ of linear orders. The following definition constructs the required orders from syntactic material. A more abstract characterization, which can be easier to handle in applications, will be given later. We point out that the orders~$\vartheta D(X)$ are relativized versions of the Bachmann-Howard fixed points that we have constructed in~\cite{freund-computable}.

\begin{definition}\label{def:fixed-point-syntactic}
Let us consider an $\lo$-dilator~$D$. For each linear order~$X$, we define a set $\vartheta D(X)$ of terms and a binary relation~$<_{\vartheta D(X)}$ on that set by simultaneous recursion. The set $\vartheta D(X)$ is generated by the following clauses:
\begin{enumerate}[label=(\roman*)]
\item For each element $x\in X$ we have a term~$\overline x\in\vartheta D(X)$.
\item Given a finite set $a\subseteq\vartheta D(X)$ that is linearly ordered by~$<_{\vartheta D(X)}$, we add a term $\vartheta\langle a,\sigma\rangle$ for each element~$\sigma\in D(a)$ with $\supp^D_a(\sigma)=a$.
\end{enumerate}
For $s,t\in\vartheta D(X)$ we stipulate that $s<_{\vartheta D(X)}t$ holds if, and only if, one of the following clauses applies:
\begin{enumerate}[label=(\roman*')]
\item We have $s=\overline x$ and $t=\overline y$ with $x<_X y$.
\item The term $s$ is of the form~$\overline x$ while $t$ is of the form $\vartheta\ang{b,\tau}$.
\item We have $s=\vartheta\ang{a,\sigma}$ and $t=\vartheta\ang{b,\tau}$, the restriction of $<_{\vartheta D(X)}$ to $a\cup b$ is a linear order, and one of the following holds:
\begin{itemize}[label={--}]
\item We have $D(\iota_a)(\sigma)<_{D(a\cup b)}D(\iota_b)(\tau)$ for the inclusions $\iota_a:a\hookrightarrow a\cup b$ and $\iota_b:b\hookrightarrow a\cup b$. Furthermore, we have $s'<_{\vartheta D(X)}t$ for all $s'\in a$.
\item We have $s\leq_{\vartheta D(X)}t'$ for some~$t'\in b$ (i.\,e., we have $s<_{\vartheta D(X)}t'$ or $s$~and~$t'$ are the same term).
\end{itemize}
\end{enumerate}
\end{definition}

To justify the recursion in detail, one can argue as follows: In a first step, ignore the reference to $<_{\vartheta D(X)}$ in order to generate a larger set $\vartheta_0D(X)\supseteq\vartheta D(X)$. More precisely, declare that $\vartheta_0D(X)$ contains $\vartheta\ang{a,\sigma}$ whenever $a\subseteq\vartheta_0D(X)$ is finite and $\sigma\in D(a)$ holds with respect to \emph{some} linear order on~$a$. Then define a length function $l:\vartheta_0D(X)\to\mathbb N$ by recursion over terms, stipulating
\begin{equation*}
l(\overline x)=0\quad\text{and}\quad l(\vartheta\ang{a,\sigma})=1+\sum_{r\in a}2\cdot l(r).
\end{equation*}
Finally, decide $r\in\vartheta D(X)$ and $s<_{\vartheta D(X)}t$ by simultaneous recursion on $l(r)$ and $l(s)+l(t)$, respectively. For example, we can decide $r\in\vartheta D(X)$ for $r=\vartheta\ang{a,\sigma}$ as follows: Recursively decide~$a\subseteq\vartheta D(X)$ and compute the restriction of~$<_{\vartheta D(X)}$ to~$a$. If the latter is a linear order, then check $\sigma\in D(a)$ and $\supp^D_a(\sigma)=a$ with respect to it. At various places in Definition~\ref{def:fixed-point-syntactic}, we have required that certain restrictions of $<_{\vartheta D(X)}$ are linear. The purpose was to ensure that~$D:\lo\to\lo$ is only applied to (morphisms of) linear orders. \emph{Ex post}, linearity is automatic by the following lemma, which is proved as in the non-relativized case (see~\cite[Proposition~4.1]{freund-computable}).

\begin{proposition}
The relation $<_{\vartheta D(X)}$ is a linear order on~$\vartheta D(X)$.
\end{proposition}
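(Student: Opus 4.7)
The plan is to follow the template of \cite[Proposition~4.1]{freund-computable}, using the length function $l$ introduced just before the statement to drive all inductions. The only genuinely new ingredient in the relativized setting is the handling of the atomic terms $\overline x$: linearity among them reduces to linearity of $X$ via clause~(i'), and they sit below every $\vartheta$-term by clause~(ii'), so these cases are dispatched immediately wherever they arise.

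First I would record a \emph{subterm lemma}: if $r=\vartheta\ang{a,\sigma}\in\vartheta D(X)$ and $s'\in a$, then $s'<_{\vartheta D(X)}r$. For $s'=\overline x$ this is~(ii'); for $s'=\vartheta\ang{c,\rho}$ one applies the second bullet of~(iii') with $t':=s'$, witnessed by the trivial identity $s'=s'$ giving $s'\le_{\vartheta D(X)}s'$. Trichotomy is then proved by induction on $l(s)+l(t)$. Only the case $s=\vartheta\ang{a,\sigma}$, $t=\vartheta\ang{b,\tau}$ is non-routine; here the induction hypothesis yields linearity of $<_{\vartheta D(X)}$ on $a\cup b$ (each comparison there has $l$-sum strictly less than $l(s)+l(t)$), so $D(a\cup b)$ is a well-defined linear order and the two elements $D(\iota_a)(\sigma)$, $D(\iota_b)(\tau)$ admit trichotomy there. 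A case split on that trichotomy, combined with the subterm lemma to verify the ``all $s'\in a$'' side condition of the first bullet of~(iii'), delivers exactly one of $s<t$, $s=t$, $t<s$.

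Transitivity is the main obstacle, and I would prove it by induction on $l(r)+l(s)+l(t)$. The only substantive case is $r=\vartheta\ang{a,\rho}$, $s=\vartheta\ang{b,\sigma}$, $t=\vartheta\ang{c,\tau}$, where $r<s$ and $s<t$ each arise from one of the two bullets of~(iii'), producing four subcases. In the first/first subcase one transfers $D(\iota_a^{a\cup b})(\rho)<_{D(a\cup b)}D(\iota_b^{a\cup b})(\sigma)$ and $D(\iota_b^{b\cup c})(\sigma)<_{D(b\cup c)}D(\iota_c^{b\cup c})(\tau)$ into the common linear order $D(a\cup b\cup c)$ via functoriality, chains them by transitivity there, and then uses that the embedding $D(a\cup c)\hookrightarrow D(a\cup b\cup c)$ reflects order to recover $D(\iota_a^{a\cup c})(\rho)<_{D(a\cup c)}D(\iota_c^{a\cup c})(\tau)$; the ``all $r'\in a$'' clause follows from the inductive transitivity since $r'<s$ and $s<t$. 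The three remaining subcases are handled by combining $r\le s'$ or $s\le t'$ with the subterm lemma, the inductive transitivity, and a case split on whether $\le$ is equality. The real care is in checking that the two kinds of dominance side conditions compose correctly across chained strict inequalities; once this bookkeeping is verified, the argument is a routine adaptation of its unrelativized counterpart.
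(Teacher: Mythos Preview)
Your approach is exactly the paper's own---both defer to \cite[Proposition~4.1]{freund-computable} and note that the only new feature, the atomic terms~$\overline x$, is absorbed by clauses~(i') and~(ii'). One organisational point in your sketch does need repair, however. Clause~(iii') requires the restriction of $<_{\vartheta D(X)}$ to the relevant finite set ($c\cup a$ for your subterm lemma, $a\cup b$ for trichotomy, $a\cup b\cup c$ for transitivity) to be a genuine \emph{linear order}, since $D$ is only a functor on~$\lo$. So when you write ``the induction hypothesis yields linearity of $<_{\vartheta D(X)}$ on $a\cup b$'' inside the trichotomy argument, that hypothesis must already supply irreflexivity and transitivity on~$a\cup b$---properties you only establish \emph{after} trichotomy in your sequential layout. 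Likewise, your direct proof of the subterm lemma via the second bullet of~(iii') silently assumes linearity of~$c\cup a$, which is not yet available. The three pieces therefore do not stand alone as separate inductions; they must be run as a single simultaneous induction (for instance on the maximal $l$-value occurring among the terms under consideration), so that the full linearity of proper sub-configurations is part of the hypothesis at each step. That is presumably how the cited reference is organised, and once the inductions are interleaved your blueprint goes through.
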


The syntactic construction of~$\vartheta D(X)$ can be hard to apply, even in simple cases. For this reason, we now develop a more abstract characterization.

\begin{definition}\label{def:BH-fp}
Consider an $\lo$-dilator~$D$ and a linear order~$X$. A Bachmann-Howard fixed point of~$D$ over~$X$ consists of a linear order~$Z$ and functions $\iota:X\to Z$ and $\vartheta:D(Z)\to Z$ with the following properties:
\begin{enumerate}[label=(\roman*)]
\item The function $\iota:X\to Z$ is an order embedding.
\item We have $\iota(x)<_Z\vartheta(\sigma)$ for all $x\in X$ and $\sigma\in D(Z)$.
\item If we have $\sigma<_{D(Z)}\tau$ as well as $z<_Z\vartheta(\tau)$ for all $z\in\supp^D_Z(\sigma)$, then we have $\vartheta(\sigma)<_Z\vartheta(\tau)$. Furthermore, $z<_Z\vartheta(\sigma)$ holds for any $z\in\supp^D_Z(\sigma)$.
\end{enumerate}
\end{definition}

Let us extend the syntactic orders $\vartheta D(X)$ into Bachmann-Howard fixed points:

\begin{definition}\label{def:construct-BH-fixed-point}
To define $\iota_X:X\to\vartheta D(X)$ and $\vartheta_X:D(\vartheta D(X))\to\vartheta D(X)$, we stipulate $\iota_X(x)=\overline x$ and~$\vartheta_X(\sigma)=\vartheta\ang{a,\sigma_0}$ for $\sigma\nf D(\iota_a)(\sigma_0)$.
\end{definition}

As expected, we obtain the following:

\begin{proposition}\label{prop:BH-fixed-point-exists}
The tuple $(\vartheta D(X),\iota_X,\vartheta_X)$ explained by Definitions~\ref{def:fixed-point-syntactic} and~\ref{def:construct-BH-fixed-point} is a Bachmann-Howard fixed point of~$D$ over~$X$.
\end{proposition}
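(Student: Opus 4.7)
The plan is to verify the three clauses of Definition~\ref{def:BH-fp} separately, each by a direct appeal to the matching clause of the syntactic definition of~$<_{\vartheta D(X)}$ in Definition~\ref{def:fixed-point-syntactic}. Write $Z=\vartheta D(X)$ throughout, and recall from Lemma~\ref{lem:normal-forms} that every $\sigma\in D(Z)$ has a unique normal form $\sigma\nf D(\iota_a^Z)(\sigma_0)$ with $a=\supp^D_Z(\sigma)$, so that $\vartheta_X(\sigma)=\vartheta\langle a,\sigma_0\rangle$ by Definition~\ref{def:construct-BH-fixed-point}.

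Condition~(i) is immediate from clause~(i'): $\overline x<_Z\overline y$ holds if and only if $x<_X y$. Condition~(ii) follows from clause~(ii'), since every value $\vartheta_X(\sigma)$ has by construction the form $\vartheta\langle a,\sigma_0\rangle$.

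For~(iii), I would first handle the support clause. Given $z\in a=\supp^D_Z(\sigma)$ I need $z<_Z\vartheta\langle a,\sigma_0\rangle$. If $z=\overline x$ this is clause~(ii'). If instead $z=\vartheta\langle c,\rho\rangle$, then clause~(iii') applied with $s=t=\vartheta\langle c,\rho\rangle$ versus $\vartheta\langle a,\sigma_0\rangle$ and the second bullet's witness $t':=z\in a$ gives $z\leq_Z z$ and hence $z<_Z\vartheta\langle a,\sigma_0\rangle$.

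For the monotonicity statement, assume $\sigma<_{D(Z)}\tau$ and $z<_Z\vartheta_X(\tau)$ for every $z\in a$. Write $\tau\nf D(\iota_b^Z)(\tau_0)$ and denote by $\iota_a:a\hookrightarrow a\cup b$ and $\iota_b:b\hookrightarrow a\cup b$ the inclusions as in clause~(iii'); the restriction of $<_Z$ to $a\cup b$ is linear by Proposition~2.5. Let $\iota^Z_{a\cup b}:a\cup b\hookrightarrow Z$ be the inclusion. Functoriality of $D$ gives
\begin{equation*}
\sigma=D(\iota_{a\cup b}^Z)\bigl(D(\iota_a)(\sigma_0)\bigr),\qquad\tau=D(\iota_{a\cup b}^Z)\bigl(D(\iota_b)(\tau_0)\bigr),
\end{equation*}
and since $D(\iota_{a\cup b}^Z)$ is an order embedding we transfer $\sigma<_{D(Z)}\tau$ to $D(\iota_a)(\sigma_0)<_{D(a\cup b)}D(\iota_b)(\tau_0)$. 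Combined with the hypothesis on elements of~$a$, this is precisely the first bullet of clause~(iii'), so $\vartheta\langle a,\sigma_0\rangle<_Z\vartheta\langle b,\tau_0\rangle$, as required.

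No step is a genuine obstacle; the whole proof is a matter of matching normal-form data to the syntactic clauses. The only point demanding care is the functoriality argument that converts $\sigma<_{D(Z)}\tau$ into a comparison over the common support $a\cup b$, since the two normal forms live in different instances of~$D$ and must be pulled back along $\iota^Z_{a\cup b}$ before the clause~(iii') bullet applies.
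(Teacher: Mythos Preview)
Your proof is correct and follows the same approach as the paper: conditions~(i) and~(ii) are immediate from clauses~(i') and~(ii'), and condition~(iii) is reduced to clause~(iii') via the normal-form decomposition. The paper is terser (handling only the new case $s=\overline x$ and referring to the non-relativized argument in~\cite{freund-computable}), whereas you give a self-contained verification; note that your phrase ``$s=t=\vartheta\langle c,\rho\rangle$'' appears to be a slip for ``$s=z=\vartheta\langle c,\rho\rangle$ and $t=\vartheta\langle a,\sigma_0\rangle$'', but the intended argument is clear and correct.
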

\begin{proof}
It is immediate that conditions~(i) and~(ii) from Definition~\ref{def:BH-fp} are satisfied. Concerning condition~(iii), we point out that $s<_{\vartheta D(X)}\vartheta_X(\sigma)$ is immediate for a terms of the form $s=\overline x$, by clause~(ii') of Definition~\ref{def:fixed-point-syntactic}. The remaining conditions are verified as in the non-relativized case, for which we refer to~\cite[Theorem~4.1]{freund-computable}.
\end{proof}

In the non-relativized case, \cite[Theorem~4.2]{freund-computable} shows that $\vartheta D(0)$ can be embedded into any Bachmann-Howard fixed point of~$D$ over the empty set. Here we establish a stronger categorical property, which will be useful below (see~\cite[Section~3]{freund-kruskal-gap} for parallel constructions in the context of partial orders).

\begin{definition}\label{def:initial-fixed-point}
A Bachmann-Howard fixed point $(Z,\iota,\vartheta)$ of $D$ over~$X$ is called initial if any Bachmann-Howard fixed point $(Z',\iota',\vartheta')$ of~$D$ over~$X$ admits a unique order embedding $f:Z\to Z'$ such that
\begin{equation*}
\begin{tikzcd}[row sep=tiny,column sep=large]
& Z\ar[dd,"f"] & D(Z)\ar[l,"\vartheta",swap]\ar[dd,"D(f)"]\\
X \ar[ru,"\iota"]\ar[rd,"{\iota'}",swap] & & \\
& Z' & D(Z')\ar[l,"{\vartheta'}"]
\end{tikzcd}
\end{equation*}
is a commutative diagram.
\end{definition}

By the usual categorical argument, initial Bachmann-Howard fixed points are unique up to isomorphism. The following yields existence and a useful criterion.

\begin{theorem}\label{thm:initial-characterize}
Consider an $\lo$-dilator~$D$ and a linear order~$X$. For any Bachmann-Howard fixed point $(Z,\iota,\vartheta)$ of~$D$ over~$X$, the following are equivalent:
\begin{enumerate}[label=(\roman*)]
\item We have $Z=\rng(\iota)\cup\rng(\vartheta)$, and there is a function $h:Z\to\mathbb N$ such that $z\in\supp^D_Z(\sigma)$ entails $h(z)<h(\vartheta(\sigma))$, for any $\sigma\in D(Z)$.
\item The Bachmann-Howard fixed point $(Z,\iota,\vartheta)$ is initial.
\end{enumerate}
Furthermore, the Bachmann-Howard fixed point from Proposition~\ref{prop:BH-fixed-point-exists} is initial.
\end{theorem}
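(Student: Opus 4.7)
The plan is to prove (i) $\To$ (ii) first, verify that the syntactic fixed point of Proposition~\ref{prop:BH-fixed-point-exists} satisfies (i), and then deduce (ii) $\To$ (i) from the uniqueness of initial objects. Two consequences of Definition~\ref{def:BH-fp} will set up the recursion cleanly. From clause (ii), the images $\rng(\iota)$ and $\rng(\vartheta)$ are disjoint. From clause (iii), $\vartheta$ is injective: if $\sigma <_{D(Z)} \tau$ satisfies $\vartheta(\sigma) = \vartheta(\tau)$, then the ``Furthermore'' part gives $\supp^D_Z(\sigma) \subseteq \{z : z <_Z \vartheta(\tau)\}$, so (iii) produces the contradiction $\vartheta(\sigma) <_Z \vartheta(\tau)$. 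Using trichotomy in $Z$, the sufficient condition in (iii) upgrades to a biconditional characterization: $\vartheta(\sigma) <_Z \vartheta(\tau)$ holds iff either $\sigma <_{D(Z)} \tau$ with $\supp^D_Z(\sigma) \subseteq \{z : z <_Z \vartheta(\tau)\}$, or $\vartheta(\sigma) \leq_Z t'$ for some $t' \in \supp^D_Z(\tau)$, mirroring clause (iii$'$) of Definition~\ref{def:fixed-point-syntactic} exactly.

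For (i) $\To$ (ii), given another Bachmann-Howard fixed point $(Z', \iota', \vartheta')$, I define $f : Z \to Z'$ by recursion on $h$, setting $f(\iota(x)) = \iota'(x)$ and $f(\vartheta(\sigma)) = \vartheta'(D(f)(\sigma))$. The decomposition $Z = \rng(\iota) \sqcup \rng(\vartheta)$ together with injectivity of both maps makes these clauses unambiguous; writing $\sigma \nf D(\iota_a)(\sigma_0)$, the value $D(f)(\sigma)$ depends only on $f|_a$ by the support condition and naturality, while $h(z') < h(\vartheta(\sigma))$ for $z' \in a = \supp^D_Z(\sigma)$ ensures well-foundedness. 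Commutativity of the diagram in Definition~\ref{def:initial-fixed-point} is then built in, and uniqueness of $f$ follows by the same recursion. To see $f$ is an order embedding it suffices, by linearity, to prove order preservation, which I do by induction on $h(z_1) + h(z_2)$. The cases where at least one $z_i$ lies in $\rng(\iota)$ reduce to clauses (i) and (ii) of Definition~\ref{def:BH-fp} in $Z'$. For $z_1 = \vartheta(\sigma), z_2 = \vartheta(\tau)$ with $\vartheta(\sigma) <_Z \vartheta(\tau)$, the biconditional above provides two cases; the induction hypothesis on $\supp^D_Z(\sigma) \cup \supp^D_Z(\tau)$, the functoriality of $D$ (which preserves strict order on embeddings), and the naturality identity $\supp^D_{Z'}(D(f)(\sigma)) = f(\supp^D_Z(\sigma))$ translate each case verbatim into $Z'$, and the forward direction of (iii) in $Z'$ (or its ``Furthermore'' clause) yields $\vartheta'(D(f)(\sigma)) <_{Z'} \vartheta'(D(f)(\tau))$.

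The syntactic fixed point satisfies (i) by inspection of Definition~\ref{def:fixed-point-syntactic}: every term is either $\overline x = \iota_X(x)$ or $\vartheta\ang{a, \sigma_0} = \vartheta_X(D(\iota_a)(\sigma_0))$, and the length $l$ sketched after that definition serves as $h$, since $l(\vartheta\ang{a,\sigma_0}) = 1 + \sum_{r \in a} 2 l(r) > l(r)$ for every $r \in a$, while naturality of $\supp$ gives $\supp^D_{\vartheta D(X)}(D(\iota_a)(\sigma_0)) = a$. Hence, by the implication already proved, the syntactic fixed point is initial. The remaining direction (ii) $\To$ (i) is the standard categorical fact that any two initial objects are canonically isomorphic, so (i) transfers from the syntactic fixed point to any initial $(Z, \iota, \vartheta)$. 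I expect the main obstacle to be the $\vartheta$-$\vartheta$ case in the second paragraph: it is what forces the preliminary biconditional strengthening of Definition~\ref{def:BH-fp}(iii) and requires careful bookkeeping with the naturality of $\supp$ and the functoriality of $D$, whereas everything else amounts to a straightforward induction or a standard categorical argument.
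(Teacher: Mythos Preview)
Your approach matches the paper's: prove (i)~$\Rightarrow$~(ii) by recursively building~$f$, check that the syntactic fixed point satisfies~(i), and obtain (ii)~$\Rightarrow$~(i) from uniqueness of initial objects. The biconditional you extract from clause~(iii) is exactly the case split the paper performs inline.

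There is one technical point where the argument does not go through as written. You define $f$ by recursion on~$h$ and then prove order preservation separately by induction on $h(z_1)+h(z_2)$. But already to \emph{define} $f(\vartheta(\sigma))=\vartheta'(D(f\!\restriction\!a)(\sigma_0))$ you need $f\!\restriction\!a$ to be an order embedding, since $D$ is a functor on~$\lo$ rather than on sets; so definedness and order preservation must be established simultaneously. Even granting simultaneity, the quantity $h(z_1)+h(z_2)$ does not carry the induction: in the $\vartheta$--$\vartheta$ case you need $f\!\restriction\!(a\cup b)$ to be an embedding in order to transport the inequality $D(\iota'_a)(\sigma_0)<D(\iota'_b)(\tau_0)$ via functoriality, and this requires the inductive hypothesis for pairs $z',z''\in a$; but $h(z')+h(z'')\le 2h(\vartheta(\sigma))-2$ can exceed $h(z_0)+h(z_1)$ when $h(z_0)$ is much larger than $h(z_1)$. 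The paper resolves this by introducing an auxiliary length $l$ with $l(\vartheta(\sigma))=1+\sum_{z\in\supp^D_Z(\sigma)}2\cdot l(z)$ (itself defined by recursion on~$h$), so that any two $z',z''\in\supp^D_Z(\sigma)$ satisfy $l(z')+l(z'')<l(\vartheta(\sigma))$, and then runs the simultaneous induction on $l(z)$ and $l(z_0)+l(z_1)$. This is a local repair; the rest of your argument stands.
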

\begin{proof}
We first assume~(i) and derive~(ii). Aiming at the latter, we consider an arbitrary Bachmann-Howard fixed point~$(Z',\iota',\vartheta')$ of~$D$ over~$X$. The diagram from Definition~\ref{def:initial-fixed-point} commutes if, and only if, we have
\begin{align*}
f(\iota(x))&=\iota'(x) && \text{for all }x\in X,\\
f(\vartheta(\sigma))&=\vartheta'(D(f\circ\iota_a)(\sigma_0)) && \text{for any }\sigma\nf D(\iota_a)(\sigma_0)\in D(Z).
\end{align*}
The idea is to read these equations as a recursive definition of~$f$, which is possible in view of the following observations: First, $\iota$ and $\vartheta$ are injective by clauses~(i) and~(iii) of Definition~\ref{def:BH-fp} (see also~\cite[Lemma~2.1.7]{freund-thesis}). Secondly, the union $Z=\rng(\iota)\cup\rng(\vartheta)$ is necessarily disjoint, by clause~(ii) of the same definition. Finally, the composition $f\circ\iota_a$ does only depend on values of $f$ on arguments $z\in a=\supp^D_Z(\sigma)$, for which a recursive call is justified due to $h(z)<h(\vartheta(\sigma))$. In view of these observations, there can be at most one embedding~$f:Z\to Z'$ with the required properties. To show that there is one, we first define a length function $l:Z\to\mathbb N$ by the recursive clauses $l(\iota(x))=0$ and $l(\vartheta(\sigma))=1+\sum_{z\in\supp^D_Z(\sigma)}2\cdot l(z)$, which are justified as above. By simultaneous induction on $l(z)$ and $l(z_0)+l(z_1)$ one can now check that $f(z)\in Z'$ is defined and that $z_0<_Zz_1$ entails $f(z_0)<_{Z'}f(z_1)$. The motivation for the simultaneous verification is that $f\circ\iota_a$ needs to be an embedding for $D(f\circ\iota_a)$ to be defined. The only interesting case in the induction concerns an inequality
\begin{equation*}
z_0=\vartheta(\sigma)<_Z\vartheta(\tau)=z_1.
\end{equation*}
Crucially, we must have $z<_Z\vartheta(\tau)$ for all elements $z\in\supp^D_X(\sigma)$, since $\vartheta(\tau)\leq_Z z$ would entail $\vartheta(\tau)<_Z\vartheta(\sigma)$, by clause~(iii) of Definition~\ref{def:BH-fp}. If $\sigma<_{D(Z)}\tau$ fails, we get $\vartheta(\sigma)\leq_Z z'$ for some $z'\in\supp^D_Z(\tau)$, for the same reason.  Writing $\sigma\nf D(\iota_a)(\sigma_0)$ and $\tau\nf D(\iota_b)(\tau_0)$, we can use the induction hypothesis to show that one of the following must hold (write $f\circ\iota_a=f\!\restriction\!(a\cup b)\circ\iota_a'$ with $\iota_a':a\hookrightarrow a\cup b$):
\begin{itemize}[label={--},leftmargin=1.5em]
\item We have $D(f\circ\iota_a)(\sigma_0)<_{D(Z')} D(f\circ\iota_b)(\tau_0)$ and $z<_{Z'}f(\vartheta(\tau))=\vartheta'(D(f\circ\iota_b)(\tau_0))$ for all $z\in[f]^{<\omega}(\supp^D_Z(\sigma))=\supp^D_{Z'}(D(f\circ\iota_a)(\sigma_0))$.
\item We have $\vartheta'(D(f\circ\iota_a)(\sigma_0))\leq_{Z'}z'$ for some $z'\in\supp^D_{Z'}(D(f\circ\iota_b)(\tau_0))$.
\end{itemize}
In either case, clause~(iii) of Definition~\ref{def:BH-fp} yields
\begin{equation*}
f(z_0)=\vartheta'(D(f\circ\iota_a)(\sigma_0))<_{Z'}\vartheta'(D(f\circ\iota_b)(\tau_0))=f(z_1).
\end{equation*}
Next, we use the criterion provided by~(i) to show that the Bachmann-Howard fixed point $(\vartheta D(X),\iota_X,\vartheta_X)$ from Proposition~\ref{prop:BH-fixed-point-exists} is initial. To see that we have
\begin{equation*}
\vartheta D(X)=\rng(\iota_X)\cup\rng(\vartheta_X),
\end{equation*}
it suffices to observe that the same condition $\supp^D_a(\sigma)=a$ appears in clause~(ii) of Definition~\ref{def:fixed-point-syntactic} and in the definition of normal forms (as given in the paragraph before Lemma~\ref{lem:normal-forms}). Let us now define $h:\vartheta D(X)\to\mathbb N$ by recursion over terms, setting $h(\overline x)=0$ and $h(\vartheta\ang{a,\sigma_0})=1+\max\{h(s)\,|\,s\in a\}$. To see that this function has the required property, it suffices to recall that $\sigma\nf D(\iota_a)(\sigma_0)\in D(\vartheta D(X))$ entails~$\supp^D_{\vartheta D(X)}(\sigma)=a$, by Lemma~\ref{lem:normal-forms}. Finally, it is now easy to conclude that (ii) implies~(i), because any initial Bachmann-Howard fixed point must be isomorphic to~$(\vartheta D(X),\iota_X,\vartheta_X)$ (see the proof of~\cite[Theorem~3.5]{freund-kruskal-gap} for details).
\end{proof}

Now that we have established existence and uniqueness up to isomorphism, we will sometimes speak of `the' initial Bachmann-Howard fixed point of $D$ over~$X$ and denote `it' by~$\vartheta D(X)$ (i.\,e., this notation is no longer reserved for the specific term systems from Definition~\ref{def:fixed-point-syntactic}). In~\cite{freund-kruskal-gap} we have introduced a notion of Kruskal derivative for dilators of partial orders. We now define the corresponding notion in the context of linear orders.

\begin{definition}\label{def:BH-deriv}
A Bachmann-Howard derivative of an $\lo$-dilator~$D$ consists of an $\lo$-dilator~$\vartheta D$ and natural families of functions
\begin{equation*}
\iota_X:X\to\vartheta D(X)\quad\text{and}\quad\vartheta_X:D\circ\vartheta D(X)\to\vartheta D(X)
\end{equation*}
such that $(\vartheta D(X),\iota_X,\vartheta_X)$ is an initial Bachmann-Howard fixed point of~$D$ over~$X$, for each linear order~$X$.
\end{definition}

To avoid misunderstanding, we point out that the functions $\vartheta_X$ need not be order embeddings. Hence we do not have a natural transformation $\vartheta:D\circ\vartheta D\Rightarrow\vartheta D$ between endofunctors of linear orders (but the naturality condition is the same).

\begin{proposition}\label{prop:extend-into-derivative}
Assume that, for each linear order~$X$, we are given an initial Bachmann-Howard fixed point $(\vartheta D(X),\iota_X,\vartheta_X)$ of~$D$ over~$X$. There is a unique way to extend this data into a Bachmann-Howard derivative of~$D$.
\end{proposition}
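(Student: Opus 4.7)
The plan is to extract the functorial structure of $\vartheta D$ from the initial property. Given any morphism $f:X\to Y$ in $\lo$, I would first note that $(\vartheta D(Y),\iota_Y\circ f,\vartheta_Y)$ is itself a Bachmann-Howard fixed point of $D$ over $X$: clauses (i)--(iii) of Definition~\ref{def:BH-fp} for this tuple follow directly from the corresponding clauses for the fixed point over $Y$, using only that $f$ is an embedding. Initiality of $(\vartheta D(X),\iota_X,\vartheta_X)$ then supplies a unique embedding $\vartheta D(f):\vartheta D(X)\to\vartheta D(Y)$ satisfying
\begin{equation*}
\vartheta D(f)\circ\iota_X=\iota_Y\circ f\qquad\text{and}\qquad\vartheta D(f)\circ\vartheta_X=\vartheta_Y\circ D(\vartheta D(f)),
\end{equation*}
which is precisely the naturality required of $\iota$ and $\vartheta$. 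Functoriality ($\vartheta D(\mathrm{id})=\mathrm{id}$ and $\vartheta D(g\circ f)=\vartheta D(g)\circ\vartheta D(f)$) would follow from the uniqueness clause of initiality applied to the evident commuting diagrams, and the same uniqueness forces the asserted uniqueness of the entire extension.

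\textbf{Supports.} To promote $\vartheta D$ to an $\lo$-dilator, I would next equip it with a support natural transformation; recall that such a transformation, if it exists, is automatically unique. By Theorem~\ref{thm:initial-characterize}(i), the set $\vartheta D(X)$ decomposes disjointly as $\rng(\iota_X)\cup\rng(\vartheta_X)$ with $\iota_X$ and $\vartheta_X$ injective, and carries a height function $h_X$ witnessing well-foundedness of the recursion. These data justify defining $\supp_X:\vartheta D(X)\to[X]^{<\omega}$ by
\begin{equation*}
\supp_X(\iota_X(x))=\{x\},\qquad\supp_X(\vartheta_X(\sigma))=\bigcup\bigl\{\supp_X(z)\bigm|z\in\supp^D_{\vartheta D(X)}(\sigma)\bigr\}.
\end{equation*}
Naturality $[f]^{<\omega}\circ\supp_X=\supp_Y\circ\,\vartheta D(f)$ would then be verified by a routine induction on $h_X$, invoking the naturality of $\iota$, $\vartheta$, and $\supp^D$.

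\textbf{Support condition.} The main obstacle I anticipate is the support condition for $\vartheta D$: assuming $\supp_Y(w)\subseteq\rng(f)$, I must exhibit a preimage of $w$ under $\vartheta D(f)$. The plan is to induct on $h_Y(w)$. The case $w=\iota_Y(y)$ reduces immediately to $y\in\rng(f)$ combined with naturality of $\iota$. For $w=\vartheta_Y(\tau)$ with normal form $\tau\nf D(\iota_b)(\tau_0)$ and $b=\supp^D_{\vartheta D(Y)}(\tau)$, the definition of $\supp_Y$ forces $\supp_Y(z)\subseteq\rng(f)$ for every $z\in b$, so the inductive hypothesis produces a unique preimage in $\vartheta D(X)$ for each such $z$. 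Collecting these yields a set $a\subseteq\vartheta D(X)$ on which $\vartheta D(f)$ restricts to an order isomorphism $g:a\to b$. Transporting $\tau_0$ along the bijection $D(g):D(a)\to D(b)$ produces $\sigma_0\in D(a)$, and the factorization $\vartheta D(f)\circ\iota_a=\iota_b\circ g$ together with naturality of $\vartheta$ would then show that $\vartheta_X(D(\iota_a)(\sigma_0))$ maps to $w$ under $\vartheta D(f)$. The delicate point is the bookkeeping between the two inclusions $\iota_a$, $\iota_b$ and the normal-form presentation of $\tau$, so that the final computation closes up cleanly; once this is set up correctly, everything else is formal.
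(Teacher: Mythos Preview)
Your proposal is correct and follows essentially the same approach as the paper: obtain $\vartheta D(f)$ from initiality applied to the shifted fixed point $(\vartheta D(Y),\iota_Y\circ f,\vartheta_Y)$, define supports by the same recursive clauses justified via Theorem~\ref{thm:initial-characterize}(i), and verify naturality and the support condition by induction on the height function. If anything, you are more explicit than the paper, which dispatches the verification of naturality and the support condition in one sentence with a reference to the analogous partial-order argument in~\cite[Theorem~4.2]{freund-kruskal-gap}, whereas you spell out the inductive preimage construction for the support condition.
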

\begin{proof}
We first show that the given map $X\mapsto\vartheta D(X)$ can be uniquely extended into a functor. Given an embedding $f:X\to Y$ of linear orders, it is straightforward to check that $(\vartheta D(Y),\iota_Y\circ f,\vartheta_Y)$ is a Bachmann-Howard fixed point of~$D$ over~$X$. Since $(\vartheta D(X),\iota_X,\vartheta_X)$ is initial, there is a unique embedding~$\vartheta D(f):\vartheta D(X)\to\vartheta D(Y)$ such that the following is a commutative diagram:
\begin{equation*}
\begin{tikzcd}[row sep=tiny,column sep=large]
& \vartheta D(X)\ar[dd,"\vartheta D(f)"] & D(\vartheta D(X))\ar[l,"\vartheta_X",swap]\ar[dd,"D(\vartheta D(f))"]\\
X \ar[ru,"\iota_X"]\ar[rd,"{\iota_Y\circ f}",swap] & & \\
& \vartheta D(Y) & D(\vartheta D(Y))\ar[l,"{\vartheta_Y}"]
\end{tikzcd}
\end{equation*}
The very same diagram (with the triangle written as a square) must commute if the functions $\iota_X$ and $\vartheta_X$ are to be natural in~$X$. Hence there is a unique extension into a functor $\vartheta D:\lo\to\lo$ with the required properties. It remains to consider the extension into an~$\lo$-dilator. First observe that the required functions
\begin{equation*}
\supp^{\vartheta D}_X:\vartheta D(X)\to[X]^{<\omega}
\end{equation*}
are necessarily unique, as naturality and the implication from Definition~\ref{def:lo-dilator} require
\begin{equation*}
\supp^{\vartheta D}_X(\sigma)=\bigcap\{a\in[X]^{<\omega}\,|\,\sigma\in\rng(\vartheta D(\iota_a))\},
\end{equation*}
where $\iota_a:a\hookrightarrow X$ are the inclusions. For existence we use the characterization from part~(i) of Theorem~\ref{thm:initial-characterize}. It allows us to define supports by the recursive clauses
\begin{align*}
\supp^{\vartheta D}_X(\iota_X(x))&=\{x\},\\
\supp^{\vartheta D}_X(\vartheta_X(\sigma))&=\bigcup\{\supp^{\vartheta D}_X(s)\,|\,s\in\supp^D_{\vartheta D(X)}(\sigma)\}.
\end{align*}
Naturality and the implication from Definition~\ref{def:lo-dilator} are checked by induction. For~details we refer to the analogous argument for partial orders (see \cite[Theorem~4.2]{freund-kruskal-gap}).
\end{proof}

Together with the last sentence of Theorem~\ref{thm:initial-characterize}, we get existence:

\begin{corollary}
Any $\lo$-dilator has a Bachmann-Howard derivative.
\end{corollary}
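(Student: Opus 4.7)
The corollary is essentially a bookkeeping combination of the three results immediately preceding it, so the plan is very short. First I would, for each linear order $X$, invoke Proposition~\ref{prop:BH-fixed-point-exists} to produce the concrete tuple $(\vartheta D(X),\iota_X,\vartheta_X)$ of Definitions~\ref{def:fixed-point-syntactic} and~\ref{def:construct-BH-fixed-point} as a Bachmann-Howard fixed point of $D$ over~$X$. Then I would appeal to the final sentence of Theorem~\ref{thm:initial-characterize}, which asserts that precisely this fixed point is initial.

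At this point the hypothesis of Proposition~\ref{prop:extend-into-derivative} is satisfied: for every linear order $X$ we have an initial Bachmann-Howard fixed point of $D$ over $X$. Applying that proposition gives, uniquely, a functor structure on $X\mapsto\vartheta D(X)$ making $\iota_X$ and $\vartheta_X$ natural, together with the support functions turning $\vartheta D$ into an $\lo$-dilator. By Definition~\ref{def:BH-deriv} this is exactly a Bachmann-Howard derivative of~$D$.

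There is no real obstacle here; the work has been done in the three prior results. The only thing worth flagging is that one must be careful to quote the precise content of Proposition~\ref{prop:extend-into-derivative}, namely that it supplies both the functorial action and the support natural transformation, so that no additional verification is needed to establish the $\lo$-dilator structure on $\vartheta D$.
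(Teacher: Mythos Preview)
Your proposal is correct and matches the paper's approach exactly: the paper simply notes that the corollary follows from Proposition~\ref{prop:extend-into-derivative} together with the last sentence of Theorem~\ref{thm:initial-characterize}. There is nothing to add.
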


By an isomorphism between $\lo$-dilators $(D,\supp^D)$ and $(E,\supp^E)$ we simply mean a natural isomorphism $\eta:D\To E$ of functors. This is justified because the supports are automatically respected, i.\,e.~we have $\supp^E\circ\eta=\supp^D$. For an isomorphism this is particularly easy to see (cf.~the paragraph before~\cite[Theorem~4.4]{freund-kruskal-gap}). It is also true but more subtle when $\eta$ is merely a natural transformation (see \cite{girard-pi2}). We can now formulate the appropriate uniqueness result:

\begin{proposition}\label{prop:deriv-unique}
If $(\vartheta^0D,\iota^0,\vartheta^0)$ and $(\vartheta^1D,\iota^1,\vartheta^1)$ are two Bachmann-Howard derivatives of the same $\lo$-dilator~$D$, then there is a unique natural isomorphism $\eta:\vartheta^0D\To\vartheta^1D$ such that the diagram
\begin{equation*}
\begin{tikzcd}[row sep=tiny,column sep=large]
& \vartheta^0 D(X)\ar[dd,"\eta_X"] & D(\vartheta^0 D(X))\ar[l,"\vartheta^0_X",swap]\ar[dd,"D(\eta_X)"]\\
X \ar[ru,"\iota^0_X"]\ar[rd,"{\iota^1_X}",swap] & & \\
& \vartheta^1 D(X) & D(\vartheta^1 D(X))\ar[l,"{\vartheta^1_X}"]
\end{tikzcd}
\end{equation*}
commutes for every linear order~$X$.
\end{proposition}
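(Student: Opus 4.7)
The plan is to obtain each component $\eta_X$ from the two-sided initiality of the Bachmann-Howard fixed points, and then to deduce naturality by a diagram chase that again uses uniqueness in the initiality property.

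First I would fix a linear order $X$. By assumption $(\vartheta^1 D(X),\iota^1_X,\vartheta^1_X)$ is itself a Bachmann-Howard fixed point of $D$ over $X$, so the initiality of $(\vartheta^0 D(X),\iota^0_X,\vartheta^0_X)$ yields a unique order embedding $\eta_X:\vartheta^0 D(X)\to\vartheta^1 D(X)$ making the displayed triangle/square commute. Swapping the roles of the two derivatives produces an order embedding $\eta'_X:\vartheta^1 D(X)\to\vartheta^0 D(X)$ in the opposite direction. The composites $\eta'_X\circ\eta_X$ and $\mathrm{id}_{\vartheta^0 D(X)}$ both make the initiality diagram of $(\vartheta^0 D(X),\iota^0_X,\vartheta^0_X)$ commute when it is compared to itself, so the uniqueness clause of Definition~\ref{def:initial-fixed-point} forces $\eta'_X\circ\eta_X=\mathrm{id}$; symmetrically $\eta_X\circ\eta'_X=\mathrm{id}$. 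Thus each $\eta_X$ is an isomorphism of linear orders, and it is uniquely characterized by the commutativity of the diagram in the statement.

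Next I would verify naturality. Given an $\lo$-morphism $f:X\to Y$, both $\eta_Y\circ\vartheta^0 D(f)$ and $\vartheta^1 D(f)\circ\eta_X$ are order embeddings $\vartheta^0 D(X)\to\vartheta^1 D(Y)$. The claim is that each satisfies the universal property that characterizes the unique embedding from the initial Bachmann-Howard fixed point $(\vartheta^0 D(X),\iota^0_X,\vartheta^0_X)$ to the Bachmann-Howard fixed point $(\vartheta^1 D(Y),\iota^1_Y\circ f,\vartheta^1_Y)$ of $D$ over $X$ (cf.~the construction of the functorial action in the proof of Proposition~\ref{prop:extend-into-derivative}). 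For $\eta_Y\circ\vartheta^0 D(f)$ this follows by pasting the square defining $\vartheta^0 D(f)$ with the square defining $\eta_Y$; for $\vartheta^1 D(f)\circ\eta_X$ it follows by pasting the square defining $\eta_X$ with the square defining $\vartheta^1 D(f)$. In both pastings the triangles involving $X$ match because $\iota^1_Y\circ f$ factors as $\iota^1_Y\circ f=\eta_Y\circ\iota^0_Y\circ f=\eta_Y\circ\vartheta^0 D(f)\circ\iota^0_X$ and, dually, as $\iota^1_Y\circ f=\vartheta^1 D(f)\circ\iota^1_X=\vartheta^1 D(f)\circ\eta_X\circ\iota^0_X$. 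Invoking the uniqueness part of initiality gives the naturality equation $\eta_Y\circ\vartheta^0 D(f)=\vartheta^1 D(f)\circ\eta_X$. Uniqueness of the entire natural transformation~$\eta$ is immediate, since each $\eta_X$ is itself uniquely determined by the commuting diagram.

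The only place that requires any care is checking that the two candidates for the unique embedding $\vartheta^0 D(X)\to\vartheta^1 D(Y)$ really do fit the same initiality diagram; this is a routine pasting of commutative squares, using functoriality of $D$ to recognize that $D(\eta_Y\circ\vartheta^0 D(f))=D(\eta_Y)\circ D(\vartheta^0 D(f))$ and similarly on the other side. No genuine obstacle arises, because all the substantive work has already been carried out in Theorem~\ref{thm:initial-characterize} and Proposition~\ref{prop:extend-into-derivative}; the present statement is, in effect, the abstract uniqueness of initial objects transported from each fibre $\vartheta^0 D(X)$ to the level of $\lo$-dilators.
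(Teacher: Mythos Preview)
Your proposal is correct and follows essentially the same approach as the paper: the componentwise isomorphisms~$\eta_X$ come from the standard uniqueness-of-initial-objects argument, and naturality is obtained by showing that both $\eta_Y\circ\vartheta^0 D(f)$ and $\vartheta^1 D(f)\circ\eta_X$ satisfy the universal property of the unique map into the fixed point $(\vartheta^1 D(Y),\iota^1_Y\circ f,\vartheta^1_Y)$. The paper's own proof simply records that the componentwise part is immediate from initiality and then defers the naturality verification to \cite[Theorem~4.4]{freund-kruskal-gap}; you have spelled out precisely that deferred argument.
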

\begin{proof}
Existence and uniqueness of isomorphisms $\eta_X:\vartheta^0D(X)\to\vartheta^1D(X)$ as in the diagram are due to the assumption that $(\vartheta^0D(X),\iota^0_X,\vartheta^0_X)$ and $(\vartheta^1D(X),\iota^1_X,\vartheta^1_X)$ are initial Bachmann-Howard fixed points of~$D$ over~$X$. The non-trivial claim of the proposition is that these isomorphisms are natural in~$X$. This is shown as in the corresponding result for partial orders, for which we refer to~\cite[Theorem~4.4]{freund-kruskal-gap}.
\end{proof}

In the next section we will want to take iterated Bachmann-Howard \mbox{derivatives}. To see that the result is still unique, one should check that the derivatives of isomorphic $\lo$-dilators are isomorphic. This follows from the previous result and the following observation, which is shown as in the partial case (see~\cite[Proposition~4.6]{freund-kruskal-gap}).

\begin{lemma}\label{lem:deriv-it-unique}
If $(\vartheta E,\iota,\vartheta)$ is a Bachmann-Howard derivative of~$E$ and $\eta:D\To E$ is a natural isomorphism, then $(\vartheta E,\iota,\vartheta\bullet\eta)$ is a Bachmann-Howard derivative of~$D$, where $\vartheta\bullet\eta:D\circ\vartheta E\To\vartheta E$ is given by $(\vartheta\bullet\eta)_X=\vartheta_X\circ\eta_{\vartheta E(X)}$.
\end{lemma}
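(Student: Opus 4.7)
Fix a linear order~$X$ and abbreviate $Z=\vartheta E(X)$. The plan is first to verify that the tuple $(Z,\iota_X,(\vartheta\bullet\eta)_X)$ satisfies the three clauses of Definition~\ref{def:BH-fp} for the $\lo$-dilator~$D$, then to establish its initiality via the criterion of Theorem~\ref{thm:initial-characterize}(i), and finally to confirm that the family $(\vartheta\bullet\eta)_X$ is natural in~$X$. Two properties of~$\eta$ will be used throughout: each component $\eta_Y:D(Y)\to E(Y)$ is an order isomorphism, and $\supp^E\circ\eta=\supp^D$ (as recalled in the paragraph preceding Proposition~\ref{prop:deriv-unique}).

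Clause~(i) of Definition~\ref{def:BH-fp} holds verbatim because $\iota_X$ is unchanged. Clause~(ii) reduces to the corresponding clause for~$\vartheta$, since $(\vartheta\bullet\eta)_X(\sigma)=\vartheta_X(\eta_Z(\sigma))$ always lies in~$\rng(\vartheta_X)$. For clause~(iii), I would use~$\eta_Z$ to translate both the hypotheses and the conclusion: the relation $\sigma<_{D(Z)}\tau$ is equivalent to $\eta_Z(\sigma)<_{E(Z)}\eta_Z(\tau)$ because $\eta_Z$ preserves and reflects the order, while $\supp^D_Z(\sigma)=\supp^E_Z(\eta_Z(\sigma))$ transfers the support side conditions. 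Clause~(iii) for~$\vartheta$, applied to $\eta_Z(\sigma)$ and $\eta_Z(\tau)$, then yields $\vartheta_X(\eta_Z(\sigma))<_Z\vartheta_X(\eta_Z(\tau))$, which is exactly $(\vartheta\bullet\eta)_X(\sigma)<_Z(\vartheta\bullet\eta)_X(\tau)$; the \emph{furthermore} part is obtained in the same way.

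For initiality I would invoke Theorem~\ref{thm:initial-characterize}(i). Bijectivity of~$\eta_Z$ gives $\rng((\vartheta\bullet\eta)_X)=\rng(\vartheta_X)$, so $Z=\rng(\iota_X)\cup\rng((\vartheta\bullet\eta)_X)$. The same height function $h:Z\to\mathbb N$ that witnesses initiality of $(Z,\iota_X,\vartheta_X)$ serves unchanged: any $z\in\supp^D_Z(\sigma)=\supp^E_Z(\eta_Z(\sigma))$ satisfies $h(z)<h(\vartheta_X(\eta_Z(\sigma)))=h((\vartheta\bullet\eta)_X(\sigma))$.

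Finally, naturality of $(\vartheta\bullet\eta)_X$ in~$X$ is a two-step diagram chase. Given $f:X\to Y$, naturality of~$\eta$ at the morphism $\vartheta E(f)$ rewrites $\eta_{\vartheta E(Y)}\circ D(\vartheta E(f))$ as $E(\vartheta E(f))\circ\eta_{\vartheta E(X)}$, and naturality of~$\vartheta$ rewrites $\vartheta_Y\circ E(\vartheta E(f))$ as $\vartheta E(f)\circ\vartheta_X$; composing and unfolding the definition of $\vartheta\bullet\eta$ gives the required square. I expect the only real obstacle to be bookkeeping: the conceptual content is that a natural isomorphism of $\lo$-dilators is, by definition, strong enough to transport any structure formulated in terms of order and supports along it, so no new verifications are needed beyond routine translation.
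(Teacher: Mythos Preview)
Your proof is correct and complete. The paper does not actually prove this lemma: it simply asserts that the argument is parallel to the partial-order case and cites \cite[Proposition~4.6]{freund-kruskal-gap}. Your write-up supplies exactly the details one would give if one unfolded that reference---verifying clauses~(i)--(iii) of Definition~\ref{def:BH-fp} by transporting along the isomorphism~$\eta_Z$, establishing initiality via the criterion of Theorem~\ref{thm:initial-characterize}(i), and checking naturality by a two-step diagram chase---so there is no genuine methodological difference to discuss.
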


Having established existence and uniqueness, we will speak of `the' Bachmann-Howard derivative of~$D$ and denote `it' by $\vartheta D$. To complete the basic theory of Bachmann-Howard derivatives, we should show that $\vartheta D$ is a $\wo$-dilator (i.\,e., preserves well foundedness) when the same holds for~$D$. Since a particularly short proof of this fact exploits the connection with partial orders, we defer this result until Corollary~\ref{cor:deriv-dilator} below.

\section{Application, part~1: unary collapsing functions}\label{sect:collapsing-functions}

In the introduction we have described a recursive construction of $\lo$-dilators~$\te^0_n$. Starting with the identity functor $\te^0_0:\lo\to\lo$ (which is an $\lo$-dilator with support functions $\supp_X:\te^0_0(X)=X\to[X]^{<\omega}$ given by $\supp_X(x)=\{x\}$), we have defined $\te^0_{n+1}$ as the Bachmann-Howard derivative of~$\te^0_n$. This construction is explained and justified by the results of Section~\ref{sect:BH-derivatives}. Also in the introduction, we have claimed that $\te^0_n(1)$ coincides with the order $T_n[0]$ from~\cite{MRW-linear} (where $1$ is the order with a single element). This claim will be proved in the present section.

Due to the uniqueness results from Section~\ref{sect:BH-derivatives} we can argue `the other way around'. This means that we will not, at first, consider $\te^0_n$ as given by a recursive construction. Instead, we will give an \emph{ad hoc} definition of $\te^0_n$, which extends the definition of the term systems $T_n[0]\cong\te_n(1)$ from~\cite{MRW-linear}. In a second step, we will define transformations $\iota^n:\operatorname{Id}\To\te^0_{n+1}$ and $\vartheta^n:\te^0_n\circ\te^0_{n+1}\To\te^0_{n+1}$ that turn $\te^0_{n+1}$ into a Bachmann-Howard derivative of~$\te^0_n$. By Proposition~\ref{prop:deriv-unique} and Lemma~\ref{lem:deriv-it-unique}, this will entail that our \emph{ad hoc} definition and the recursive construction yield the same result after all (up to natural isomorphism).

\begin{definition}\label{def:term-system-T}
Given a linear order~$X$, we generate a set $\te(X)$ of terms and a function $S:\te(X)\to\mathbb N\cup\{-1\}$ by simultaneous recursion:
\begin{enumerate}[label=(\roman*)]
\item For each $x\in X$, include a term $\overline x\in\te(X)$ with $S(\overline x)=-1$.
\item Given $s\in\te(X)$ and $i\geq\max\{S(s)-1,0\}$, add $\vartheta_is\in\te(X)$ with $S(\vartheta_is)=i$.
\end{enumerate}
For each~$i\in\mathbb N\cup\{-1\}$, let $k_i:\te(X)\to\te(X)$ be given by the recursive clauses
\begin{equation*}
k_i(\overline x)=\overline x\quad\text{and}\quad k_i(\vartheta_js)=\begin{cases}
\vartheta_js & \text{if $j\leq i$},\\
k_i(s) & \text{if $j>i$}.
\end{cases}
\end{equation*}
To define a binary relation $<_{\te(X)}$ on~$\te(X)$, we declare that $s<_{\te(X)}t$ holds if, and only if, one of the following clauses applies:
\begin{enumerate}[label=(\roman*')]
\item We have $s=\overline x$ and $t=\overline y$ with $x<_X y$.
\item The term $s$ is of the form $\overline x$ while $t$ is of the form $\vartheta_jt'$.
\item We have $s=\vartheta_is'$ and $t=\vartheta_jt'$, and one of the following holds:
\begin{itemize}[label={--}]
\item We have $i<j$.
\item We have $i=j$, $s'<_{\te(X)}t'$ and $k_i(s')<_{\te(X)}t$.
\item We have $i=j$ and $s\leq_{\te(X)} k_j(t')$.
\end{itemize}
\end{enumerate}
Concerning the last clause, we clarify that $s\leq_{\te (X)}t$ abbreviates the disjunction of $s<_{\te (X)}t$ and the statement that $s$ and $t$ are the same term.
\end{definition}

To justify the definition of $<_{\te(X)}$ one can employ the function $h:\te(X)\to\mathbb N$ given by $h(\overline x)=0$ and $h(\vartheta_is)=h(s)+1$. An easy induction shows $h(k_i(s))\leq h(s)$. It follows that $s<_{\te(X)}t$ can be decided by recursion on $h(s)+h(t)$.

\begin{lemma}\label{lem:T(X)-linear}
The relation $<_{\te(X)}$ on $\te(X)$ is a linear order.
\end{lemma}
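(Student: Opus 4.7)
The plan is to prove the four defining properties of a strict linear order---irreflexivity, asymmetry, trichotomy, and transitivity---by simultaneous induction on $h(s)+h(t)$ (respectively $h(s)+h(t)+h(u)$ for transitivity). The induction is well-founded because $h(k_i(s))\leq h(s)$ is immediate by induction on terms, so every recursive call to $<_{\te(X)}$ in the defining clauses involves no greater pair of heights than the one being tested.

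Before starting the main induction, I would record two auxiliary facts about collapsing. First, $s$ is strictly less than every term that contains it as the immediate subterm of a $\vartheta_i$, i.e.\ $s<_{\te(X)}\vartheta_is$ whenever the right-hand side lies in $\te(X)$: this is proved by case analysis on whether $s$ is a barred term (use clause (ii')) or a $\vartheta$-term (use (iii') with either $i<j$ or the third subclause, noting that $k_i(s)$ is a subterm of $s$ so the ambient inductive hypothesis applies). Second, $k_i(s)\leq_{\te(X)}s$ for every $i$ and $s$, by induction on $h(s)$: the only non-trivial case is $s=\vartheta_js'$ with $j>i$, where $k_i(s)=k_i(s')\leq s'<s$ by induction and the subterm fact. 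Both statements can in fact be woven directly into the simultaneous induction; what matters is that they are always available at the complexity needed.

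In the main induction, clauses (i'), (ii'), and the $i<j$ subclause of (iii') are immediate from the inductive hypothesis and the linearity of~$X$. The interesting work is in clause (iii') with $i=j$ and $s=\vartheta_is'$, $t=\vartheta_it'$. For asymmetry, the middle and bottom subclauses cannot both fire in opposite directions: $s<t$ through the middle subclause with $k_i(s')<t$, combined with $t<s$ through the bottom subclause with $t\leq k_i(s')$, would give $t\leq k_i(s')<t$, contradicting inductive irreflexivity. For trichotomy, apply the inductive trichotomy to $s'$ and $t'$; if $s'<t'$ then either $k_i(s')<t$ (middle subclause, giving $s<t$) or else $t\leq k_i(s')$ by inductive trichotomy applied to $t$ and $k_i(s')$ (bottom subclause with roles reversed, giving $t<s$), and symmetrically if $t'<s'$. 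The case $s'=t'$ forces $s=t$. Transitivity proceeds by case split on the formats of $s,t,u$; the combinatorially densest case is when all three are $\vartheta$-terms with a common subscript, where one traces the chains of $k_i$-values through both the middle and bottom subclauses, repeatedly invoking $k_i(r)\leq_{\te(X)}r$ and the inductive transitivity.

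The main obstacle I expect is the bookkeeping in the $i=j$ case of (iii'): each of asymmetry, trichotomy, and transitivity requires several small sub-arguments, all depending on $k_i(s)\leq_{\te(X)}s$ being already in hand at the relevant complexity, and organizing the simultaneous induction so that this inequality is uniformly available is where care is needed. Since the argument is entirely parallel to the non-relativized proof that $<_{\vartheta D(X)}$ is linear (see \cite[Proposition~4.1]{freund-computable}, already invoked in the proof of the previous Proposition), the present proof can be compressed to a pointer to that analogue.
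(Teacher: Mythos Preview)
Your auxiliary fact~1, that $s<_{\te(X)}\vartheta_is$ whenever $\vartheta_is\in\te(X)$, is false. Take $s=\vartheta_1\overline 0$ and $i=0$: then $\vartheta_0\vartheta_1\overline 0<_{\te(X)}\vartheta_1\overline 0=s$ by the first subclause of~(iii'), since the outer index~$0$ of $\vartheta_0s$ is strictly below the outer index~$1$ of~$s$. Your case analysis for this fact covers $S(s)<i$ and $S(s)=i$ but omits the case $S(s)=i+1$, which is permitted by clause~(ii) of Definition~\ref{def:term-system-T} and is exactly where the claim fails.

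This breaks your irreflexivity argument. To show $\vartheta_is'\not<\vartheta_is'$ you must exclude $\vartheta_is'\leq k_i(s')$, and your plan is to chain $k_i(s')\leq s'<\vartheta_is'$; the second inequality is the false fact. The paper handles this by introducing the notion of an $s$-secure subterm and proving, by induction on~$s$ with a side induction, that $s\not\leq k_i(t)$ whenever $t$ is $s$-secure (statement~(1) in the paper's proof). This extra layer is genuinely needed: when $k_i(s')=\vartheta_i r$, unfolding the definition reduces $s\not\leq k_i(s')$ to $s\not\leq k_i(r)$ for a deeper term~$r$, and one must track that $r$ remains $s$-secure. The paper's own counterexample $s=\vartheta_1\vartheta_0\vartheta_1\vartheta_1\overline 0$, $t=\vartheta_1\vartheta_1\overline 0$ shows that dropping the $s$-secure restriction makes the statement false, so some such bookkeeping is unavoidable. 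Your pointer to \cite[Proposition~4.1]{freund-computable} does not help here: that result concerns the abstract term system $\vartheta D(X)$ of Definition~\ref{def:fixed-point-syntactic}, whose clauses are structurally different from those of~$\te(X)$, and the irreflexivity argument there does not transfer.
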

\begin{proof}
For a term $s=\vartheta_is'\in\te (X)$, the $s$-secure subterms of $s$ are defined as follows: The term $s'$ is $s$-secure. And if $\vartheta_jt$ with $j\geq i$ is $s$-secure, then so is~$t$. More intuitively, $t$ is $s$-secure if we have $s'=\vartheta_{j_1}\ldots\vartheta_{j_n}t$ with $j_1,\dots,j_n\geq i$. If $t$ is $s$-secure for $s=\vartheta_is'$, then the following holds: First, $k_i(t)$ is $s$-secure. Secondly, we either have $k_i(t)=k_i(s')$, or $t$ is $k_i(s')$-secure with $k_i(s')=\vartheta_is''$ for the same~$i$. We will establish the following two statements simultaneously by induction on~$s$. To see that the restriction to $s$-secure subterms is necessary, consider the counterexample that would arise from $s=\vartheta_1\vartheta_0\vartheta_1\vartheta_10$ and $t=\vartheta_1\vartheta_10=k_1(t)$.
\begin{enumerate}
\item If $t$ is $s$-secure with $s=\vartheta_is'$, then we cannot have $s\leq_{\te (X)}k_i(t)$.
\item We do not have $s<_{\te (X)}s$.
\end{enumerate}
In the induction step for~(1), we use side induction on~$t$. We may write~$k_i(t)=\vartheta_it'$, because the conclusion is trivial when $k_i(t)$ has a different form. Let us exclude all reasons for which $s\leq_{\te (X)}k_i(t)$ could hold: First, note that $h(k_i(t))\leq h(t)<h(s)$ excludes equality. Secondly, given that $t$ is $s$-secure, the same holds for $t'$. Thus the side induction hypothesis excludes $s\leq_{\te (X)}k_i(t')$. Finally, the only remaining reason would involve~$k_i(s')<_{\te (X)}k_i(t)$. The latter entails that $k_i(s')$ and $k_i(t)$ are different, by part~(2) of the simultaneous induction hypothesis. It follows that $t$ is $k_i(s')$-secure, so that the main induction hypothesis excludes~$k_i(s')<_{\te (X)}k_i(t)$. Concerning the induction step for~(2), we note that only terms of the form~$s=\vartheta_is'$ are interesting. Since the induction hypothesis excludes~$s'<_{\te (X)}s'$, the inequality $s<_{\te (X)}s$ would require~$s\leq_{\te (X)} k_i(s')$. This, however, is excluded by part~(1). Trichotomy and transitivity are readily established by induction on the combined term complexity (e.\,g., on $h(s)+h(t)$ for trichotomy between $s$ and $t$).
\end{proof}

Together with trichotomy, statement~(1) from the previous proof yields:

\begin{corollary}\label{cor:T(X)-monotone}
We have $k_i(s)<_{\te(X)}\vartheta_is$ whenever~$\vartheta_is\in\te(X)$.
\end{corollary}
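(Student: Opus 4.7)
The plan is to derive the corollary directly from Lemma~\ref{lem:T(X)-linear}: both its main conclusion that $<_{\te(X)}$ is linear (hence trichotomous) and its auxiliary statement~(1) from the simultaneous induction in the proof. The strategy is to rule out $\vartheta_i s\leq_{\te(X)}k_i(s)$; trichotomy will then force the desired inequality $k_i(s)<_{\te(X)}\vartheta_i s$.

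First I would set $r=\vartheta_i s\in\te(X)$ and observe that, by the base clause in the definition of secure subterms (``The term $s'$ is $s$-secure'', applied with the outer term $r=\vartheta_i s$ playing the role of $s$ there and with $s'$ instantiated to our $s$), the subterm $s$ is $r$-secure. Then I would apply statement~(1) from the proof of Lemma~\ref{lem:T(X)-linear} with its outer term taken to be $r=\vartheta_i s$ and its secure subterm taken to be $s$. The hypothesis ``$t$ is $s$-secure with $s=\vartheta_i s'$'' of that statement is satisfied by construction, so the conclusion yields $r\not\leq_{\te(X)}k_i(s)$, i.e.\ $\vartheta_i s\not\leq_{\te(X)} k_i(s)$.

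Since $<_{\te(X)}$ is a linear order by Lemma~\ref{lem:T(X)-linear}, trichotomy gives exactly one of $k_i(s)<_{\te(X)}\vartheta_i s$, $k_i(s)=\vartheta_i s$, or $\vartheta_i s<_{\te(X)}k_i(s)$. The last two alternatives both imply $\vartheta_i s\leq_{\te(X)}k_i(s)$, which was just excluded. Hence $k_i(s)<_{\te(X)}\vartheta_i s$, as required.

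There is no real obstacle here: the corollary simply extracts a consequence that was essentially built into the proof of the previous lemma, as signalled by the phrase ``Together with trichotomy, statement~(1) from the previous proof yields'' preceding its statement. The only minor care is in aligning the names of bound variables when invoking statement~(1), since the $s$ appearing in that statement plays the role of our $\vartheta_i s$ while its $t$ plays the role of our $s$.
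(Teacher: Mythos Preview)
Your proposal is correct and matches the paper's own argument exactly: apply statement~(1) from the proof of Lemma~\ref{lem:T(X)-linear} with the outer term $\vartheta_i s$ and the secure subterm $s$ to exclude $\vartheta_i s\leq_{\te(X)}k_i(s)$, then invoke trichotomy. Your care in renaming the bound variables is appropriate and the argument is complete as written.
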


We will be particularly interested in the following suborders of~$\te(X)$.

\begin{definition}\label{def:T_n}
For $n\in\mathbb N$ and a linear order~$X$, define $\te_n(X)\subseteq\te(X)$ as the suborder of terms that contain indices below~$n$ only. Equivalently, $\te_n(X)$ is generated just as $\te(X)$, but with the additional restriction~$i<n$ in clause~(ii) of Definition~\ref{def:term-system-T}. Furthermore, we define $\te^0_n(X)=\{s\in\te_n(X)\,|\,S(s)\leq 0\}$ as the suborder of terms that have the form $\overline x$ or outer index~$0$. We will also write $<_{\te(X)}$ (or just~$<$) for the restriction of this order to $\te_n(X)$ and $\te^0_n(X)$.
\end{definition}

As mentioned before, we have the following connection:

\begin{corollary}\label{cor:coincide-mrw-linear}
The orders $T_n$ and $T_n[0]$ from~\cite[Section~2.3.3]{MRW-linear} coincide with our orders $\te_n(1)$ and $\te^0_n(1)$, respectively (where $1$ is the order with a single element).
\end{corollary}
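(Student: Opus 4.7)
The plan is to verify directly, from the syntactic definitions, that Definitions~\ref{def:term-system-T} and~\ref{def:T_n} specialize at $X=1$ to the same systems as $T_n$ and $T_n[0]$ in \cite[Section~2.3.3]{MRW-linear}. Since $1$ has a unique element~$\star$, the only term of the form $\overline{x}$ available is $\overline{\star}$, which I would identify with the constant $0$ of the MRW notation. Under this dictionary, the constructors $\vartheta_i$ of Definition~\ref{def:term-system-T}, together with the well-formedness constraint $i\geq\max\{S(s)-1,0\}$, correspond exactly to the constructors and constraints in the cited reference; the restriction to indices $i<n$ singles out $\te_n(1)$, and the further restriction $S(s)\leq 0$ singles out $\te^0_n(1)$, matching $T_n$ and $T_n[0]$.

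My first step is to check that the sets of terms coincide. This is a straightforward induction on the complexity $h$ introduced just after Definition~\ref{def:term-system-T}: the generating clauses and the bookkeeping function $S$ match those of \cite{MRW-linear} after applying the dictionary above. The second step is to verify that the auxiliary map $k_i$ agrees with the corresponding operation in MRW: both recursively descend along the spine of a term, keeping the first subterm whose outer index is at most $i$ and discarding those whose outer index exceeds $i$. This is immediate from the recursive clauses.

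The third step is to compare the order relation clause by clause. Clause (i') is vacuous at $X=1$ (as $1$ has a single element), and clause (ii') simply records that $\overline{\star}$ is below every $\vartheta_i$-term. Clause (iii') distinguishes three subcases: $i<j$; $i=j$ with $s'<t'$ and $k_i(s')<t$; and $i=j$ with $s\leq k_j(t')$. After the identification of $k_i$, these line up verbatim with the subcases of the comparison in \cite{MRW-linear}. Since in both accounts the relation is decided by induction on $h(s)+h(t)$, this completes the identification. The only obstacle is purely notational: one must align the two syntactic frameworks carefully enough that the clause-by-clause comparison is unambiguous; once this is done, there is nothing to compute.
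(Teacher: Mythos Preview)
Your overall strategy---matching the syntactic definitions clause by clause under the dictionary $\overline\star\leftrightarrow 0$---is exactly what the paper does, and the remarks on term formation, $S$, $k_i$, clauses~(i') and~(ii') are all fine. But there is one genuine discrepancy that you glossed over when you asserted that the three subcases of clause~(iii') ``line up verbatim'' with those of~\cite{MRW-linear}. They do not: in the cited reference, the third subcase carries an \emph{additional} hypothesis. In the paper's variables, \cite[Definition~27]{MRW-linear} requires both $s>t$ \emph{and} $\vartheta_is\leq k_i(t)$ in order to conclude $\vartheta_is<\vartheta_it$, whereas Definition~\ref{def:term-system-T} drops the first condition and keeps only $\vartheta_is\leq k_i(t)$ (in the notation of clause~(iii'): $s\leq k_j(t')$).

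To complete the proof you must show that this extra hypothesis is redundant, and this is precisely why the result is stated as a \emph{corollary}: it uses Corollary~\ref{cor:T(X)-monotone} (equivalently \cite[Lemma~9]{MRW-linear}), which gives $k_i(t)<\vartheta_it$. Then $\vartheta_is\leq k_i(t)$ together with transitivity already forces $\vartheta_is<\vartheta_it$, so the condition $s>t$ contributes nothing. Once you insert this one-line argument, your proof coincides with the paper's.
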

\begin{proof}
If we write $1=\{0\}$ and identify $\overline 0\in\te_n(1)$ with $0\in T_n$, the definitions coincide except at one point: Definition~27 of~\cite{MRW-linear} declares that $s>t$ and $\vartheta_is\leq k_i(t)$ imply $\vartheta_is<\vartheta_i t$. In the corresponding clause~(iii') of our Definition~\ref{def:term-system-T}, we have omitted the condition $s>t$, because it turns out to be superfluous: According to \cite[Lemma~9]{MRW-linear} (cf.~also Corollary~\ref{cor:T(X)-monotone} above) we have $k_i(t)<\vartheta_it$. Hence it follows from transitivity that $\vartheta_is\leq k_i(t)$ alone entails $\vartheta_is<\vartheta_i t$.
\end{proof}

We want to show that $\sf T^0_{n+1}$ is a Bachmann-Howard derivative of $\sf T^0_n$. Officially, this claim does only make sense once we have specified a dilator~$\sf T^0_n$ that extends the transformation $X\mapsto\sf T^0_n(X)$ of linear orders. We defer this extension until later, because we want to start with the most interesting part of the construction:

\begin{definition}\label{def:iota-theta}
For each number $n\in\mathbb N$ and any linear order~$X$, we define a function $\sigma^n_X:\te _n\circ\te ^0_{n+1}(X)\to\te _{n+1}(X)$ by the recursive clauses
\begin{equation*}
\sigma^n_X(\overline r)=r\text{ (with $r\in\te ^0_{n+1}(X)$)}\quad\text{and}\quad \sigma^n_X(\vartheta_is)=\vartheta_{i+1}\sigma^n_X(s).
\end{equation*}
Now we define $\vartheta^n_X:\te ^0_n\circ\te ^0_{n+1}(X)\to\te ^0_{n+1}(X)$ by setting $\vartheta^n_X(s)=\vartheta_0\sigma^n_X(s)$. Finally, let $\iota^n_X:X\to\te ^0_{n+1}(X)$ be given by~$\iota^n_X(x)=\overline x$.
\end{definition}

Note that elements $r=\overline x\in\te ^0_{n+1}(X)$ give rise to elements $\overline r=\overline{\overline x}\in\te _n\circ\te ^0_{n+1}(X)$. We have
\begin{equation*}
S(\sigma^n_X(s))=\begin{cases}
S(s)=-1 & \text{if $s$ is of the form $\overline{\overline x}$},\\
S(s)+1 & \text{otherwise}.
\end{cases}
\end{equation*}
For $i\geq 0$, it follows that the condition $i\geq S(s)-1$ from Definition~\ref{def:term-system-T} is equivalent to $i+1\geq S(\sigma^n_X(s))-1$. This justifies the second clause in the definition of~$\sigma^n_X$. To justify the definition of~$\vartheta^n_X$, it suffices to note that $s\in\te ^0_n\circ\te ^0_{n+1}(X)$ entails $S(s)\leq 0$ and hence $0\geq S(\sigma^n_X(s))-1$. To formulate the next result, we need one new piece of notation: For $s\in\te(X)$, the value $k_{-1}(s)\in\te(X)$ is always of the form $\overline x$ with~$x\in X$. We define $\underline k:\te(X)\to X$ by setting
\begin{equation*}
\underline k(s)=x\quad\text{for $k_{-1}(s)=\overline x$}.
\end{equation*}
In particular, $s\in\te ^0_n\circ\te ^0_{n+1}(X)$ yields $\underline k(s)\in\te ^0_{n+1}(X)$. We will see that the following proposition ensures the crucial clause~(iii) of Definition~\ref{def:BH-fp}.

\begin{proposition}\label{prop:T-deriv-prep}
The function $\sigma^n_X:\te _n\circ\te ^0_{n+1}(X)\to\te _{n+1}(X)$ is an order isomorphism, for each $n\in\mathbb N$ and any linear order~$X$. Furthermore, the following holds for all $s,t\in\te ^0_n\circ\te ^0_{n+1}(X)$:
\begin{itemize}[label={--}]
\item If we have $s<t$ and $\underline k(s)<\vartheta^n_X(t)$, then we have~$\vartheta^n_X(s)<\vartheta^n_X(t)$.
\item We have $\underline k(s)<\vartheta^n_X(s)$.
\end{itemize}
\end{proposition}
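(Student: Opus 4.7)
The plan splits naturally into three stages, corresponding to bijectivity, order-preservation, and the two bullets about $\vartheta^n_X$. For bijectivity I would construct an explicit inverse $\tau\colon\te_{n+1}(X)\to\te_n\circ\te^0_{n+1}(X)$ by the clauses $\tau(r)=\overline r$ whenever $r\in\te^0_{n+1}(X)$ (i.e.\ $r=\overline x$ or $r=\vartheta_0r'$) and $\tau(\vartheta_{i+1}s)=\vartheta_i\tau(s)$ for $i\geq 0$. The identities $\sigma^n_X\circ\tau=\mathrm{id}$ and $\tau\circ\sigma^n_X=\mathrm{id}$ are then a routine structural induction. Well-definedness of both maps reduces to bookkeeping on the outer-index condition from Definition~\ref{def:term-system-T}(ii): the point is that $\sigma^n_X$ and $\tau$ shift $S$-values by $+1$ and $-1$ in precisely the same way as they shift the outer indices, with base elements of $\te^0_{n+1}(X)$ (whose $S$-value is~$\leq 0$) playing the role of atoms for~$\tau$.

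For order-preservation, since both orders are linear it is enough to establish monotonicity of $\sigma^n_X$, by induction on $h(s)+h(t)$. Clauses~(i') and~(ii') of Definition~\ref{def:term-system-T} are immediate because $\sigma^n_X$ restricted to terms $\overline r$ is just the inclusion $\te^0_{n+1}(X)\hookrightarrow\te_{n+1}(X)$, while $\sigma^n_X(\vartheta_jt')$ carries the strictly positive outer index $j+1$. The delicate case is clause~(iii') with equal outer indices $i=j$, for which I need the auxiliary commutation identity
\[
\sigma^n_X(k_i(s))=k_{i+1}(\sigma^n_X(s))\qquad(i\geq 0,\ s\in\te_n\circ\te^0_{n+1}(X)).
\]
I would prove this by a short induction on~$s$, splitting the step $s=\vartheta_js'$ according to $j\leq i$ versus $j>i$ (the base case uses $k_{i+1}(r)=r$ for every $r\in\te^0_{n+1}(X)$). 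With this identity and the induction hypothesis, each of the three sub-conditions of clause~(iii') for $\vartheta_is'<\vartheta_it'$ transfers verbatim to $\vartheta_{i+1}\sigma^n_X(s')<\vartheta_{i+1}\sigma^n_X(t')$.

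For the two bullets about $\vartheta^n_X$, the crucial ingredient is the identity
\[
\underline k(s)=k_0(\sigma^n_X(s))\qquad(s\in\te_n\circ\te^0_{n+1}(X)),
\]
which I would check by induction on~$s$: for $s=\overline r$ both sides equal~$r$ (note that $k_0$ fixes every element of~$\te^0_{n+1}(X)$), and for $s=\vartheta_js'$ the outer index $j+1$ of $\sigma^n_X(s)$ is strictly positive, so $k_0$ peels it off and the induction hypothesis applies. The second bullet is then immediate from Corollary~\ref{cor:T(X)-monotone}, which gives $k_0(\sigma^n_X(s))<\vartheta_0\sigma^n_X(s)=\vartheta^n_X(s)$. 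For the first bullet I would apply clause~(iii') of Definition~\ref{def:term-system-T} to the target inequality $\vartheta_0\sigma^n_X(s)<\vartheta_0\sigma^n_X(t)$: its first sub-case demands $\sigma^n_X(s)<\sigma^n_X(t)$, which follows from $s<t$ and the first assertion, together with $k_0(\sigma^n_X(s))<\vartheta^n_X(t)$, which is exactly the hypothesis $\underline k(s)<\vartheta^n_X(t)$ via the identity above. The main obstacle I anticipate is the commutation lemma and the matching case analysis for clause~(iii'); once that is in place, everything else is essentially a translation between two isomorphic presentations, driven by the observation that $\sigma^n_X$ just re-reads a ``shift of outer indices by~$1$'' as the insertion of an $\overline{\cdot}$ over base elements of $\te^0_{n+1}(X)$.
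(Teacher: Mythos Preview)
Your proposal is correct and follows essentially the same route as the paper. The paper proves surjectivity by a direct induction on terms of $\te_{n+1}(X)$ rather than writing down an explicit inverse, and it states the commutation identity uniformly as $k_{i+1}(\sigma^n_X(s))=\sigma^n_X(k_i(s))$ for all $i\in\mathbb N\cup\{-1\}$, so that your separate identity $\underline k(s)=k_0(\sigma^n_X(s))$ is just the case $i=-1$; apart from these cosmetic choices the two arguments coincide.
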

\begin{proof}
An easy induction over an arbitrary term $r\in\te _{n+1}(X)$ shows that it lies in the range of~$\sigma^n_X$. As preparation for the rest of the proof, one inductively shows
\begin{equation*}
k_{i+1}(\sigma^n_X(s))=\sigma^n_X(k_i(s))
\end{equation*}
for $i\in\mathbb N\cup\{-1\}$. In the crucial case of a term $s=\overline r$, this follows from $k_i(s)=s$ and $\sigma^n_X(s)=r=k_{i+1}(r)$, where the latter relies on $S(r)\leq i+1$ due to $r\in\te^0_{n+1}(X)$. Now an easy induction on $h(s)+h(t)$ shows that $s<t$ implies $\sigma^n_X(s)<\sigma^n_X(t)$. This implication does automatically upgrade to an equivalence, as we are concerned with linear orders. In particular $\sigma^n_X$ is injective, and indeed an order isomorphism. Concerning the claims about~$\vartheta^n_X$, we first note that $\underline k(s)<\vartheta^n_X(t)$ entails
\begin{equation*}
k_0(\sigma^n_X(s))=\sigma^n_X(k_{-1}(s))=\underline k(s)<\vartheta_0\sigma^n_X(t).
\end{equation*}
Since we already know that $s<t$ entails $\sigma^n_X(s)<\sigma^n_X(t)$, we get
\begin{equation*}
\vartheta ^n_X(s)=\vartheta_0\sigma^n_X(s)<\vartheta_0\sigma^n_X(t)=\vartheta ^n_X(t)
\end{equation*}
under the given assumptions. Arguing as before, we see that the remaining claim amounts to $k_0(\sigma^n_X(s))<\vartheta_0\sigma^n_X(s)$, which holds by Corollary~\ref{cor:T(X)-monotone}.
\end{proof}

As promised, we now extend $\te^0_n$ (and in the process also $\te_n$) into an $\lo$-dilator.

\begin{definition}\label{def:T-dilator}
For each embedding $f:X\to Y$ of linear orders, we define a function $\te(f):\te(X)\to\te(Y)$ by the recursive clauses
\begin{equation*}
\te(f)(\overline x)=\overline{f(x)}\quad\text{and}\quad\te(f)(\vartheta_is)=\vartheta_i\te(f)(s).
\end{equation*}
Furthermore, we define functions $\supp^\te_X:\te(X)\to[X]^{<\omega}$ by setting
\begin{equation*}
\supp^\te_X(s)=\{\underline k(s)\}.
\end{equation*}
We will also write $\supp^T_X$ for the restrictions of this function to $\te_n(X)$ and to~$\te^0_n(X)$. By $\te_n(f):\te_n(X)\to\te_n(Y)$ and $\te^0_n(f):\te^0_n(X)\to\te^0_n(Y)$ we denote the restrictions of~$\te(f)$ with (co-)domains as given.
\end{definition}

It is immediate that we have $S(\te(f)(s))=S(s)$, which confirms that the functions $\te(f)$, $\te_n(f)$ and $\te^0_n(f)$ are well-defined with the indicated codomains. We have given recursive definitions because they easily generalize from sequences to more complicated data types (cf.~the treatment of trees in~\cite{freund-kruskal-gap}). In the present case, it may simplify matters if we observe
\begin{equation*}
\te(f)(\vartheta_{j_1}\ldots\vartheta_{j_n}\overline x)=\vartheta_{j_1}\ldots\vartheta_{j_n}\overline{f(x)}\quad\text{and}\quad\supp^\te_X(\vartheta_{j_1}\ldots\vartheta_{j_n}\overline x)=\{x\}.
\end{equation*}
Let us now verify the following:

\begin{proposition}\label{prop:te-dilators}
The previous definition extends $\te_n$ and $\te^0_n$ into $\lo$-dilators.
\end{proposition}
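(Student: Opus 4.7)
The plan is to verify the three ingredients of Definition~\ref{def:lo-dilator} for $\te_n$ and $\te^0_n$, leveraging the compact description
\begin{equation*}
\te(f)(\vartheta_{j_1}\ldots\vartheta_{j_n}\overline x)=\vartheta_{j_1}\ldots\vartheta_{j_n}\overline{f(x)}
\end{equation*}
noted before the proposition, which makes nearly everything a one-line induction. First I would check functoriality: straightforward inductions on terms give $\te(\mathrm{id}_X)=\mathrm{id}_{\te(X)}$ and $\te(g\circ f)=\te(g)\circ\te(f)$. Since $\te(f)$ leaves the sequence of indices in a term unchanged, and since $S(\te(f)(s))=S(s)$, it restricts correctly to $\te_n$ and $\te^0_n$.

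Next I would prove that each $\te(f)$ is an order embedding. The key auxiliary lemma, proved by induction on $s$, is the commutation
\begin{equation*}
k_i(\te(f)(s))=\te(f)(k_i(s))\qquad\text{for all }i\in\mathbb N\cup\{-1\},
\end{equation*}
together with $h(\te(f)(s))=h(s)$. Granted this, one shows $s<_{\te(X)}t\iff \te(f)(s)<_{\te(Y)}\te(f)(t)$ by induction on $h(s)+h(t)$: clauses~(i'), (ii'), (iii') of Definition~\ref{def:term-system-T} are phrased purely in terms of indices, strict comparisons of strictly simpler subterms, and the $k_i$ operation, all of which are preserved. The base case $\overline x<_{\te(X)}\overline y$ uses that $f:X\to Y$ is itself an order embedding.

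Third, for naturality of the supports, the observation $\underline k(\te(f)(s))=f(\underline k(s))$ (a one-line induction) immediately yields
\begin{equation*}
\supp^\te_Y(\te(f)(s))=\{f(\underline k(s))\}=[f]^{<\omega}(\supp^\te_X(s)).
\end{equation*}
Restricting to $\te_n$ and $\te^0_n$ is harmless, since the supports of elements of $\te_n(X)$ and $\te^0_n(X)$ are computed by exactly the same formula.

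Finally, for the support condition, assume $\sigma\in\te_n(Y)$ satisfies $\supp^\te_Y(\sigma)=\{\underline k(\sigma)\}\subseteq\rng(f)$, so that $\underline k(\sigma)=f(x)$ for a unique $x\in X$. Writing $\sigma=\vartheta_{j_1}\ldots\vartheta_{j_k}\overline{f(x)}$, the candidate preimage $\sigma_0:=\vartheta_{j_1}\ldots\vartheta_{j_k}\overline x$ satisfies $\te(f)(\sigma_0)=\sigma$. The only thing to check is that $\sigma_0$ actually lies in $\te_n(X)$ (resp.\ $\te^0_n(X)$): but the indices $j_1,\dots,j_k$ are the same as in $\sigma$, and the admissibility conditions of Definition~\ref{def:term-system-T}(ii) depend only on the values of $S$ on subterms, which are unchanged since $S(\te(f)(t))=S(t)$. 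The same holds for the outer-index constraint $S(\sigma_0)\leq 0$ in the case of $\te^0_n$. The expected main obstacle is the order-embedding step, which forces the commutation lemma for $k_i$ and a careful induction because $<_{\te(X)}$ refers to $k_i$ nontrivially; once that is in hand, the remaining clauses are bookkeeping.
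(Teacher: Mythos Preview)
Your proposal is correct and follows essentially the same route as the paper: the commutation lemma $k_i(\te(f)(s))=\te(f)(k_i(s))$, the induction on $h(s)+h(t)$ for order preservation, the identity $\underline k(\te(f)(s))=f(\underline k(s))$ for naturality of supports, and the construction of a preimage for the support condition all appear in the paper's proof. The only cosmetic differences are that the paper proves just the forward direction $s<_{\te(X)}t\Rightarrow\te(f)(s)<_{\te(Y)}\te(f)(t)$ (which suffices on linear orders) and phrases the support condition as a structural induction on terms rather than using the flattened form $\vartheta_{j_1}\ldots\vartheta_{j_k}\overline{f(x)}$.
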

\begin{proof}
A straightforward induction over~$s\in\te(X)$ shows $k_i(\te(f)(s))=\te(f)(k_i(s))$. Given an order embedding $f:X\to Y$, one can now check
\begin{equation*}
s<_{\te(X)}t\quad\Rightarrow\quad\te(f)(s)<_{\te(Y)}\te(f)(t)
\end{equation*}
by induction over~$h(s)+h(t)$ (for $h:\te(X)\to\mathbb N$ as given after Definition~\ref{def:term-system-T}). Two more easy inductions show that~$\te$ respects identity morphisms and compositions. It follows that $\te_n$ and $\te^0_n$ are endofunctors of linear orders. By the definition of $\underline k$ and the first line of this proof, we have $\underline k(T(f)(s))=f(\underline k(s))$. This yields
\begin{equation*}
\supp^\te_Y\circ\te(f)(s)=\{\underline k(T(f)(s))\}=[f]^{<\omega}(\{\underline k(s)\})=[f]^{<\omega}\circ\supp^\te_X(s),
\end{equation*}
so that $\supp^\te$ is natural. To conclude that $\te_n$ is a dilator, we show
\begin{equation*}
\supp^\te_Y(r)\subseteq\rng(f)\quad\Rightarrow\quad r\in\rng(\te_n(f))
\end{equation*}
by induction over~$r\in\te_n(Y)$, still for $f:X\to Y$. In the base case of a term~$r=\overline y$ we observe $\supp^\te_Y(r)=\{\underline k(\overline y)\}=\{y\}$. By the antecedent of our implication we may write $y=f(x)$, which yields $r=\te_n(f)(\overline x)$ as desired. In the step for $r=\vartheta_ir'$ we note that $k_{-1}(r)=k_{-1}(r')$ entails $\supp^\te_Y(r)=\supp^\te_Y(r')$. Given the antecedent of our implication, we can thus invoke the induction hypothesis to get $r'=\te_n(f)(r_0')$ for some $r_0'\in\te_n(X)$. In view of $S(r_0')=S(\te(f)(r_0'))=S(r')$ we may form the term $\vartheta_ir_0'\in\te_n(X)$ to get $r=\te_n(f)(\vartheta_ir_0')\in\rng(\te_n(f))$. In order to deduce the analogous implication for~$\te^0_n$, we need only observe that $\te_n(f)(r_0)=r\in\te^0_n(Y)$ entails $S(r_0)=S(r)\leq 0$ and hence $r_0\in\te^0_n(X)\subseteq\te_n(X)$.
\end{proof}

The following theorem is the main result of this section. We write $\iota^n$ and $\vartheta^n$ for the families of functions $\iota^n_X:X\to\te^0_{n+1}(X)$ and $\vartheta^n_X:\te^0_n\circ\te^0_{n+1}(X)\to\te^0_{n+1}(X)$, which are indexed by the linear order~$X$ (cf.~Definition~\ref{def:iota-theta}).

\begin{theorem}\label{thm:T-n+1-deriv}
The Bachmann-Howard derivative of $\te^0_n$ is given by $(\te^0_{n+1},\iota^n,\vartheta^n)$, for each number~$n\in\mathbb N$.
\end{theorem}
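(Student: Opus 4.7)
The plan is to verify, in turn, the three ingredients required by Definition~\ref{def:BH-deriv}: that $\te^0_{n+1}$ is an $\lo$-dilator, that the families $\iota^n$ and $\vartheta^n$ are natural in~$X$, and that each triple $(\te^0_{n+1}(X),\iota^n_X,\vartheta^n_X)$ is an initial Bachmann-Howard fixed point of~$\te^0_n$ over~$X$. The first point is already Proposition~\ref{prop:te-dilators}. Naturality of $\iota^n$ is immediate from $\iota^n_X(x)=\overline x$ together with the clause $\te^0_{n+1}(f)(\overline x)=\overline{f(x)}$ of Definition~\ref{def:T-dilator}. Naturality of $\vartheta^n$ reduces, via the identity $\vartheta^n_X(\sigma)=\vartheta_0\sigma^n_X(\sigma)$ and the fact that $\te^0_{n+1}(f)$ commutes with~$\vartheta_0$, to the auxiliary equation $\te_{n+1}(f)\circ\sigma^n_X=\sigma^n_Y\circ\te_n(\te^0_{n+1}(f))$, which falls out from a routine induction on the term complexity in~$\te_n\circ\te^0_{n+1}(X)$ using the recursive clauses of $\sigma^n$ and $\te$.

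To check that $(\te^0_{n+1}(X),\iota^n_X,\vartheta^n_X)$ is a Bachmann-Howard fixed point over~$X$, I would verify the three clauses of Definition~\ref{def:BH-fp}. Clause~(i), that $\iota^n_X$ is an embedding, is immediate from clause~(i') of Definition~\ref{def:term-system-T}; clause~(ii), that $\overline x<\vartheta_0\sigma^n_X(\sigma)$, is clause~(ii') of the same definition. Clause~(iii) is the heart of the matter; since $\supp^{\te^0_n}_Z(\sigma)=\{\underline k(\sigma)\}$ by Definition~\ref{def:T-dilator}, its two halves read: (a)~if $\sigma<\tau$ and $\underline k(\sigma)<\vartheta^n_X(\tau)$, then $\vartheta^n_X(\sigma)<\vartheta^n_X(\tau)$, and (b)~$\underline k(\sigma)<\vartheta^n_X(\sigma)$. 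These are precisely the two bullets of Proposition~\ref{prop:T-deriv-prep}.

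Initiality is then obtained via the criterion of Theorem~\ref{thm:initial-characterize}(i). For the decomposition $\te^0_{n+1}(X)=\rng(\iota^n_X)\cup\rng(\vartheta^n_X)$, every term is either of the form $\overline x=\iota^n_X(x)$ or of the form $\vartheta_0 r$ with $r\in\te_{n+1}(X)$ satisfying $S(r)\leq 1$; in the latter case, since $\sigma^n_X$ is an order isomorphism $\te_n\circ\te^0_{n+1}(X)\to\te_{n+1}(X)$ by Proposition~\ref{prop:T-deriv-prep}, there is a unique $\sigma\in\te_n\circ\te^0_{n+1}(X)$ with $\sigma^n_X(\sigma)=r$, and the identity $S(\sigma^n_X(\sigma))=S(\sigma)+1$ (for $\sigma$ not of the form $\overline{r'}$) combined with $S(r)\leq 1$ forces $S(\sigma)\leq 0$, so that $\sigma\in\te^0_n\circ\te^0_{n+1}(X)$ and $\vartheta_0 r=\vartheta^n_X(\sigma)$. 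For the height function, I would take~$h$ to be the restriction to~$\te^0_{n+1}(X)$ of the natural length function on $\te(X)$ from the remark after Definition~\ref{def:term-system-T}, defined by $h(\overline x)=0$ and $h(\vartheta_i s)=h(s)+1$. A short induction on $\sigma\in\te_n\circ\te^0_{n+1}(X)$ yields $h(\underline k(\sigma))\leq h(\sigma^n_X(\sigma))$: the base case $\sigma=\overline r$ gives equality, and the step $\sigma=\vartheta_i s$ uses $\underline k(\vartheta_i s)=\underline k(s)$ and $h(\sigma^n_X(\vartheta_i s))=h(\sigma^n_X(s))+1$. Hence $h(\underline k(\sigma))<h(\vartheta_0\sigma^n_X(\sigma))=h(\vartheta^n_X(\sigma))$, as required.

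The main obstacle is not conceptual but bookkeeping: one must carefully exploit the bijection $\sigma^n_X$ between $\te_n\circ\te^0_{n+1}(X)$ and $\te_{n+1}(X)$ and verify that it restricts correctly, via the tracking of the outer index~$S$, so as to identify $\rng(\vartheta^n_X)$ with exactly the non-variable part of $\te^0_{n+1}(X)$. The genuinely order-theoretic content of clause~(iii) of Definition~\ref{def:BH-fp} has already been packaged into Proposition~\ref{prop:T-deriv-prep}, and once the indexing is in order the theorem follows.
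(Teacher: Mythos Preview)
Your proposal is correct and follows essentially the same route as the paper's own proof: naturality of $\iota^n$ and $\vartheta^n$ via naturality of $\sigma^n_X$, clauses~(i)--(iii) of Definition~\ref{def:BH-fp} by direct inspection and Proposition~\ref{prop:T-deriv-prep}, and initiality via the criterion of Theorem~\ref{thm:initial-characterize}(i) using surjectivity of~$\sigma^n_X$ together with the same height function~$h$ and the same inequality $h(\underline k(\sigma))\leq h(\sigma^n_X(\sigma))$. The only cosmetic difference is that the paper handles the case split on~$S(\sigma)$ slightly more explicitly (distinguishing $S(s')=-1$ from $S(s')=S(s)-1$), whereas you leave the case $\sigma=\overline{r'}$ implicit; since $S(\overline{r'})=-1\leq 0$ this is harmless.
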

\begin{proof}
A straightforward induction over terms shows that the functions $\sigma^n_X$ from Definition~\ref{def:iota-theta} are natural in~$X$. One can conclude that the same holds for $\iota^n_X$ and~$\vartheta^n_X$. In view of Definition~\ref{def:BH-deriv}, it remains to show that $(\te^0_{n+1}(X),\iota^n_X,\vartheta^n_X)$ is an initial Bachmann-Howard fixed point of~$\te^0_n$ over~$X$, whenever~$X$ is a linear~order. Clauses~(i) and~(ii) of Definition~\ref{def:BH-fp} are immediate by our constructions. Clause~(iii) holds by Proposition~\ref{prop:T-deriv-prep} and the definition of~$\supp^\te$. To complete the proof, we verify the criteria from part~(i) of Theorem~\ref{thm:initial-characterize}. The first criterion demands
\begin{equation*}
\te^0_{n+1}(X)=\rng(\iota^n_X)\cup\rng(\vartheta^n_X).
\end{equation*}
To see that this holds, consider a term $\vartheta_0s\in\te^0_{n+1}(X)$. We note that $s\in\te_{n+1}(X)$ must satisfy $S(s)\leq 1$, due to Definition~\ref{def:term-system-T}. From Proposition~\ref{prop:T-deriv-prep} we know that $\sigma^n_X$ is surjective, which yields $s=\sigma^n_X(s')$ for some $s'\in\te_n\circ\te^0_{n+1}(X)$. By the paragraph after Definition~\ref{def:iota-theta} we get $S(s')=-1$ or $S(s')=S(s)-1\leq 0$, which means that we even have $s'\in\te^0_n\circ\te^0_{n+1}(X)$. We now see
\begin{equation*}
\vartheta_0s=\vartheta_0\sigma^n_X(s')=\vartheta^n_X(s')\in\rng(\vartheta^n_X)\quad\text{with}\quad\vartheta^n_X:\te^0_n\circ\te^0_{n+1}(X)\to\te^0_{n+1}(X),
\end{equation*}
as required. The second criterion requires a function $h:\te^0_{n+1}(X)\to\mathbb N$ with
\begin{equation*}
h(r)<h(\vartheta^n_X(s))\quad\text{for any }s\in\te^0_n\circ\te^0_{n+1}(X)\text{ and }r\in\supp^\te_{\te^0_{n+1}(X)}(s).
\end{equation*}
We show that this holds for (the restriction of) the function $h:\te(X)\to\mathbb N$ from the paragraph after Definition~\ref{def:term-system-T}. By our definition of supports, the only possibility is $r=\underline k(s)$. By induction over $s\in\te_n\circ\te^0_{n+1}(X)$ one can verify $h(\underline k(s))\leq h(\sigma^n_X(s))$. In view of $h(\vartheta^n_X(s))=h(\vartheta_0\sigma^n_X(s))=h(\sigma^n_X(s))+1$ this yields the claim.
\end{proof}

The $\lo$-dilator $\te^0_n$ has been defined in two different ways: First, we have constructed $\te^0_n$ in terms of iterated Bachmann-Howard derivatives, according to steps~(1) and~(2) from the introduction. Secondly, we have given an \emph{ad hoc} construction of~$\te^0_n$ in Definitions~\ref{def:term-system-T},~\ref{def:T_n} and~\ref{def:T-dilator}. The results of the two constructions coincide by Theorem~\ref{thm:T-n+1-deriv}, as explained in the first two paragraphs of this section. Let us point out that there is, nevertheless, an interesting difference: For the \emph{ad hoc} definition of $\te^0_n$, we needed to define $\te_n$ as an auxiliary construct (which also appears in many inductive verifications). In contrast, the recursive construction via Bachmann-Howard derivatives yields $\te^0_n$ directly.

\section{Theory, part~2: connecting with Kruskal derivatives}\label{sect:linear-partial}

In Section~\ref{sect:BH-derivatives} we have introduced the Bachmann-Howard derivative~$\vartheta D$ of an $\lo$-dilator~$D$. A parallel construction on the level of partial orders was previously studied in~\cite{freund-kruskal-gap}: For each suitable dilator~$W$ on partial orders, it yields the so-called Kruskal derivative~$\mathcal T W$. In the present section we establish fundamental connections between Bachmann-Howard and Kruskal derivatives, i.\,e., between the linear and the partial case.

Important notions from~\cite{freund-kruskal-gap} will be recalled informally, but the reader may need to consult the cited reference for precise definitions. A function $f:X\to Y$ between partial orders is a quasi embedding if it is order reflecting, i.\,e., if \mbox{$f(x)\leq_Y f(x')$} implies $x\leq_X x'$. We consider the category~$\po$ of partial orders with the quasi embeddings as morphisms. A $\po$-dilator is a functor $W:\po\to\po$ that satisfies certain conditions, in particular a support condition as in Definition~\ref{def:lo-dilator} above (see~\cite[Definition~2.1]{freund-kruskal-gap} for details). We call $W$ a $\mathsf{WPO}$-dilator if, in addition, $W(X)$ is a well partial order whenever the same holds for~$X$. By $W\!\restriction\!\lo:\lo\to\po$ we denote the restriction of a $\po$-dilator~$W$ to the category of linear orders. Also, we sometimes consider an $\lo$-dilator~$D$ as a functor $D:\lo\to\po$, i.\,e., we implicitly compose it with the inclusion~$\lo\hookrightarrow\po$. We can then consider $\nu$ as in the following:

\begin{definition}
By a quasi embedding of an $\lo$-dilator~$D$ into a $\po$-dilator~$W$ we mean a natural transformation~$\nu:D\To W\!\restriction\!\lo$.
\end{definition}

Note that $\nu$ consists of a quasi embedding $\nu_X:D(X)\to W(X)$ for each linear order~$X$. If a $\po$-dilator $W$ satisfies a certain normality condition, then it has an essentially unique Kruskal derivative~$\mathcal T W$ (see Definition~2.3 and Section~4 of~\cite{freund-kruskal-gap}). The latter comes with natural families of functions
\begin{equation*}
\iota^W_X:X\to\T W(X)\quad\text{and}\quad\kappa^W_X:W\circ\T W(X)\to\T W(X),
\end{equation*}
indexed by the partial order~$X$. From Section~\ref{sect:BH-derivatives} above we recall that the Bachmann-Howard derivative~$\vartheta D$ of an $\lo$-dilator~$D$ comes with functions
\begin{equation*}
\iota^D_X:X\to\vartheta D(X)\quad\text{and}\quad\vartheta^D_X:D\circ\vartheta D(X)\to\vartheta D(X),
\end{equation*}
where~$X$ does now range over linear orders. Together with Theorem~\ref{thm:linearization-preserved} below, the following result provides the connection between Kruskal and Bachmann-Howard derivatives. The theorem extends \cite[Theorem~4.5]{frw-kruskal}, in which $\nu^+_X$ is only constructed for the empty order~$X=0$. Even though the main idea remains the same, we provide full details, as the setting and notation in~\cite{frw-kruskal} are somewhat different.

\begin{theorem}\label{thm:quasi-embeddings-derivatives}
Let $\nu:D\To W\!\restriction\!\lo$ be a quasi embedding of an $\lo$-dilator~$D$ into a normal $\po$-dilator~$W$. There is a unique quasi embedding $\nu^+:\vartheta D\To\mathcal T W\!\restriction\!\lo$ such that the diagram
\begin{equation*}
\begin{tikzcd}[row sep=tiny,column sep=large]
& \vartheta D(X)\ar[dd,"{\nu^+_X}"] & D\circ\vartheta D(X)\ar[l,"{\vartheta^D_X}",swap]\ar[dd,"{W(\nu^+_X)\circ\nu_{\vartheta D(X)}}"]\\
X \ar[ru,"{\iota^D_X}"]\ar[rd,"{\iota^W_X}",swap] & & \\
& \T W(X) & W\circ\T W(X)\ar[l,"{\kappa^W_X}"]
\end{tikzcd}
\end{equation*}
commutes for each linear order~$X$.
\end{theorem}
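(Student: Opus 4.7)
The plan is to define $\nu^+_X$ by recursion on the decomposition of $\vartheta D(X)$ supplied by Theorem~\ref{thm:initial-characterize}(i). That criterion gives $\vartheta D(X)=\rng(\iota^D_X)\cup\rng(\vartheta^D_X)$; this union is automatically disjoint by clause~(ii) of Definition~\ref{def:BH-fp}, both maps are injective by clauses~(i) and~(iii) of the same definition, and the accompanying height function $h:\vartheta D(X)\to\mathbb{N}$ satisfies $h(z)<h(\vartheta^D_X(\sigma))$ whenever $z\in\supp^D_{\vartheta D(X)}(\sigma)$.

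For uniqueness, the commuting diagram forces $\nu^+_X(\iota^D_X(x))=\iota^W_X(x)$ on the first summand and $\nu^+_X(\vartheta^D_X(\sigma))=\kappa^W_X(W(\nu^+_X)(\nu_{\vartheta D(X)}(\sigma)))$ on the second. Writing $\sigma\nf D(\iota_a)(\sigma_0)$ with $a=\supp^D_{\vartheta D(X)}(\sigma)$ and using naturality of $\nu$, we obtain $\nu_{\vartheta D(X)}(\sigma)=W(\iota_a)(\nu_a(\sigma_0))$, so by functoriality of $W$ the right-hand side equals $\kappa^W_X(W(\nu^+_X\circ\iota_a)(\nu_a(\sigma_0)))$ and therefore depends only on $\nu^+_X$ restricted to $a$, all of whose elements have strictly smaller $h$-value than $\vartheta^D_X(\sigma)$. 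Induction on $h$ thus pins down $\nu^+_X$ completely.

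For existence I read the same two clauses as a recursive definition, with the $W$-step interpreted via $W(\nu^+_X\circ\iota_a)$ as above. The subtlety is that $W$ may only be applied to quasi embeddings, so the recursion has to be simultaneous with a verification that the fragment of $\nu^+_X$ built so far is order-reflecting. Because $\vartheta D(X)$ is linear, it suffices to show that $s<_{\vartheta D(X)} t$ rules out $\nu^+_X(t)\leq_{\T W(X)}\nu^+_X(s)$; I prove this by induction on $h(s)+h(t)$, performing a case analysis on the clauses of Definition~\ref{def:fixed-point-syntactic} (equivalently, of the abstract Definition~\ref{def:BH-fp}(iii)) and matching each case against the parallel description of the order on $\T W(X)$ from~\cite{freund-kruskal-gap}. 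The key manoeuvres are: to push $\nu^+_X$ through $\vartheta^D_X$ by the definition, to rewrite $W(\nu^+_X)\circ\nu_{\vartheta D(X)}$ via naturality so that the relevant comparison in $W(\T W(X))$ reduces to one in $W(a\cup b)$ for the supports $a,b$ of the arguments, and then to invoke the induction hypothesis on those supports, whose elements all have smaller $h$-value.

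Naturality of the family $\nu^+$ in $X$ then follows from the uniqueness argument applied fibrewise: for any embedding $f:X\to Y$, both $\T W(f)\circ\nu^+_X$ and $\nu^+_Y\circ\vartheta D(f)$ satisfy the same two recursive clauses on $\vartheta D(X)$, so they coincide. I expect the main obstacle to be the order-reflection step: the orders on $\vartheta D(X)$ and $\T W(X)$ are each assembled from several subcases involving supports and normal forms, and lining them up under $\nu^+_X$—in particular ensuring that the ``gap'' clauses of the two fixed-point orders correspond—will be the substantive bookkeeping. Once that is in place, the remaining verifications (commutativity of the diagram, injectivity and order-reflection on the whole of $\vartheta D(X)$) are routine extensions of the inductive step.
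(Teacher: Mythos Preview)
Your outline matches the paper's proof: recursive definition via the decomposition from Theorem~\ref{thm:initial-characterize}(i), simultaneous verification of order-reflection by induction on $h(s)+h(t)$, and naturality afterwards (the paper does naturality by an explicit induction rather than by your uniqueness argument, but yours works just as well once you check that both composites satisfy the same recursion, which uses naturality of $\iota^W,\kappa^W,\iota^D,\vartheta^D$ and $\nu$).

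Two specific ingredients are doing the real work inside your ``substantive bookkeeping'', and you have not named either of them. First, you need that $\nu$ preserves supports, i.e.\ $\supp^W_X\circ\nu_X=\supp^D_X$; the paper invokes \cite[Lemmas~4.2 and~4.4]{frw-kruskal} for this. Without it you cannot identify $\supp^W_{a\cup b}(W(\iota'_a)(\nu_a(\sigma_0)))$ with the $D$-support $a$, and the induction hypothesis is stated in terms of the latter. Second, the hypothesis that $W$ is \emph{normal} is what makes the crucial case go through: when $\nu^+_X(s)\leq_{\T W(X)}\nu^+_X(t)$ holds via the $W$-comparison $W(\iota'_a)(\nu_a(\sigma_0))\leq_{W(a\cup b)}W(\iota'_b)(\nu_b(\tau_0))$, normality gives for every $r\in a$ some $r'\in b$ with $r\leq r'$, and only then can you conclude $r<_{\vartheta D(X)}\vartheta^D_X(\tau)$ for all $r\in\supp^D(\sigma)$ as clause~(iii) of Definition~\ref{def:BH-fp} requires. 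Your sketch says to ``case-split on Definition~\ref{def:fixed-point-syntactic}'', but in practice you will have to case-split on the $\T W(X)$ side (why does $\nu^+_X(s)\leq\nu^+_X(t)$ hold?), and it is precisely in that analysis that normality and support preservation enter.
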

\begin{proof}
By Lemma~\ref{lem:normal-forms}, each $\sigma\in D\circ\vartheta D(X)$ has a normal form $\sigma\nf D(\iota_a)(\sigma_0)$ with $a\subseteq\vartheta D(X)$ and $\sigma_0\in D(a)$. Note that $\nu_{\vartheta D(X)}\circ D(\iota_a)=W(\iota_a)\circ\nu_a$ holds since $\nu$ is natural. Hence the diagram in the theorem commutes if, and only if, we have
\begin{align*}
\nu^+_X(\iota^D_X(x))&=\iota^W_X(x) &&\text{for }x\in X,\\
\nu^+_X(\vartheta^D_X(\sigma))&=\kappa^W_X\circ W(\nu^+_X\circ\iota_a)\circ\nu_a(\sigma_0) &&\text{for }\sigma\nf D(\iota_a)(\sigma_0).
\end{align*}
The idea is to read these equations as recursive clauses, which is justified as follows: According to Definition~\ref{def:BH-deriv}, the tuple $(\vartheta D(X),\iota^D_X,\vartheta^D_X)$ is an initial Bachmann-Howard fixed point of~$D$ over~$X$. By (the proof of) Theorem~\ref{thm:initial-characterize}, it follows that the functions $\iota^D_X$ and $\vartheta^D_X$ are injective, and that $\vartheta D(X)$ is the disjoint union of their ranges. Furthermore, the same theorem yields a function $h:\vartheta D(X)\to\mathbb N$ such that
\begin{equation*}
s\in\supp^D_{\vartheta D(X)}(\sigma)\quad\To\quad h(s)<h(\vartheta^D_X(\sigma))
\end{equation*}
holds for any element $\sigma\in D\circ\vartheta D(X)$. Here $\supp^D$ is the support that comes with the $\lo$-dilator~$D$ (see Definition~\ref{def:lo-dilator} and the discussion that follows it). Now recall that $\sigma\nf D(\iota_a)(\sigma_0)$ entails $\supp^D_{\vartheta D(X)}(\sigma)=a$, by Lemma~\ref{lem:normal-forms}. This means that the clauses above define $\nu^+_X(s)$ by recursion over~$h(s)$. More precisely, a straightforward induction on~$h(s)$ shows that the value $\nu^+_X(s)$ is uniquely determined, i.\,e., that there is at most one quasi embedding $\nu^+_X$ such that the diagram in the theorem commutes. The proof of existence is somewhat more subtle, since we must simultaneously show that $\nu^+_X$ is a quasi embedding, in order to ensure that $W(\nu^+_X\circ\iota_a)$ is defined. Let us define $l:\vartheta D(X)\to\mathbb N$ by stipulating $l(\iota^D_X(x))=0$ and
\begin{equation*}
l(\vartheta^D_X(\sigma))=1+\textstyle\sum_{s\in a}2\cdot l(s)\quad\text{for}\quad\sigma\nf D(\iota_a)(\sigma_0),
\end{equation*}
which is itself a recursion based on~$h$. By simultaneous induction on the values $l(r)$ and $l(s)+l(t)$ one can now show that $\nu^+_X(r)\in\T W(X)$ is defined and that we have
\begin{equation*}
\nu^+_X(s)\leq_{\T W(X)}\nu^+_X(t)\quad\To\quad s\leq_{\vartheta D(X)}t.
\end{equation*}
To establish this implication, we distinguish cases according to the forms of~$s$ and~$t$. First assume $s=\iota^D_X(x)$ and $t=\iota^D_X(y)$. Then the antecedent of our implication amounts to $\iota^W_X(x)\leq_{\T W(X)}\iota^W_X(y)$. According to~\cite[Definition~3.1]{freund-kruskal-gap} we get $x\leq_X y$. Now $s\leq_{\vartheta D(X)}t$ follows by clause~(i) of Definition~\ref{def:BH-fp} above. By the same definition, we always have $s\leq_{\vartheta D(X)}t$ for terms of the form $s=\iota^D_X(x)$ and $t=\vartheta^D_X(\tau)$. For terms $s=\vartheta^D_X(\sigma)$ and $t=\iota^D_X(y)$, say with $\sigma\nf D(\iota_a)(\sigma_0)$, we have
\begin{equation*}
\nu^+_X(s)=\kappa^W_X\circ W(\nu^+_X\circ\iota_a)\circ\nu_a(\sigma_0)\not\leq_{\T W(X)}\iota^W_X(x)=\nu^+_X(t),
\end{equation*}
again by~\cite[Definition~3.1]{freund-kruskal-gap}. It remains to compare terms $s=\vartheta^D_X(\sigma)$ and $t=\vartheta^D_X(\tau)$. We write
$\sigma\nf D(\iota_a)(\sigma_0)$ and $\tau\nf D(\iota_b)(\tau_0)$ and assume
\begin{equation*}
\kappa^W_X\circ W(\nu^+_X\circ\iota_a)\circ\nu_a(\sigma_0)=\nu^+_X(s)\leq_{\T W(X)}\nu^+_X(t)=\kappa^W_X\circ W(\nu^+_X\circ\iota_b)\circ\nu_b(\tau_0).
\end{equation*}
According to~\cite[Definition~3.1]{freund-kruskal-gap}, this inequality can hold for two different reasons. In the first case we have $\nu^+_X(s)\leq_{\T W(X)}t'$ for some element
\begin{equation*}
t'\in\supp^W_{\T W(X)}\left(W(\nu^+_X\circ\iota_b)\circ\nu_b(\tau_0)\right)=[\nu^+_X\circ\iota_b]^{<\omega}\left(\supp^W_b(\nu_b(\tau_0))\right).
\end{equation*}
By~\cite[Lemmas~4.2 and~4.4]{frw-kruskal} any quasi embedding of an $\lo$-dilator into a $\po$-dilator respects supports. This means that we have $\supp^W_b(\nu_b(\tau_0))=\supp^D_b(\tau_0)=b$, where the last equality comes from the normal form condition (see the paragraph before Definition~\ref{lem:normal-forms}). We can thus write $t'=\nu^+_X(t_0')$ for some $t_0'\in b$. The latter entails that we have $l(t_0')<l(t)$, so that we get $s\leq_{\vartheta D(X)}t_0'$ by induction hypothesis. By Lemma~\ref{lem:normal-forms} we have $b=\supp^D_{\vartheta D(X)}(\tau)$, so that clause~(iii) of Definition~\ref{def:BH-fp} allows us to conclude~$t_0'<_{\vartheta D(X)}\vartheta^D_X(\tau)=t$. Now transitivity yields $s\leq_{\vartheta D(X)}t$, as desired (in fact the inequality is strict in this case). In the remaining case, the above inequality $\nu^+_X(s)\leq_{\T W(X)}\nu^+_X(t)$ holds because we have
\begin{equation*}
W(\nu^+_X\circ\iota_a)\circ\nu_a(\sigma_0)\leq_{W\circ\T W(X)}W(\nu^+_X\circ\iota_b)\circ\nu_b(\tau_0).
\end{equation*}
Let us factor~$\iota_a=\iota_{a\cup b}\circ\iota_a'$ and $\iota_b=\iota_{a\cup b}\circ\iota_b'$ with $\iota_{a\cup b}:a\cup b\hookrightarrow\vartheta D(X)$. The induction hypothesis ensures that $\nu^+_X\circ\iota_{a\cup b}$ is a quasi embedding, which allows us to form the quasi embedding $W(\nu^+_X\circ\iota_{a\cup b})$. The previous inequality thus entails
\begin{equation}\label{eq:ineq-for-normal}
W(\iota_a')\circ\nu_a(\sigma_0)\leq_{W(a\cup b)}W(\iota_b')\circ\nu_b(\tau_0).
\end{equation}
Due to the naturality of~$\nu$, we get $\nu_{a\cup b}\circ D(\iota_a')(\sigma_0)\leq_{W(a\cup b)}\nu_{a\cup b}\circ D(\iota_b')(\tau_0)$. Since $\nu_{a\cup b}$ reflects the order while $D(\iota_{a\cup b})$ preserves it, this entails
\begin{equation*}
\sigma=D(\iota_a)(\sigma_0)=D(\iota_{a\cup b})\circ D(\iota_a')(\sigma_0)\leq_{D\circ\vartheta D(\sigma)}D(\iota_{a\cup b})\circ D(\iota_b')(\tau_0)=\tau.
\end{equation*}
In order to conclude $s=\vartheta^D_X(\sigma)\leq_{\vartheta D(X)}\vartheta^D_X(\tau)=t$ by clause~(iii) of Definition~\ref{def:BH-fp}, it remains to show that we have
\begin{equation*}
r<_{\vartheta D(X)}\vartheta^D_X(\tau)\quad\text{for all }r\in\supp^D_{\vartheta D(X)}(\sigma)=a.
\end{equation*}
Analogous to the above, we get
\begin{equation*}
a=\supp^D_a(\sigma_0)=[\iota_a']^{<\omega}\left(\supp^W_a(\nu_a(\sigma_0))\right)=\supp^W_{a\cup b}\left(W(\iota_a')\circ\nu_a(\sigma_0)\right),
\end{equation*}
as well as $b=\supp^W_{a\cup b}\left(W(\iota_b')\circ\nu_b(\tau_0)\right)$. Due to inequality~(\ref{eq:ineq-for-normal}) and the assumption that~$W$ is normal (cf.~\cite[Definition~2.3]{freund-kruskal-gap}), it follows that any~$r\in a$ admits an $r'\in b$ with $r\leq_{\vartheta D(X)}r'$ (note that the inequality holds in~$\vartheta D(X)$ because $a\cup b$ is a suborder of the latter). Once again, clause~(iii) of Definition~\ref{def:BH-fp} yields $r'<_{\vartheta D(X)}\vartheta^D_X(\tau)$. Now transitivity allows us to conclude $r<_{\vartheta D(X)}\vartheta^D_X(\tau)$, as needed. This completes the simultaneous proof that~$\nu^+_X$ is well defined and a quasi embedding. It remains to show naturality. Given a quasi embedding $f:X\to Y$, we prove
\begin{equation*}
\nu^+_Y\circ\vartheta D(f)(s)=\T W(f)\circ\nu^+_X(s)
\end{equation*}
by induction over~$h(s)$, for $h:\vartheta D(X)\to\mathbb N$ as above. The crucial case concerns a term $s=\vartheta^D_X(\sigma)$, say with $\sigma\nf D(\iota_a)(\sigma_0)$. Using the naturality of $\vartheta^D$, we get
\begin{equation*}
\vartheta D(f)(s)=\vartheta D(f)\circ\vartheta^D_X\circ D(\iota_a)(\sigma_0)=\vartheta^D_Y\circ D(\vartheta D(f)\circ\iota_a)(\sigma_0).
\end{equation*}
In order to apply $\nu^+_Y$ to the expression on the right side, we need to determine the normal form of~$D(\vartheta D(f)\circ\iota_a)(\sigma_0)$. Consider the restriction $\vartheta D(f)\!\restriction\!a:a\to b$ with codomain $b:=[\vartheta D(f)]^{<\omega}(a)$. In view of $\vartheta D(f)\circ\iota_a=\iota_b\circ(\vartheta D(f)\!\restriction\!a)$ we get
\begin{equation*}
D(\vartheta D(f)\circ\iota_a)(\sigma_0)=D(\iota_b)(\sigma_1)\quad\text{for}\quad\sigma_1:=D(\vartheta D(f)\!\restriction\!a)(\sigma_0)\in D(b).
\end{equation*}
This expression is in normal form, since the naturality of supports yields
\begin{equation*}
\supp^D_b(\sigma_1)=[\vartheta D(f)\!\restriction\!a]^{<\omega}\left(\supp^D_a(\sigma_0)\right)=[\vartheta D(f)\!\restriction\!a]^{<\omega}(a)=b,
\end{equation*}
where $\supp^D_a(\sigma_0)=a$ comes from the normal form condition for $\sigma\nf D(\iota_a)(\sigma_0)$. By the recursive definition of~$\nu^+_Y$, we now obtain
\begin{equation*}
\nu^+_Y\circ\vartheta D(f)(s)=\kappa^W_Y\circ W(\nu^+_Y\circ\iota_b)\circ\nu_b(\sigma_1).
\end{equation*}
The naturality of $\nu$ yields $\nu_b(\sigma_1)=W(\vartheta D(f)\!\restriction\!a)\circ\nu_a(\sigma_0)$ and hence
\begin{equation*}
W(\iota_b)\circ\nu_b(\sigma_1)=W\left(\iota_b\circ(\vartheta D(f)\!\restriction\!a)\right)\circ\nu_a(\sigma_0)=W\left(\vartheta D(f)\circ\iota_a\right)\circ\nu_a(\sigma_0).
\end{equation*}
Since $r\in a$ entails $h(r)<h(s)$, we have $\nu^+_Y\circ\vartheta D(f)\circ\iota_a=\T W(f)\circ\nu^+_X\circ\iota_a$ by induction hypothesis. Putting things together, we can finally conclude
\begin{alignat*}{3}
\nu^+_Y\circ\vartheta D(f)(s)&=\kappa^W_Y\circ W(\T W(f)\circ\nu^+_X\circ\iota_a)\circ\nu_a(\sigma_0)&&={}\\
{}&=\T W(f)\circ\kappa^W_X\circ W(\nu^+_X\circ\iota_a)\circ\nu_a(\sigma_0)&&=\T W(f)\circ\nu^+_X(s),
\end{alignat*}
as needed for the inductive proof that $\nu^+$ is natural.
\end{proof}

We now deduce the result that was promised at the end of Section~\ref{sect:BH-derivatives}. Our proof is somewhat indirect but nevertheless instructive, as it connects several fundamental facts. Even though we do not formalize the present paper in a specific meta theory, we point out that the following argument uses $\Pi^1_1$-comprehension, in the form of the minimal bad sequence lemma. This is unavoidable by the results of~\cite{freund-equivalence,freund-computable}.

\begin{corollary}\label{cor:deriv-dilator}
If $D$ is a $\wo$-dilator (i.\,e., preserves well foundedness), then so is its Bachmann-Howard derivative~$\vartheta D$.
\end{corollary}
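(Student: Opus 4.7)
The plan is to reduce the well-foundedness of $\vartheta D(X)$ to the uniform Kruskal theorem by embedding $\vartheta D$ into a Kruskal derivative. The strategy has four steps, and the delicate point is the first.

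First, I would extend $D$ to a normal $\po$-dilator $\widehat D : \po \to \po$ whose restriction to $\lo$ is naturally isomorphic to $D$; I will take $\nu : D \To \widehat D\!\restriction\!\lo$ to be this natural isomorphism (in particular a quasi embedding). The intended construction is the one already used in~\cite{frw-kruskal}: the elements of $\widehat D(X)$ are (representatives of) pairs $(a,\sigma)$ with $a\in[X]^{<\omega}$ a linearly ordered subset of $X$ and $\sigma\in D(a)$ satisfying $\supp^D_a(\sigma)=a$, with a natural order inherited from $D$. The crucial claim is that if $D$ is a $\wo$-dilator then $\widehat D$ is a $\wpo$-dilator; this is precisely where the minimal bad sequence lemma enters, because one must rule out bad sequences in $\widehat D(X)$ by building a minimal one and collapsing it to a bad sequence in some $D(a)$.

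Second, I would invoke Theorem~\ref{thm:quasi-embeddings-derivatives} with $W=\widehat D$ and $\nu$ as above. This yields a natural quasi embedding
\begin{equation*}
\nu^+:\vartheta D\To\mathcal T\widehat D\!\restriction\!\lo,
\end{equation*}
so that for every linear order $X$ the map $\nu^+_X:\vartheta D(X)\to\mathcal T\widehat D(X)$ is order reflecting.

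Third, I would apply the uniform Kruskal theorem from~\cite{frw-kruskal}: since $\widehat D$ is a normal $\wpo$-dilator, its Kruskal derivative $\mathcal T\widehat D$ is again a $\wpo$-dilator. Hence for any well order $X$, the partial order $\mathcal T\widehat D(X)$ is a \textsc{wpo}, and in particular contains no bad sequence. Finally, suppose towards a contradiction that $\vartheta D(X)$ had an infinite descending sequence $s_0>_{\vartheta D(X)}s_1>_{\vartheta D(X)}\cdots$. Since $\vartheta D(X)$ is linear, $i<j$ gives $s_i\not\leq_{\vartheta D(X)}s_j$, and by order reflection $\nu^+_X(s_i)\not\leq_{\mathcal T\widehat D(X)}\nu^+_X(s_j)$, producing a bad sequence in $\mathcal T\widehat D(X)$ and contradicting the \textsc{wpo} property. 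Thus $\vartheta D(X)$ is well founded, as required.

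The main obstacle is step one: constructing $\widehat D$ and verifying that it is a normal $\po$-dilator which preserves \textsc{wpo}s when $D$ preserves well foundedness. The naturality, support condition, and normality of $\widehat D$ are essentially bookkeeping once the right definition is in place, but the \textsc{wpo}-preservation requires the minimal bad sequence lemma; this is exactly the $\Pi^1_1$-comprehension content flagged in the remark preceding the corollary, and it is unavoidable in view of~\cite{freund-equivalence,freund-computable}. Once this extension is established, the remaining ingredients (Theorem~\ref{thm:quasi-embeddings-derivatives} and the uniform Kruskal theorem) assemble the proof mechanically.
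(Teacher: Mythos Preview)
Your proposal is correct and follows essentially the same route as the paper: extend $D$ to a normal $\wpo$-dilator (the paper simply cites this as $W_D$ from~\cite[Section~5]{frw-kruskal}, while you sketch the trace construction explicitly as~$\widehat D$), apply Theorem~\ref{thm:quasi-embeddings-derivatives} to obtain a quasi embedding $\nu^+_X:\vartheta D(X)\to\mathcal T\widehat D(X)$, and conclude well-foundedness of $\vartheta D(X)$ from the \textsc{wpo} property of $\mathcal T\widehat D(X)$. One small misattribution: the minimal bad sequence lemma is not actually needed in your step one (the extension $\widehat D$ and its \textsc{wpo}-preservation are comparatively soft), but rather in step three---the paper invokes it via~\cite[Proposition~2.7]{freund-kruskal-gap}, which establishes that the Kruskal derivative preserves \textsc{wpo}s; this is where the $\Pi^1_1$-strength enters.
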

\begin{proof}
That $\vartheta D$ is an $\lo$-dilator is guaranteed by Definition~\ref{def:BH-deriv} (but see also Proposition~\ref{prop:extend-into-derivative}). It remains to show that $\vartheta D(X)$ is a well order when the same holds for~$X$. According to~\cite[Section~5]{frw-kruskal} there is a quasi embedding~$\nu:D\To W_D\!\restriction\!\lo$ into a normal $\mathsf{WPO}$-dilator~$W_D$. By the previous theorem we get a quasi embedding
\begin{equation*}
\nu^+_X:\vartheta D(X)\to\T W_D(X)
\end{equation*}
for each linear order~$X$. Assume that $X$ is a well order and hence a well partial order. Then $\T W_D(X)$ is also a well partial order, by~\cite[Proposition~2.7]{freund-kruskal-gap} (which is proved by the minimal bad sequence method of Nash-Williams~\cite{nash-williams63}). Given an infinite sequence $s_0,s_1,\dots$ in $\vartheta D(X)$, we get a sequence $\nu^+_X(s_0),\nu^+_X(s_1),\dots$ in~$\T W_D(X)$. Since the latter is a well partial order, there are $i<j$ such that we have $\nu^+_X(s_i)\leq_{\T W_D(X)}\nu^+_X(s_j)$. We can infer $s_i\leq_{\vartheta D(X)}s_j$, as~$\nu^+_X$ is a quasi embedding and hence order reflecting. This shows that~$\vartheta D(X)$ is a well order.
\end{proof}

As an immediate consequence of our general approach, we re-obtain the following known result about the orders $T_n[0]$ from~\cite[Section~2.3.3]{MRW-linear}.

\begin{corollary}\label{cor:collapsing-well-founded}
The order~$T_n[0]$ is well founded for each $n\in\mathbb N$.
\end{corollary}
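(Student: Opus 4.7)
The plan is to deduce this as an immediate consequence of Corollary~\ref{cor:deriv-dilator} together with the results of Section~\ref{sect:collapsing-functions}, by induction on~$n$.

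First I would reduce to a statement about $\textsf{T}^0_n(1)$: by Corollary~\ref{cor:coincide-mrw-linear}, the order $T_n[0]$ coincides with $\textsf{T}^0_n(1)$, where $1$ is the order with a single element. Since $1$ is trivially a well order, it suffices to show that $\textsf{T}^0_n$ sends well orders to well orders, i.e.\ that each $\textsf{T}^0_n$ is a $\wo$-dilator.

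I would prove this by induction on~$n$. For the base case, $\textsf{T}^0_0$ is the identity functor on~$\lo$ (with supports $\supp_X(x)=\{x\}$), which is evidently a $\wo$-dilator. For the induction step, assume that $\textsf{T}^0_n$ is a $\wo$-dilator. By Theorem~\ref{thm:T-n+1-deriv}, $\textsf{T}^0_{n+1}$ is the Bachmann-Howard derivative of~$\textsf{T}^0_n$. Hence Corollary~\ref{cor:deriv-dilator} applies and yields that $\textsf{T}^0_{n+1}$ is a $\wo$-dilator as well. Applying the conclusion to the well order~$1$ gives that $\textsf{T}^0_n(1)\cong T_n[0]$ is a well order, in particular well founded.

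There is no real obstacle here: all the work has been done in Corollary~\ref{cor:deriv-dilator} (whose proof rests on the quasi embedding into the Kruskal derivative of a normal $\mathsf{WPO}$-dilator and the minimal bad sequence method) and in Theorem~\ref{thm:T-n+1-deriv} (which identifies $\textsf{T}^0_{n+1}$ as the Bachmann-Howard derivative of~$\textsf{T}^0_n$). The corollary is just the iterated application of these two facts along the recursive definition of~$\textsf{T}^0_n$.
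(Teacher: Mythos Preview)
Your proposal is correct and follows essentially the same approach as the paper's own proof: reduce to showing that each $\te^0_n$ is a $\wo$-dilator, then argue by induction on~$n$, with the base case being the identity dilator and the step supplied by Theorem~\ref{thm:T-n+1-deriv} together with Corollary~\ref{cor:deriv-dilator}. The only difference is that you spell out a few details (citing Corollary~\ref{cor:coincide-mrw-linear} explicitly and noting that $1$ is a well order) that the paper leaves implicit.
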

\begin{proof}
In the previous section we have studied $\lo$-dilators~$\te^0_n$ with $T_n[0]\cong\te^0_n(1)$. It suffices to show that these are $\mathsf{WO}$-dilators. We argue by induction on~$n$. In the base we need only observe $\te^0_n(X)\cong X$. The step is covered by the previous corollary, as $\te^0_{n+1}$ is the Bachmann-Howard derivative of~$\te^0_n$, by Theorem~\ref{thm:T-n+1-deriv}.
\end{proof}

Let $f:X\to Y$ be a quasi embedding between partial orders. If $X$ is linear and $f$ is surjective, then we call $f$ a linearization (of~$Y$ by~$X$). This coincides with the usual notion if we identify~$X$ with its image under~$f$. Linearizations are particularly important, as they are related to maximal order types and independence results (see e.\,g.~\cite{deJongh-Parikh,simpson85,aschenbrenner-pong,knight-lange}). We introduce the corresponding functorial notion:

\begin{definition}\label{def:linearization-dil}
Consider an $\lo$-dilator~$D$ and a $\po$-dilator~$W$. A quasi embedding $\nu:D\To W\!\restriction\!\lo$ is called a linearization (of~$W$ by~$D$) if $\nu_X:D(X)\to W(X)$ is surjective for each linear order~$X$.
\end{definition}

Our next goal is to identify a condition which ensures that the quasi embedding $\nu^+:\vartheta D\To\mathcal T W\!\restriction\!\lo$ from Theorem~\ref{thm:quasi-embeddings-derivatives} is a linearization. As the following example shows, the assumption that $\nu:D\To W\!\restriction\!\lo$ is a linearization does not suffice.

\begin{example}
We define a transformation $X\mapsto W(X)$ of partial orders by
\begin{gather*}
W(X)=\{(x,x')\in X^2\,|\,x'\not\leq_Xx\},\\
(x,x')\leq_{W(X)}(y,y')\quad\Leftrightarrow\quad x\leq_X y\text{ and }x'\leq_X y'.
\end{gather*}
If $f:X\to Y$ is a quasi embedding, then $x'\not\leq_Xx$ implies $f(x')\not\leq_Yf(x)$. We thus get a function $W(f):W(X)\to W(Y)$ by setting $W(f)(x,x')=(f(x),f(x'))$. Let us also define functions $\supp^W_X:W(X)\to[X]^{<\omega}$ by putting $\supp^W_X(x,x')=\{x,x'\}$. It is straightforward to verify that this data constitutes a normal $\po$-dilator. Here it is crucial that the support condition
\begin{equation*}
\supp^W_Y(\sigma)\subseteq\rng(f)\quad\To\quad\sigma\in\rng(W(f))
\end{equation*}
is only required when $f:X\to Y$ is an embedding, rather than just a quasi embedding (see \cite[Definition~2.1]{freund-kruskal-gap}). In the present case, the support condition does in fact imply that $f$ is order preserving. For each linear order~$X$, we put
\begin{gather*}
D(X)=\{(x,x')\in X^2\,|\,x<_Xx'\},\\
(x,x')\leq_{D(X)}(y,y')\quad\Leftrightarrow\quad x<_X y\text{ or }(x=y\text{ and }x'\leq_X y').
\end{gather*}
Given that~$X$ is linear, the underlying sets of $D(X)$ and $W(X)$ are equal. We may thus declare that $D(f):D(X)\to D(Y)$ and $\supp^D_X:D(X)\to[X]^{<\omega}$ coincide with $W(f)$ and $\supp^W_X$, respectively, for any embedding $f:X\to Y$ of linear orders. It is easy to see that this yields an $\lo$-dilator. The identity maps $\nu_X:D(X)\to W(X)$ constitute a linearization $\nu:D\To W\!\restriction\!\lo$. Theorem~\ref{thm:quasi-embeddings-derivatives} gives rise to a quasi embedding $\nu^+:\vartheta D\To\T W\!\restriction\!\lo$, which we shall now describe concretely. For each partial order~$X$, we consider a collection $\mathcal B(X)$ of structured binary trees with leaf labels from~$X$, which is recursively generated as follows:
\begin{itemize}[label={--}]
\item For each $x\in X$ we have an element~$\overline x\in\mathcal B(X)$ (a single node with label~$x$).
\item Given $s,s'\in\mathcal B(X)$ we add an element $\circ(s,s')\in\mathcal B(X)$ (the tree in which the root has immediate subtrees~$s$ and~$s'$).
\end{itemize}
To extend $\mathcal B$ into a functor (currently on sets but later on orders), we stipulate
\begin{equation*}
\mathcal B(f)(\overline x)=\overline{f(x)}\quad\text{and}\quad\mathcal B(f)(\circ(s,s'))=\circ(\mathcal B(f)(s),\mathcal B(f)(s')).
\end{equation*}
We also define support functions $\supp^{\mathcal B}_X:\mathcal B(X)\to[X]^{<\omega}$ by setting
\begin{equation*}
\supp^{\mathcal B}_X(\overline x)=\{x\}\quad\text{and}\quad\supp^{\mathcal B}_X(\circ(s,s'))=\supp^{\mathcal B}_X(s)\cup\supp^{\mathcal B}_X(s').
\end{equation*}
To describe the Kruskal derivative $\T W$ of~$W$, we first define a partial order~$\leq^W_{\mathcal B(X)}$ on $\mathcal B(X)$. The latter coincides with the usual embeddability relation for labelled trees, which can be given by the following recursive clauses:
\begin{enumerate}[label=(\roman*)]
\item If we have $x\leq_X y$, then we have $\overline x\leq^W_{\mathcal B(X)}\overline y$.
\item We have $s\leq^W_{\mathcal B(X)}\circ(s,s')$ and $s'\leq^W_{\mathcal B(X)}\circ(s,s')$.
\item If we have $s\leq^W_{\mathcal B(X)}t$ and $s'\leq^W_{\mathcal B(X)}t'$, then we have $\circ(s,s')\leq^W_{\mathcal B(X)}\circ(t,t')$.
\end{enumerate}
One can check that this turns $\mathcal B$ into a normal $\po$-dilator. Indeed, $\mathcal B$ is the Kruskal derivative of $X\mapsto X^2$, where the order on~$X^2$ extends the one on $W(X)$ in the obvious way. In view of $W(X)\subsetneq X^2$, we now define $\T W(X)\subseteq\mathcal B(X)$ as the smallest suborder that contains all elements~$\overline x$ and contains $\circ(s,s')$ for any $s,s'\in\T W(X)$ with $s'\not\leq^W_{\mathcal B(X)} s$. We can turn $\T W$ into a normal $\po$-dilator by restricting the constructions from above. Let us define
\begin{equation*}
\iota^W_X:X\to\T W(X)\quad\text{and}\quad\kappa^W_X:W\circ\T W(X)\to\T W(X)
\end{equation*}
by setting $\iota^W_X(x)=\overline x$ and $\kappa^W_X(s,s')=\circ(s,s')$. As one readily verifies, these functions witness that $\T W$ is the Kruskal derivative of~$W$ (so that the notation $\T W$ is indeed justified). To describe the Bachmann-Howard derivative $\vartheta D$ of~$D$, we first refine $\leq^W_{\mathcal B(X)}$ into a linear order $\leq_{\mathcal B(X)}^D$ on $\mathcal B(X)$. The latter is characterized by the following clauses together with (i-iii) above (with $\leq^D_{\mathcal B(X)}$ at the place of $\leq^W_{\mathcal B(X)}$):
\begin{enumerate}[label=(\roman*)]\setcounter{enumi}{3}
\item We have $\overline x<_{\mathcal B(X)}^D\circ(t,t')$ for any terms of the given forms.
\item If we have $s<_{\mathcal B(X)}^Dt$ and $s'<_{\mathcal B(X)}^D\circ(t,t')$, we have $\circ(s,s')<^D_{\mathcal B(X)}\circ(t,t')$.
\end{enumerate}
Analogous to the above, let $\vartheta D(X)\subseteq\mathcal B(X)$ be the smallest (linear) suborder that contains all elements~$\overline x$ and contains $\circ(s,s')$ for any $s,s'\in\vartheta D(X)$ with $s<^D_{\mathcal B(X)} s'$. To turn $\vartheta D$ into an $\lo$-dilator, it suffices to restrict the functions $\mathcal B(f)$ and $\supp^{\mathcal B}_X$ from above. We define
\begin{equation*}
\iota^D_X:X\to\vartheta D(X)\quad\text{and}\quad\vartheta^D_X:D\circ\vartheta D(X)\to\vartheta D(X)
\end{equation*}
as the restrictions of $\iota^W_X$ and $\kappa^W_X$, i.\,e.~by $\iota^D_X(x)=\overline x$ and $\vartheta^D_X(s,s')=\circ(s,s')$. These functions witness that $\vartheta D$ is the Bachmann-Howard derivative of~$D$. Since $\leq^D_{\mathcal B(X)}$ refines $\leq^W_{\mathcal B(X)}$, we have $\vartheta D(X)\subseteq\T W(X)$. One can verify that the inclusion maps
\begin{equation*}
\nu^+_X:\vartheta D(X)\hookrightarrow\T W(X)
\end{equation*}
satisfy the conditions from Theorem~\ref{thm:quasi-embeddings-derivatives}, which characterize $\nu^+$ uniquely. We now consider $X=\{0,1,2\}$ with the usual linear order. The element~$\circ(\circ(\overline 0,\overline 1),\overline 2)\in\mathcal B(X)$ is contained in $\T W(X)$ but not in $\vartheta D(X)$, so that $\nu^+_X$ is not surjective. This means that $\nu^+:\vartheta D\To\T W\!\restriction\!\lo$ is no linearization, even though $\nu:D\To W\!\restriction\!\lo$ is one.
\end{example}

The previous example suggests the following notion:

\begin{definition}\label{def:dil-flat}
A $\po$-dilator~$W$ is called flat if the support condition
\begin{equation*}
\supp^W_Y(\sigma)\subseteq\rng(f)\quad\To\quad\sigma\in\rng(W(f))
\end{equation*}
holds for any quasi embedding $f:X\to Y$ between partial orders (recall that \cite[Definition~2.1]{freund-kruskal-gap} does only require this condition for embeddings).
\end{definition}

As expected, we get the following (cf.~Theorem~\ref{thm:quasi-embeddings-derivatives}):

\begin{theorem}\label{thm:linearization-preserved}
Consider a linearization $\nu:D\To W\!\restriction\!\lo$ of a normal $\po$-dilator~$W$ by an $\lo$-dilator~$D$. If $W$ is flat, then $\nu^+:\vartheta D\To\T W\!\restriction\!\lo$ is again a linearization.
\end{theorem}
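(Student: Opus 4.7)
The plan is to establish surjectivity of $\nu^+_X:\vartheta D(X)\to\T W(X)$ for every linear order~$X$, since Theorem~\ref{thm:quasi-embeddings-derivatives} already provides that $\nu^+$ is a quasi embedding. I would proceed by induction on a height function on $\T W(X)$ that is parallel to the one on $\vartheta D(X)$ supplied by Theorem~\ref{thm:initial-characterize}: by the analogous initiality theory for Kruskal fixed points developed in~\cite[Section~3]{freund-kruskal-gap}, one has $\T W(X)=\rng(\iota^W_X)\cup\rng(\kappa^W_X)$ together with a function $h^W:\T W(X)\to\mathbb N$ such that $r\in\supp^W_{\T W(X)}(\tau)$ implies $h^W(r)<h^W(\kappa^W_X(\tau))$.

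The base case $t=\iota^W_X(x)$ is immediate from the left triangle of the diagram in Theorem~\ref{thm:quasi-embeddings-derivatives}, which yields $t=\nu^+_X(\iota^D_X(x))$. For the inductive step, given $t=\kappa^W_X(\tau)$ with $\tau\in W\circ\T W(X)$, let $a:=\supp^W_{\T W(X)}(\tau)$ and use the support condition for the $\po$-dilator~$W$ (applied to the inclusion $\iota_a:a\hookrightarrow\T W(X)$) to write $\tau=W(\iota_a)(\tau_0)$ for some~$\tau_0\in W(a)$. Each $r\in a$ satisfies $h^W(r)<h^W(t)$, so the induction hypothesis supplies $s_r\in\vartheta D(X)$ with $\nu^+_X(s_r)=r$. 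Collect these into the finite linear suborder $a':=\{s_r\,|\,r\in a\}\subseteq\vartheta D(X)$, and form the composite $f:=\nu^+_X\circ\iota_{a'}:a'\to\T W(X)$, which is a quasi embedding whose range contains~$a$. Applying the flatness of~$W$ to~$f$ gives $\tau=W(f)(\tau_1)$ for some $\tau_1\in W(a')$. Because $\nu$ is a linearization and $a'$ is linear, $\nu_{a'}:D(a')\to W(a')$ is surjective, so I can pick $\sigma_1\in D(a')$ with $\nu_{a'}(\sigma_1)=\tau_1$. Setting $\sigma:=D(\iota_{a'})(\sigma_1)$ and $s:=\vartheta^D_X(\sigma)$, naturality of~$\nu$ together with the commutative diagram from Theorem~\ref{thm:quasi-embeddings-derivatives} yields
\begin{equation*}
\nu^+_X(s)=\kappa^W_X\circ W(\nu^+_X\circ\iota_{a'})(\tau_1)=\kappa^W_X\circ W(f)(\tau_1)=\kappa^W_X(\tau)=t,
\end{equation*}
which closes the induction.

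The main obstacle I anticipate is precisely the step invoking flatness. Since $\nu^+_X$ is only a quasi embedding and not in general order preserving, the map $f=\nu^+_X\circ\iota_{a'}$ need not be an order embedding, so the standard support condition built into Definition~\ref{def:lo-dilator} would be too weak to factor $\tau$ through $W(a')$. The flatness hypothesis (Definition~\ref{def:dil-flat}) is tailored to overcome exactly this difficulty, and the example immediately preceding the theorem confirms that without flatness the surjectivity conclusion genuinely fails. Once flatness is in hand, the remainder of the argument is bookkeeping with naturality squares and the commutativity asserted by Theorem~\ref{thm:quasi-embeddings-derivatives}.
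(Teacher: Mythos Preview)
Your proposal is correct and follows essentially the same approach as the paper: induction on the height function for the initial Kruskal fixed point $\T W(X)$, with the base case handled by the left triangle of the diagram and the inductive step by flatness plus surjectivity of~$\nu$. The paper's version is slightly more streamlined in that it applies flatness directly to the quasi embedding $\nu^+_X:\vartheta D(X)\to\T W(X)$ (obtaining $\tau\in\rng(W(\nu^+_X))$ and then pulling back via the surjection $\nu_{\vartheta D(X)}$), whereas you first pass to the finite suborder~$a'$ and apply flatness to $\nu^+_X\circ\iota_{a'}$; your intermediate factorization $\tau=W(\iota_a)(\tau_0)$ is never actually used and can be dropped.
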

\begin{proof}
Given a linear order~$X$, we need to show that $\nu^+_X:\vartheta D(X)\to\T W(X)$ is surjective. According to~\cite[Definition~4.1]{freund-kruskal-gap}, the order~$T W(X)$ comes with functions $\iota^W_X:X\to\T W(X)$ and $\kappa^W_X:W\circ\T W(X)\to\T W(X)$ that turn it into an initial Kruskal fixed point. By~\cite[Theorem~3.5]{freund-kruskal-gap} we get a function $h:\T W(X)\to\mathbb N$ with
\begin{equation*}
r\in\supp^W_{\T W(X)}(\sigma)\quad\To\quad h(r)<h(\kappa^W_X(\sigma)).
\end{equation*}
For $s\in\T W(X)$ we shall now show $s\in\rng(\nu^+_X)$ by induction on~$h(s)$. Also by~\cite[Theorem~3.5]{freund-kruskal-gap}, it suffices to consider the following two cases: First, let us look at an element~$s=\iota^W_X(x)$. Here we obtain $s=\nu^+_X(\iota^D_X(x))\in\rng(\nu^+_X)$ by the diagram in Theorem~\ref{thm:quasi-embeddings-derivatives}. Secondly, consider $s=\kappa^W_X(\sigma)$. The induction hypothesis yields
\begin{equation*}
\supp^W_{\T W(X)}(\sigma)\subseteq\rng(\nu^+_X).
\end{equation*}
Let us recall that $\nu^+_X$ is a quasi embedding but no embedding (unless~$\T W(X)$ is linear). Hence the support condition from \cite[Definition~2.1]{freund-kruskal-gap} does not apply. But since~$W$ is flat we get $\sigma\in\rng(W(\nu^+_X))$ anyway. We also know that $\nu_{\vartheta D(X)}$ is surjective, by the assumption that $\nu$ is a linearization. Thus we may write
\begin{equation*}
\sigma=W(\nu^+_X)\circ\nu_{\vartheta D(X)}(\sigma_0)\quad\text{for some}\quad\sigma_0\in D\circ\vartheta D(X).
\end{equation*}
Due to the diagram in Theorem~\ref{thm:quasi-embeddings-derivatives}, we then obtain
\begin{equation*}
s=\kappa^W_X(\sigma)=\nu^+_X\circ\vartheta^D_X(\sigma_0)\in\rng(\nu^+_X),
\end{equation*}
as needed to complete the inductive proof.
\end{proof}

In the following section we will construct sequences with gap condition via iterated Kruskal derivatives. The following result will ensure that all $\po$-dilators in this construction are flat.

\begin{proposition}\label{prop:flat-derivative}
We consider a normal $\po$-dilator~$W$. If $W$ is flat, then so is its Kruskal derivative~$\T W$.
\end{proposition}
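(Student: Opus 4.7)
The plan is to mimic the argument for Proposition~\ref{prop:extend-into-derivative} on the partial side, exploiting the inductive structure of $\T W(Y)$. Let $f\colon X\to Y$ be a quasi embedding of partial orders and let $s\in\T W(Y)$ satisfy $\supp^{\T W}_Y(s)\subseteq\rng(f)$. We want $s\in\rng(\T W(f))$. Recall from the analogue of Theorem~\ref{thm:initial-characterize} for Kruskal derivatives that $\T W(Y)$ is the disjoint union $\rng(\iota^W_Y)\cup\rng(\kappa^W_Y)$ and that there is a height function $h\colon\T W(Y)\to\mathbb N$ with $r\in\supp^W_{\T W(Y)}(\sigma)\To h(r)<h(\kappa^W_Y(\sigma))$. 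This lets us argue by induction on~$h(s)$, using (as in the argument for Proposition~\ref{prop:extend-into-derivative}) the recursive description of the supports
\begin{equation*}
\supp^{\T W}_Y(\iota^W_Y(y))=\{y\},\qquad\supp^{\T W}_Y(\kappa^W_Y(\sigma))=\bigcup\{\supp^{\T W}_Y(r)\,|\,r\in\supp^W_{\T W(Y)}(\sigma)\}.
\end{equation*}

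In the base case $s=\iota^W_Y(y)$ we have $\{y\}=\supp^{\T W}_Y(s)\subseteq\rng(f)$, so $y=f(x)$ for some $x\in X$, and naturality of~$\iota^W$ yields $s=\T W(f)(\iota^W_X(x))$, as required. In the step $s=\kappa^W_Y(\sigma)$, the displayed recursion shows that every $r\in\supp^W_{\T W(Y)}(\sigma)$ satisfies $\supp^{\T W}_Y(r)\subseteq\rng(f)$; since $h(r)<h(s)$, the induction hypothesis gives $r\in\rng(\T W(f))$. Hence $\supp^W_{\T W(Y)}(\sigma)\subseteq\rng(\T W(f))$.

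Here is the point at which flatness of~$W$ is used, and this is the only mildly delicate step: the map $\T W(f)\colon\T W(X)\to\T W(Y)$ is a quasi embedding of partial orders, not in general an embedding, so we cannot invoke the bare support condition of Definition~2.1 of \cite{freund-kruskal-gap}. Flatness (Definition~\ref{def:dil-flat}) is precisely what allows us to conclude $\sigma\in\rng(W(\T W(f)))$, say $\sigma=W(\T W(f))(\sigma_0)$ with $\sigma_0\in W\circ\T W(X)$. Naturality of~$\kappa^W$ then gives
\begin{equation*}
s=\kappa^W_Y(\sigma)=\kappa^W_Y\circ W(\T W(f))(\sigma_0)=\T W(f)\circ\kappa^W_X(\sigma_0)\in\rng(\T W(f)),
\end{equation*}
completing the induction and hence the proof that $\T W$ is flat.
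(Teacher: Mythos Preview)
Your proof is correct and follows essentially the same approach as the paper: induction on the height function $h$ provided by the initial Kruskal fixed point structure of $\T W(Y)$, handling the two cases $s=\iota^W_Y(y)$ and $s=\kappa^W_Y(\sigma)$ via the recursive description of $\supp^{\T W}$, and invoking flatness of~$W$ precisely because $\T W(f)$ is only a quasi embedding. The paper phrases the naturality steps as commutativity of the diagram in \cite[Definition~4.1]{freund-kruskal-gap}, but this is the same content as your appeal to naturality of $\iota^W$ and $\kappa^W$.
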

\begin{proof}
Given a quasi embedding~$f:X\to Y$, we need to prove the support condition
\begin{equation*}
\supp^{\T W}_Y(s)\subseteq\rng(f)\quad\To\quad s\in\rng(\T W(f))
\end{equation*}
for arbitrary~$s\in\T W(Y)$. Similarly to the previous proof, this can be achieved by induction on~$h(s)$, where $h:\T W(Y)\to\mathbb N$ with
\begin{equation*}
r\in\supp^W_{\T W(Y)}(\sigma)\quad\To\quad h(r)<h(\kappa^W_Y(\sigma))
\end{equation*}
is provided by~\cite[Theorem~3.5]{freund-kruskal-gap}. An element $s=\iota^W_Y(y)$ has support $\{y\}$, by the proof of~\cite[Theorem~4.2]{freund-kruskal-gap}. Given that the premise of the support condition holds, we thus get $y=f(x)$ for some~$x\in X$. The diagram in~\cite[Definition~4.1]{freund-kruskal-gap} yields
\begin{equation*}
s=\iota_Y\circ f(x)=\T W(f)\circ\iota_X(x)\in\rng(\T W(f)),
\end{equation*}
as required. It remains to consider an element $s=\kappa^W_Y(\sigma)$. Here we have
\begin{equation*}
\supp^{\T W}_Y(s)=\bigcup\{\supp^{\T W}_X(r)\,|\,r\in\supp^W_{\T W(X)}(\sigma)\},
\end{equation*}
again by the proof of~\cite[Theorem~4.2]{freund-kruskal-gap}. Assuming the premise of the support condition for~$s$, we can thus invoke the induction hypothesis to get
\begin{equation*}
\supp^W_{\T W(X)}(\sigma)\subseteq\rng(\T W(f)).
\end{equation*}
Using the assumption that~$W$ is flat, we obtain
\begin{equation*}
\sigma=W\circ\T W(f)(\sigma_0)\quad\text{for some}\quad\sigma_0\in W\circ\T W(X).
\end{equation*}
Now the diagram in~\cite[Definition~4.1]{freund-kruskal-gap} yields
\begin{equation*}
s=\kappa^W_Y(\sigma)=\T W(f)\circ\kappa^W_X(\sigma_0)\in\rng(\T W(f)),
\end{equation*}
as needed for the inductive proof that $\T W$ is flat.
\end{proof}

\section{Application, part~2: linearizing the gap condition for sequences}\label{sect:application-2}

As shown in Section~\ref{sect:collapsing-functions}, one can reconstruct certain collapsing functions by taking iterated Bachmann-Howard derivatives (of dilators on linear orders). In the present section we show that sequences with Friedman's gap condition arise from a completely parallel construction in terms of Kruskal derivatives (of dilators on partial orders). This confirms that there is an extremely tight connection between collapsing functions and the gap condition. Also in this section, we give a second iterative construction that yields the same gap condition but different collapsing functions. This explains a phenomenon from~\cite{MRW-linear}, as discussed in the introduction of the present paper.

\begin{definition}\label{def:se-order}
For each partial order~$X$, let $\se(X)$ and $S:\se(X)\to\mathbb N\cup\{-1\}$ be generated by clauses~(i) and~(ii) from Definition~\ref{def:term-system-T} (with $\se$ at the place of~$\te$, so that $\se(X)=\te(X)$ when $X$ is linear). Also, let $k_i:\se(X)\to\se(X)$ for $i\in\mathbb N\cup\{-1\}$ be given as in the cited definition. To define a binary relation $\leq_{\se(X)}$ on~$\se(X)$, we declare that $s\leq_{\se(X)}t$ holds if, and only if, one of the following clauses is satisfied:
\begin{enumerate}[label=(\roman*')]
\item We have $s=\overline x$ and $t=\overline y$ with $x\leq_X y$.
\item We have $t=\vartheta_jt'$ with $s\leq_{\se(X)}k_j(t')$ (where $s$ can be of the form $\overline x$ or $\vartheta_is'$).
\item We have $s=\vartheta_is'$ and $t=\vartheta_it'$ (for the same~$i$) with $s'\leq_{\se(X)}t'$.
\end{enumerate}
Furthermore, let~$\se_n(X)\subseteq\se(X)$ consist of the terms with indices $i<n$ only, and put~$\se^0_n(X)=\{s\in\se_n(X)\,|\,S(s)\leq 0\}$ (cf.~Definition~\ref{def:T_n}). We will always consider these subsets with (the restrictions of) the relation~$\leq_{\se(X)}$ (sometimes written as~$\leq$).
\end{definition}

Let $h:\se(X)\to\mathbb N$ be given as in the paragraph after Definition~\ref{def:term-system-T}. Once again we have $h(k_i(s))\leq h(s)$, so that $s\leq_{\se(X)}t$ can be decided by recursion on~$h(s)+h(t)$. Let us begin with a preparatory result:

\begin{lemma}\label{lem:k_i-monotone}
If we have $s\leq_{\se(X)}t$, then we get $k_i(s)\leq_{\se(X)}k_i(t)$ for any $i\in\mathbb N$.
\end{lemma}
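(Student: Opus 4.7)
The plan is to argue by induction on $h(s) + h(t)$, where $h:\se(X)\to\mathbb N$ is the length function introduced in the paragraph after Definition~\ref{def:term-system-T}, splitting on which of the clauses (i'), (ii'), (iii') in Definition~\ref{def:se-order} witnesses the hypothesis $s\leq_{\se(X)}t$. Since $h(k_j(u))\leq h(u)$, every recursive call used below stays at strictly smaller combined length than $h(s)+h(t)$.

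As preparation I would first record the auxiliary identity $k_i\circ k_j = k_{\min(i,j)}$, proved by a short induction on its argument: $k_j(u)$ is either $\overline x$ or of the form $\vartheta_l r$ with $l\leq j$, so when $j\leq i$ a subsequent application of $k_i$ changes nothing, while when $j>i$ every outer $\vartheta$ that $k_j$ strips would already have been stripped by $k_i$ alone. Downstream I only need the two consequences $k_i(k_j(u))=k_j(u)$ when $j\leq i$, and $k_i(k_j(u))=k_i(u)$ when $j>i$.

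Clause (i') is immediate, since $k_i$ fixes constants $\overline x$. For clause (ii'), write $t=\vartheta_jt'$ with $s\leq_{\se(X)} k_j(t')$ and apply the induction hypothesis to obtain $k_i(s)\leq_{\se(X)} k_i(k_j(t'))$. If $j\leq i$ then $k_i(t)=\vartheta_jt'$ and the auxiliary identity simplifies the conclusion to $k_i(s)\leq_{\se(X)} k_j(t')$, so a fresh application of clause (ii') yields $k_i(s)\leq_{\se(X)}\vartheta_jt'=k_i(t)$. If $j>i$ then $k_i(t)=k_i(t')$ and $k_i(k_j(t'))=k_i(t')$, so the induction hypothesis already delivers the conclusion directly. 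Clause (iii') is handled analogously: for $s=\vartheta_js'$ and $t=\vartheta_jt'$ with $s'\leq_{\se(X)}t'$, either $j\leq i$ (both terms are fixed by $k_i$) or $j>i$, in which case $k_i(s)=k_i(s')$ and $k_i(t)=k_i(t')$, and the conclusion follows from the induction hypothesis applied to $s'\leq_{\se(X)}t'$.

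I do not expect any substantial obstacle. The only mildly delicate step is the $j\leq i$ subcase of clause (ii'): there the induction hypothesis is not itself the desired conclusion, and one has to re-invoke clause (ii') after normalising the right-hand side via the auxiliary identity. Everything else is routine case analysis, made uniform by the fact that in each of the six subcases $k_i$ either fixes the relevant subterm or descends into a proper subterm of strictly smaller $h$-value.
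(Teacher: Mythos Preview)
Your proposal is correct and follows essentially the same route as the paper: induction on $h(s)+h(t)$, case split on the clause witnessing $s\leq_{\se(X)}t$, and in clause~(ii') a sub-split on $j\leq i$ versus $j>i$ using the identities $k_i(k_j(t'))=k_j(t')$ and $k_i(k_j(t'))=k_i(t')$, respectively. The only cosmetic difference is that you isolate the general identity $k_i\circ k_j=k_{\min(i,j)}$ up front, whereas the paper simply invokes its two special cases inline.
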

\begin{proof}
We argue by induction on~$h(s)+h(t)$. First, assume that $s\leq\vartheta_jt'=t$ holds because of~$s\leq k_j(t')$. We then get $k_i(s)\leq k_i(k_j(t'))$ by induction hypothesis. For $j\leq i$ we have $k_i(k_j(t'))=k_j(t')$, so that we obtain $k_i(s)\leq k_j(t')$ and then
\begin{equation*}
k_i(s)\leq\vartheta_jt'=k_i(\vartheta_jt')=k_i(t).
\end{equation*}
For $j>i$ we get $k_i(k_j(t'))=k_i(t')$ and thus $k_i(s)\leq k_i(t')=k_i(t)$. Secondly, assume that $s=\vartheta_js'\leq\vartheta_jt'=t$ holds because of $s'\leq t'$. For $i<j$ we have $k_i(s)=k_i(s')$ and $k_i(t)=k_i(t')$, so that it suffices to invoke the induction hypothesis. For $i\geq j$ we have $k_i(s)=s$ and $k_i(t)=t$, which means that the claim is trivial.
\end{proof}

The following is known for~$X=1=\{0\}$ (see e.\,g.~\cite{MRW-linear}), but the author has not found a reference for the (straightforward) generalization.

\begin{lemma}\label{lem:se-partial-order}
The relation $\leq_{\se(X)}$ on $\se(X)$ is a partial order.
\end{lemma}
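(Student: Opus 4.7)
The plan is to verify reflexivity, transitivity, and antisymmetry in turn, using the height function $h:\se(X)\to\mathbb N$ with $h(\overline x)=0$ and $h(\vartheta_i s)=h(s)+1$, together with the easy observation $h(k_i(s))\leq h(s)$, as the induction measure throughout. Reflexivity is a straightforward induction on $h(s)$: for $s=\overline x$ apply clause~(i') with $x\leq_X x$, and for $s=\vartheta_i s'$ apply clause~(iii') using $s'\leq_{\se(X)} s'$ from the inductive hypothesis.

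For transitivity I would prove $s\leq t\leq u\Rightarrow s\leq u$ by induction on $h(s)+h(t)+h(u)$. The only interesting shape is $u=\vartheta_l u'$. If $t\leq u$ is obtained by (ii'), so $t\leq k_l(u')$, then the inductive hypothesis applied to the chain $s\leq t\leq k_l(u')$ (which has smaller total height since $h(k_l(u'))\leq h(u')<h(u)$) gives $s\leq k_l(u')$, from which $s\leq u$ follows by (ii'). If $t\leq u$ is by (iii'), then $t=\vartheta_l t'$ with $t'\leq u'$; one then inspects how $s\leq t$ was derived. When it is by (iii'), write $s=\vartheta_l s'$ with $s'\leq t'$ and apply the inductive hypothesis to $s'\leq t'\leq u'$. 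When it is by (ii'), so $s\leq k_l(t')$, use Lemma~\ref{lem:k_i-monotone} to obtain $k_l(t')\leq k_l(u')$, apply the inductive hypothesis to $s\leq k_l(t')\leq k_l(u')$, and finish via (ii').

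The heart of the proof is antisymmetry. I would first establish the following monotonicity principle: any derivation of $s\leq t$ produces $h(s)\leq h(t)$, and the inequality is strict as soon as clause (ii') is used \emph{anywhere} in the derivation tree. This is shown by induction on the derivation: in case (i') both heights are $0$; in case (ii') the subderivation of $s\leq k_j(t')$ gives $h(s)\leq h(k_j(t'))\leq h(t')<h(t)$; in case (iii') we have $h(s)=h(s')+1$ and $h(t)=h(t')+1$, so the strict/non-strict character of $h(s)\leq h(t)$ is inherited from that of $h(s')\leq h(t')$. Now, given $s\leq t$ and $t\leq s$, one has $h(s)=h(t)$, hence neither derivation can invoke (ii') anywhere, meaning both are built purely from (i') and (iii'). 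A short further induction on $h(s)+h(t)$ then shows that $s$ and $t$ must be $\vartheta$-towers with identical index sequences and leaf labels $x,y\in X$ satisfying $x\leq_X y$ from $s\leq t$ and $y\leq_X x$ from $t\leq s$; antisymmetry of $X$ forces $x=y$ and thus $s=t$.

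The main obstacle is antisymmetry, which a priori looks delicate because clause~(ii') is asymmetric in its arguments and allows genuine extension of terms (for instance $\vartheta_0\overline x\leq\vartheta_0\vartheta_0\overline x$). Identifying the height-monotonicity principle is what renders the argument short and avoids the need for an intricate partial-order analog of the $s$-secure subterm analysis in Lemma~\ref{lem:T(X)-linear}. Transitivity, by contrast, is largely a bookkeeping exercise once Lemma~\ref{lem:k_i-monotone} is in hand.
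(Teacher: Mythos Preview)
Your proposal is correct and follows essentially the same approach as the paper: reflexivity by term induction, transitivity by induction on $h(s)+h(t)+h(u)$ with Lemma~\ref{lem:k_i-monotone} handling the key case, and antisymmetry via the observation that $s\leq_{\se(X)} t$ forces $h(s)\leq h(t)$ with strict inequality whenever clause~(ii') enters. Your formulation of antisymmetry (tracking strictness of the height inequality through the whole derivation) is a mild sharpening of the paper's version, which only notes the strict drop at the top-level use of~(ii') and then runs a separate induction, but the content is the same.
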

\begin{proof}
Reflexivity is immediate by induction over terms. To establish antisymmetry one first shows that $s\leq t$ entails $h(s)\leq h(t)$, by induction over~$h(s)+h(t)$. The point is that $t\leq s$ becomes impossible if $s\leq t=\vartheta_jt'$ holds by clause~(ii') above, since we then have
\begin{equation*}
h(s)\leq h(k_j(t'))\leq h(t')<h(t).
\end{equation*}
Antisymmetry between $s$ and $t$ is now straightforward by induction on~$h(s)+h(t)$. To show that $r\leq s$ and $s\leq t$ imply $r\leq t$, one uses induction on $h(r)+h(s)+h(t)$. In the most interesting case we have $r\leq s=\vartheta_is'$ because of $r\leq k_i(s')$, while $s\leq t=\vartheta_i t'$ is due to $s'\leq t'$. The previous lemma ensures $k_i(s')\leq k_i(t')$. Using the induction hypothesis, we get $r\leq k_i(t')$ and then $r\leq\vartheta_it'=t$.
\end{proof}

For $X=1=\{0\}$ we get $\overline 0\leq_{\se(1)}s$ by a straightforward induction over~$s\in\se(X)$. This means that our order~$(\se_n(1),\leq_{\se(1)})$ coincides with the order~$(T_n,\trianglelefteq)$ defined in~\cite[Section~2.3.3]{MRW-linear}. According to~\cite[Lemma~10]{MRW-linear}, we thus have the following (see the cited reference for a precise definition of the gap condition):

\begin{corollary}\label{cor:gap-condition-coincides}
If we identify $\vartheta_{i_1}\ldots\vartheta_{i_n}0\in\se_n(1)$ with the sequence $\langle i_1,\dots,i_n\rangle$, then $\leq_{\se(1)}$ coincides with the strong gap embeddability relation due to H.~Friedman.
\end{corollary}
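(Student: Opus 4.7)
The plan is to reduce the claim directly to \cite[Lemma~10]{MRW-linear}, which establishes the analogous statement for the order $(T_n,\trianglelefteq)$ from \cite[Section~2.3.3]{MRW-linear}. All that is needed is to check that, under the identification $\vartheta_{i_1}\cdots\vartheta_{i_n}\overline{0}\leftrightarrow\langle i_1,\ldots,i_n\rangle$, the order $(\se_n(1),\leq_{\se(1)})$ literally equals $(T_n,\trianglelefteq)$. Once that identification is in place, no further argument is required.

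First I would compare underlying sets. The generation clauses~(i) and~(ii) of Definition~\ref{def:term-system-T} (adopted for $\se$ via Definition~\ref{def:se-order}) match the generation of $T_n$ in \cite{MRW-linear} verbatim after identifying $\overline{0}$ with the base element~$0$; this is exactly the observation already used in Corollary~\ref{cor:coincide-mrw-linear}. In particular the function $k_i$ has the same action on terms as in \cite{MRW-linear}.

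Next I would compare the two orderings clause by clause. Clause~(i') of Definition~\ref{def:se-order} degenerates to reflexivity on the singleton $1=\{0\}$; clauses~(ii') and~(iii') are the standard recursive clauses used to define $\trianglelefteq$ in \cite{MRW-linear}. To see that the two relations truly coincide (and not just contain each other up to a missing base case), one verifies $\overline{0}\leq_{\se(1)}s$ for every $s\in\se(1)$ by induction on $h(s)$: for $s=\vartheta_j s'$ the induction hypothesis gives $\overline{0}\leq_{\se(1)}k_j(s')$ (since $h(k_j(s'))\leq h(s')<h(s)$), and then clause~(ii') yields the desired inequality. Combined with Lemma~\ref{lem:se-partial-order}, this shows that $(\se_n(1),\leq_{\se(1)})$ is the partial order $(T_n,\trianglelefteq)$.

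The result of \cite[Lemma~10]{MRW-linear} now asserts that $\trianglelefteq$ on $T_n$ is exactly Friedman's strong gap embeddability relation when terms are read as sequences via $\vartheta_{i_1}\cdots\vartheta_{i_n}0\leftrightarrow\langle i_1,\ldots,i_n\rangle$. The only real work in this proof is the identification above; there is no genuine obstacle, since the definitions in Section~\ref{sect:application-2} were designed to reproduce those of \cite{MRW-linear} up to notation.
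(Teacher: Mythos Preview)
Your proposal is correct and follows essentially the same route as the paper: identify $(\se_n(1),\leq_{\se(1)})$ with the order $(T_n,\trianglelefteq)$ from \cite{MRW-linear} by checking $\overline 0\leq_{\se(1)}s$ for all $s$ via induction, and then invoke \cite[Lemma~10]{MRW-linear}. Your write-up is slightly more explicit about the induction step and about why the inequality $\overline 0\leq_{\se(1)}s$ is needed, but the argument is the same.
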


Analogous to Section~\ref{sect:collapsing-functions}, we will show that $\se^0_{n+1}$ is the Kruskal derivative of~$\se^0_n$. Let us first turn $\se_n$ and $\se^0_n$ into $\po$-dilators.

\begin{definition}\label{def:se-dilators}
For each quasi embedding $f:X\to Y$ between partial orders, let $\se(f):\se(X)\to\se(Y)$ be defined by the clauses from Definition~\ref{def:T-dilator} (with $\se$ at the place of~$\te$). Let $\underline k:\se(X)\to X$ be given as in the paragraph before Proposition~\ref{prop:T-deriv-prep}. For a partial order~$X$, we now define $\supp^\se_X:\se(X)\to[X]^{<\omega}$ by $\supp^\se_X(s)=\{\underline k(s)\}$. We will also write $\supp^\se_X$ for the restrictions of this function to $\se_n(X)$ and to $\se^0_n(X)$. Furthermore, we write $\se_n(f):\se_n(X)\to\se_n(Y)$ and $\se^0_n(f):\se^0_n(X)\to\se^0_n(Y)$ for the restrictions of~$\se(f)$ with the indicated (co-)domains.
\end{definition}

As expected, we have the following:

\begin{proposition}
Definition~\ref{def:se-dilators} extends $\se_n$ and $\se^0_n$ into normal~$\po$-dilators.
\end{proposition}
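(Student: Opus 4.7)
The plan is to follow the template of Proposition~\ref{prop:te-dilators}, attending to two features specific to the partial-order setting: $\se(f)$ must now be a quasi embedding, not just order-preserving; and we must verify the normality condition from~\cite[Definition~2.3]{freund-kruskal-gap}, which had no counterpart in the linear case.

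\textbf{Functoriality.} First I would record the identity $k_i(\se(f)(s)) = \se(f)(k_i(s))$ by a straightforward induction on~$s$. Then, for any quasi embedding $f:X\to Y$ between partial orders, I would prove
\[
s\leq_{\se(X)} t \quad\Longleftrightarrow\quad \se(f)(s)\leq_{\se(Y)}\se(f)(t)
\]
by induction on $h(s)+h(t)$, case-splitting on clauses~(i'), (ii'), (iii') of Definition~\ref{def:se-order}. Since $\se(f)$ preserves the syntactic shape of terms ($\overline x\mapsto\overline{f(x)}$ and $\vartheta_j r\mapsto\vartheta_j\se(f)(r)$), the same case split drives both implications: the base clause~(i') uses that $f$ preserves and reflects the order of~$X$; clause~(ii') uses the $k_i$-commutation identity together with the induction hypothesis; clause~(iii') invokes the induction hypothesis directly. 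Two further easy inductions then show that~$\se$ respects identities and compositions, so $\se_n$ and $\se^0_n$ are endofunctors on~$\po$.

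\textbf{Supports and the support condition.} A routine induction on~$s$ gives $\underline k(\se(f)(s)) = f(\underline k(s))$, which immediately yields $\supp^\se_Y\circ \se(f)(s) = [f]^{<\omega}\circ\supp^\se_X(s)$, establishing naturality of $\supp^\se$. The support condition $\supp^\se_Y(r)\subseteq\rng(f) \Rightarrow r\in\rng(\se_n(f))$ is then proved by induction on $r\in\se_n(Y)$, exactly as in the linear case: for $r=\overline y$, the singleton support forces $y\in\rng(f)$; for $r=\vartheta_i r'$, we use $\underline k(r)=\underline k(r')$ so that $\supp^\se_Y(r)=\supp^\se_Y(r')$, and the induction hypothesis applies. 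The restriction to $\se^0_n$ goes through because $S$ is invariant under~$\se_n(f)$.

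\textbf{Normality.} Since every element of $\se(X)$ has singleton support $\{\underline k(s)\}$, the normality condition of~\cite[Definition~2.3]{freund-kruskal-gap} reduces to the implication
\[
s\leq_{\se(X)} t \quad\Longrightarrow\quad \underline k(s)\leq_X\underline k(t).
\]
This I would prove by induction on $h(s)+h(t)$, exploiting the identity $\underline k(\vartheta_j r)=\underline k(r)=\underline k(k_j(r))$ to handle clauses~(ii') and~(iii'); clause~(i') is immediate. The main subtlety is the reverse direction of the functoriality biconditional, where one must carefully handle mixed configurations (one term of the form $\overline x$, the other of the form $\vartheta_j r$) and, in particular, must exploit that $f$ reflects the order on~$X$ in the base case. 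Everything else is a direct transfer of Proposition~\ref{prop:te-dilators} from~$\te$ to~$\se$, together with the new normality check.
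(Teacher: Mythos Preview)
Your approach is correct and matches the paper's: both reduce to the template of Proposition~\ref{prop:te-dilators} plus the two extra checks (quasi embeddings, normality). Two small points are worth noting.

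First, you overstate functoriality as a biconditional. Since $f$ is only a quasi embedding, it reflects but need not preserve the order of~$X$, so the forward implication $s\leq_{\se(X)}t\Rightarrow\se(f)(s)\leq_{\se(Y)}\se(f)(t)$ fails already at clause~(i'). Only the reverse implication is required (and provable), since a $\po$-dilator need only send quasi embeddings to quasi embeddings. You appear to recognise this later when you stress that the reverse direction is ``the main subtlety'' and relies on $f$ reflecting the order; so this is an overclaim in the statement rather than a gap in the argument. Just drop the forward direction and the phrase ``$f$ preserves and reflects''.

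Second, for normality the paper does not run a fresh induction but simply invokes Lemma~\ref{lem:k_i-monotone}: from $s\leq_{\se(X)}t$ one gets $k_{-1}(s)\leq_{\se(X)}k_{-1}(t)$, and since $k_{-1}(r)=\overline{\underline k(r)}$ this reads off as $\underline k(s)\leq_X\underline k(t)$. Your direct induction is fine and amounts to the same computation, but the appeal to the lemma is shorter since it is already in hand.
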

\begin{proof}
Most verifications are completely parallel to the proof of Proposition~\ref{prop:te-dilators}, but two additional observations are needed: First, the map $f\mapsto\se(f)$ on morphisms preserves not only embeddings but also quasi embeddings, as required by clause~(i) of~\cite[Definition~2.1]{freund-kruskal-gap}. Secondly, normality amounts to the implication
\begin{equation*}
s\leq_{\se(X)}t\quad\To\quad\underline k(s)\leq_X\underline k(t),
\end{equation*}
which holds by Lemma~\ref{lem:k_i-monotone} (since we have $\underline k(r)=x$ for $k_{-1}(r)=\overline x$).
\end{proof}

Let us now describe the extension into a Kruskal derivative:

\begin{definition}
For each number~$n\in\mathbb N$ and any partial order~$X$, let
\begin{equation*}
\sigma^n_X:\se_n\circ\se^0_{n+1}(X)\to\se_{n+1}(X)
\end{equation*}
be given by the clauses from Definition~\ref{def:iota-theta}. Now define functions
\begin{equation*}
\kappa^n_X:\se^0_n\circ\se^0_{n+1}(X)\to\se^0_{n+1}(X)\quad\text{and}\quad\iota^n_X:X\to\se^0_{n+1}(X)
\end{equation*}
by setting $\kappa^n_X(s)=\vartheta_0\sigma^n_X(s)$ and $\iota^n_X(x)=\overline x$.
\end{definition}

Arguing as in the proof of Proposition~\ref{prop:T-deriv-prep}, one can show the following:

\begin{proposition}
The function $\sigma^n_X:\se_n\circ\se^0_{n+1}(X)\to\se_{n+1}(X)$ is an order isomorphism, for each number~$n\in\mathbb N$ and any partial order~$X$.
\end{proposition}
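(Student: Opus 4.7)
The proof follows the pattern of Proposition~\ref{prop:T-deriv-prep}, but with a new wrinkle: since the target is a partial order, we can no longer promote an order-preserving implication to an equivalence via trichotomy, so each direction of the equivalence $s\leq t\iff\sigma^n_X(s)\leq\sigma^n_X(t)$ must be proved explicitly.

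First I establish surjectivity of $\sigma^n_X$ by induction on terms $r\in\se_{n+1}(X)$: a term $\overline x$ is the image of $\overline{\overline x}$; a term $\vartheta_0 r'$ lies in $\se^0_{n+1}(X)$ and is therefore the image of $\overline{\vartheta_0 r'}$; for $r=\vartheta_i r'$ with $i\geq 1$, the induction hypothesis yields $s'$ with $\sigma^n_X(s')=r'$, and a brief check of the admissibility constraint on outer indices shows that $\vartheta_{i-1}s'$ lies in $\se_n\circ\se^0_{n+1}(X)$ and maps to $r$. Next, I record the auxiliary identity
\begin{equation*}
k_{i+1}(\sigma^n_X(s))=\sigma^n_X(k_i(s))\qquad\text{for }i\in\mathbb N\cup\{-1\},
\end{equation*}
proved by a routine induction on $s$ as in Proposition~\ref{prop:T-deriv-prep}; the base case $s=\overline r$ rests on $k_{i+1}(r)=r$, which holds because $r\in\se^0_{n+1}(X)$ has outer index $\leq 0\leq i+1$.

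The heart of the proof is the bi-implication $s\leq t\iff\sigma^n_X(s)\leq\sigma^n_X(t)$, established by simultaneous induction on $h(s)+h(t)$. The forward direction is a clause-by-clause translation of Definition~\ref{def:se-order}: clause~(i') uses that $\se^0_{n+1}(X)$ is a suborder of $\se_{n+1}(X)$; clause~(ii') applies the auxiliary identity to rewrite $k_{j+1}(\sigma^n_X(t'))=\sigma^n_X(k_j(t'))$ before reusing clause~(ii') on the right; clause~(iii') transports the matching outer index $i$ to $i+1$. For the converse I case-split on the forms of $s$ and $t$. The two cases where $s$ and $t$ share an outer shape, together with the case $s=\overline r$, $t=\vartheta_j t'$, are handled by reading off the only applicable right-hand clause and invoking the induction hypothesis; here $h(k_j(t'))\leq h(t')$ keeps the induction measure valid.

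The main obstacle, which has no analogue in the linear case, is the residual ``mixed'' case $s=\vartheta_i s'$ with $t=\overline{r'}$, which must be excluded entirely. I handle it via a small monotonicity lemma: if $u\leq_{\se(Y)}v$ then $S(u)\leq S(v)$. This is proved by induction on $h(u)+h(v)$; the only non-trivial clause is~(ii'), where $v=\vartheta_j v'$ gives $S(v)=j$ while $S(k_j(v'))\leq j$, so the induction hypothesis yields $S(u)\leq j=S(v)$. Applied to $u=\sigma^n_X(s)$ and $v=\sigma^n_X(t)$ in the mixed case, this produces the contradiction $1\leq i+1=S(\sigma^n_X(s))\leq S(\sigma^n_X(t))=S(r')\leq 0$. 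Order reflection combined with antisymmetry of $\leq_{\se_n\circ\se^0_{n+1}(X)}$ (Lemma~\ref{lem:se-partial-order}) gives injectivity, and together with surjectivity and order preservation this completes the verification that $\sigma^n_X$ is an order isomorphism.
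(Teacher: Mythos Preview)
Your proof is correct and follows essentially the same approach as the paper: surjectivity and the auxiliary identity $k_{i+1}(\sigma^n_X(s))=\sigma^n_X(k_i(s))$ are taken over from Proposition~\ref{prop:T-deriv-prep}, and the new ingredient---order reflection---is handled by induction on $h(s)+h(t)$, with the ``mixed'' case $s=\vartheta_i s'$, $t=\overline{r'}$ excluded via the monotonicity observation $u\leq_{\se(Y)}v\Rightarrow S(u)\leq S(v)$. The paper argues in exactly this way (and even singles out the same two cases as the interesting ones); your write-up merely supplies more detail, for instance the inductive justification of the $S$-monotonicity lemma and of surjectivity.
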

\begin{proof}
To show that $\sigma^n_X$ is surjective and order preserving, one argues as in the proof of Proposition~\ref{prop:T-deriv-prep}. It remains to show that $\sigma^n_X$ reflects the order, i.\,e., that
\begin{equation*}
\sigma^n_X(s)\leq\sigma^n_X(t)\quad\To\quad s\leq t
\end{equation*}
holds for $s,t\in\se_n\circ\se^0_{n+1}(X)$. We argue by induction on $h(s)+h(t)$ and discuss the two most interesting cases: First consider terms of the form $s=\overline r$ and $t=\vartheta_jt'$. In view of $r\in\te^0_{n+1}(X)$ we have $r=\overline x$ or $\vartheta_0r'$, which means that
\begin{equation*}
\sigma^n_X(s)=r\leq\vartheta_{j+1}\sigma^n_X(t')=\sigma^n_X(t)
\end{equation*}
can only be due to $\sigma^n_X(s)\leq k_{j+1}(\sigma^n_X(t'))=\sigma^n_X(k_j(t'))$, where the equality comes from the proof of Proposition~\ref{prop:T-deriv-prep}. By induction hypothesis we get $s\leq k_j(t')$ and then $s\leq\vartheta_jt'=t$, as desired. Let us also consider $s=\vartheta_is'$ and $t=\overline r$. For terms of these forms, the inequality $s\leq t$ is always false. To show that $\sigma^n_X(s)\leq\sigma^n_X(t)$ is false as well, we observe
\begin{equation*}
S(\sigma^n_X(s))=S(\vartheta_{i+1}\sigma^n_X(s'))=i+1>0\geq S(r)=S(\sigma^n_X(t)),
\end{equation*}
where $S(r)\leq 0$ is due to~$r\in\te^0_n(X)$. On the other hand, a straightforward induction on $h(s_0)+h(t_0)$ shows that $s_0\leq t_0$ implies $S(s_0)\leq S(s_1)$.
\end{proof}

In the following, $\iota^n$ and $\kappa^n$ denote the families of functions $\iota^n_X:X\to\se^0_{n+1}(X)$ and $\kappa^n_X:\se^0_n\circ\se^0_{n+1}(X)\to\se^0_{n+1}(X)$, respectively, indexed by the partial order~$X$. We recall from~\cite[Section~4]{freund-kruskal-gap} that Kruskal derivatives are essentially unique. Hence the following means that the recursive construction from the introduction is uniquely realized by the normal $\po$-dilators~$\se^0_n$ that we have defined in the present section.

\begin{theorem}\label{thm:S-n+1-deriv}
The Kruskal derivative of~$\se^0_n$ is given by $(\se^0_{n+1},\iota^n,\kappa^n)$.
\end{theorem}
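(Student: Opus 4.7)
The plan is to mirror the proof of Theorem~\ref{thm:T-n+1-deriv}, replacing each linear-order ingredient by its partial-order counterpart from~\cite{freund-kruskal-gap}. A short induction over terms shows that $\sigma^n_X$ is natural in~$X$, and hence so are $\iota^n_X$ and $\kappa^n_X$. By the definition of Kruskal derivative from~\cite[Section~4]{freund-kruskal-gap}, it then suffices to check that $(\se^0_{n+1}(X),\iota^n_X,\kappa^n_X)$ is an initial Kruskal fixed point of~$\se^0_n$ over~$X$, for every partial order~$X$.

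For the Kruskal-fixed-point conditions of~\cite[Definition~3.1]{freund-kruskal-gap}, $\iota^n_X$ is manifestly a quasi embedding, and the non-comparabilities between values of $\iota^n_X$ and $\kappa^n_X$ required by that definition are immediate by inspection of clauses~(i')--(iii') of Definition~\ref{def:se-order}: no term of the form $\overline x$ can be $\geq_{\se^0_{n+1}(X)}$ a term $\vartheta_0 s$. The substantive content is the partial-order analogue of Proposition~\ref{prop:T-deriv-prep}: whenever $s\leq t$ in $\se^0_n\circ\se^0_{n+1}(X)$ and $\underline k(s)\leq\kappa^n_X(t)$ in $\se^0_{n+1}(X)$, one has $\kappa^n_X(s)\leq\kappa^n_X(t)$, and $\underline k(s)\leq\kappa^n_X(s)$ always holds. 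Both reduce, via the identity $k_0(\sigma^n_X(s))=\sigma^n_X(k_{-1}(s))=\underline k(s)$ (whose proof carries over verbatim from the linear case), to facts about~$\leq_{\se(X)}$: the first to the order-preservation half of the preceding proposition together with clause~(iii') of Definition~\ref{def:se-order}, and the second to clause~(ii') of the same definition combined with Lemma~\ref{lem:k_i-monotone}.

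For initiality I would invoke the partial-order counterpart of Theorem~\ref{thm:initial-characterize}, namely~\cite[Theorem~3.5]{freund-kruskal-gap}. The decomposition $\se^0_{n+1}(X)=\rng(\iota^n_X)\cup\rng(\kappa^n_X)$ is obtained exactly as in the proof of Theorem~\ref{thm:T-n+1-deriv}: any element of $\se^0_{n+1}(X)$ is either an $\overline x$ or a term $\vartheta_0 s$ with $s\in\se_{n+1}(X)$ and $S(s)\leq 1$; surjectivity of $\sigma^n_X$ (from the preceding proposition) together with the index computation from Section~\ref{sect:collapsing-functions} yields some $s'\in\se^0_n\circ\se^0_{n+1}(X)$ with $\kappa^n_X(s')=\vartheta_0 s$. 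For the required length function we use the restriction of $h:\se(X)\to\mathbb N$ given by $h(\overline x)=0$ and $h(\vartheta_i s)=h(s)+1$; since $\supp^\se_{\se^0_{n+1}(X)}(s)=\{\underline k(s)\}$, it suffices to observe $h(\underline k(s))\leq h(\sigma^n_X(s))<h(\vartheta_0\sigma^n_X(s))=h(\kappa^n_X(s))$, which follows by a routine induction.

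The main obstacle, as I see it, is the faithful translation of Proposition~\ref{prop:T-deriv-prep} to the partial setting: the clauses of Definition~\ref{def:se-order} are phrased with non-strict~$\leq$, so one must line up Lemma~\ref{lem:k_i-monotone} with the recursive clauses carefully to reproduce the monotonicity argument without secretly using linearity (e.\,g., one cannot appeal to trichotomy to upgrade order-preservation into an order isomorphism, which is why that fact had to be established separately in the preceding proposition). Once that partial-order version of Proposition~\ref{prop:T-deriv-prep} is in place, the rest of the proof is purely formal.
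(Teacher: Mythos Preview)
Your overall strategy (naturality, then fixed-point axioms, then initiality via \cite[Theorem~3.5]{freund-kruskal-gap}, with the same length function~$h$) matches the paper's. The gap is in the fixed-point axioms themselves: you have transplanted the \emph{Bachmann--Howard} shape of the conditions rather than the \emph{Kruskal} shape. The conditions of \cite[Definition~3.1]{freund-kruskal-gap} are not one-way implications but the equivalences
\[
\iota^n_X(x)\leq\kappa^n_X(t)\;\Leftrightarrow\;\iota^n_X(x)\leq\underline k(t),
\qquad
\kappa^n_X(s)\leq\kappa^n_X(t)\;\Leftrightarrow\;s\leq t\ \text{or}\ \kappa^n_X(s)\leq\underline k(t).
\]
Your ``substantive content'' --- that $s\leq t$ together with $\underline k(s)\leq\kappa^n_X(t)$ gives $\kappa^n_X(s)\leq\kappa^n_X(t)$, and that $\underline k(s)\leq\kappa^n_X(s)$ --- only yields (part of) the $\Leftarrow$ directions; the hypothesis $\underline k(s)\leq\kappa^n_X(t)$ is in fact superfluous, since clause~(iii') of Definition~\ref{def:se-order} already gives $\vartheta_0\sigma^n_X(s)\leq\vartheta_0\sigma^n_X(t)$ from $\sigma^n_X(s)\leq\sigma^n_X(t)$ alone. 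What you are missing is precisely the $\Rightarrow$ direction, which is the compensation for the loss of trichotomy that you yourself flag in the last paragraph.

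The fix is to argue as the paper does: unfold $\kappa^n_X(s)\leq\kappa^n_X(t)$, i.e.\ $\vartheta_0\sigma^n_X(s)\leq\vartheta_0\sigma^n_X(t)$, directly via clauses~(ii') and~(iii') of Definition~\ref{def:se-order}. This yields the disjunction $\kappa^n_X(s)\leq k_0(\sigma^n_X(t))$ or $\sigma^n_X(s)\leq\sigma^n_X(t)$; the first disjunct is $\kappa^n_X(s)\leq\underline k(t)$ via your identity $k_0\circ\sigma^n_X=\underline k$, and the second is equivalent to $s\leq t$ because $\sigma^n_X$ is an order \emph{isomorphism}. So it is the order-\emph{reflecting} half of the preceding proposition, not the order-preserving half, that carries the weight here. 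The first equivalence (for $\iota^n_X(x)=\overline x$) is handled the same way, noting that only clause~(ii') applies. Once you make this correction, the rest of your outline (naturality, surjectivity of $\sigma^n_X$ for the range decomposition, and the bound $h(\underline k(s))<h(\kappa^n_X(s))$) is exactly the paper's argument.
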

\begin{proof}
Part~(i) of~\cite[Definition~4.1]{freund-kruskal-gap} requires that $(\se^0_{n+1}(X),\iota^n_X,\kappa^n_X)$ is an initial Kruskal fixed point of $\se^0_n$ over~$X$, for each partial order~$X$. The most interesting conditions from the definition of Kruskal fixed point (see~\cite[Definition~3.1]{freund-kruskal-gap}) demand
\begin{align*}
\overline x\leq\kappa^n_X(t)\quad&\Leftrightarrow\quad\overline x\leq\underline k(t),\\
\kappa^n_X(s)\leq\kappa^n_X(t)\quad&\Leftrightarrow\quad s\leq t\text{ or }\kappa^n_X(s)\leq\underline k(t)
\end{align*}
for $x\in X$ and $s,t\in\se^0_n\circ\se^0_{n+1}(X)$. In view of $\kappa^n_X(t)=\vartheta_0\sigma^n_X(t)$, the left side of the first equivalence is indeed equivalent to
\begin{equation*}
\overline x\leq k_0(\sigma^n_X(t))=\sigma^n_X(k_{-1}(t))=\underline k(t),
\end{equation*}
as in the proof of Proposition~\ref{prop:T-deriv-prep}. Similarly, the left side of the second equivalence is equivalent to the disjunction of $\sigma^n_X(s)\leq\sigma^n_X(t)$ and $\kappa^n_X(s)\leq\underline k(t)$. This is equivalent to the right side, as $\sigma^n_X$ is an order embedding. To show that our fixed point is initial we use the criterion from~\cite[Theorem~3.5]{freund-kruskal-gap}, which demands that we have $\se^0_{n+1}(X)=\rng(\iota^n_X)\cup\rng(\kappa^n_X)$ and $h(\underline k(s))<h(\kappa^n_X(s))$ for some $h:\se^0_{n+1}(X)\to\mathbb N$. These requirements can be verified as in the proof of Theorem~\ref{thm:T-n+1-deriv}. It remains to show that part~(ii) of~\cite[Definition~4.1]{freund-kruskal-gap} is satisfied, i.\,e., that the functions $\iota^n_X$ and $\kappa^n_X$ are natural in~$X$. This is readily reduced to the naturality of~$\sigma^n_X$, which can be established by induction over terms, as in the proof of Theorem~\ref{thm:T-n+1-deriv}.
\end{proof}

We can now use our general approach to re-derive two results that were previously shown by explicit computations. In view of Corollaries~\ref{cor:coincide-mrw-linear} and~\ref{cor:gap-condition-coincides}, we focus on the case of~$X=1$. The following is a special case of H.~Friedman's result on tree embeddings with the gap condition (see~\cite{simpson85}). For the case of sequences, the result was analyzed by Sch\"utte and Simpson~\cite{schuette-simpson} (see also~\cite[Section~2.2]{MRW-linear}).

\begin{corollary}
The order $\se^0_n(1)$ (sequences with gap condition, cf.~Corollary~\ref{cor:gap-condition-coincides}) is a well partial order for each~$n\in\mathbb N$.
\end{corollary}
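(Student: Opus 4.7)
The plan is to establish, by induction on $n$, the stronger statement that $\se^0_n$ is a normal $\wpo$-dilator, i.e.\ that $\se^0_n(X)$ is a well partial order whenever $X$ is one. The corollary then follows at once by specializing to $X=1$. Strengthening the induction hypothesis in this way is necessary because the inductive step will invoke a fact about $\se^0_n$ applied to \emph{arbitrary} well partial order inputs, not just to~$1$.

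For the base case $n=0$, clause~(ii) of Definition~\ref{def:se-order} cannot be applied (the constraint $i<0$ is unsatisfiable), so $\se^0_0$ reduces to the identity functor on partial orders. It is therefore trivially a normal $\wpo$-dilator. For the inductive step, assume $\se^0_n$ is a normal $\wpo$-dilator. By Theorem~\ref{thm:S-n+1-deriv}, the triple $(\se^0_{n+1},\iota^n,\kappa^n)$ is the Kruskal derivative of $\se^0_n$. The fact that the Kruskal derivative of a normal $\wpo$-dilator is again a normal $\wpo$-dilator is the uniform Kruskal theorem from~\cite{frw-kruskal}, whose proof uses the minimal bad sequence method of Nash-Williams~\cite{nash-williams63} (compare also~\cite[Proposition~2.7]{freund-kruskal-gap}, which is what the proof of Corollary~\ref{cor:deriv-dilator} relied upon in the linear analogue). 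Hence $\se^0_{n+1}$ is a normal $\wpo$-dilator, which closes the induction.

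There is essentially no further content to be supplied: once Theorem~\ref{thm:S-n+1-deriv} has identified $\se^0_{n+1}$ as the Kruskal derivative of $\se^0_n$, the proof reduces to a routine induction relying on established machinery. This mirrors exactly the argument for the linear case in Corollary~\ref{cor:collapsing-well-founded}, with the uniform Kruskal theorem (on partial orders) taking the place of Corollary~\ref{cor:deriv-dilator} (on linear orders). The only potential obstacle is to make sure that the normality clauses match up under the transfer through Theorem~\ref{thm:S-n+1-deriv}; but normality of $\se^0_{n+1}$ as a $\po$-dilator was already established earlier in this section, so this is just a matter of citing the right statements in the right order.
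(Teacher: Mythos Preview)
Your proof is correct and follows essentially the same approach as the paper: both argue by induction on~$n$ that $\se^0_n$ preserves well partial orders, with the base case trivial since $\se^0_0$ is the identity and the inductive step supplied by Theorem~\ref{thm:S-n+1-deriv} together with the fact that Kruskal derivatives of $\wpo$-dilators are again $\wpo$-dilators. The only cosmetic difference is the precise citation for this last fact (the paper invokes~\cite[Corollary~4.5]{freund-kruskal-gap} directly, whereas you point to the uniform Kruskal theorem of~\cite{frw-kruskal} and~\cite[Proposition~2.7]{freund-kruskal-gap}), but these amount to the same underlying result.
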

\begin{proof}
We argue by induction on $n$ to show that $X\mapsto\se^0_n(X)$ preserves well partial orders. The base case is immediate in view of~$\se^0_n(X)\cong X$. The induction step is covered by~\cite[Corollary~4.5]{freund-kruskal-gap}, given that $\se^0_{n+1}$ is the Kruskal derivative of $\se^0_n$.
\end{proof}

As in the previous section, a surjective quasi embedding $f:X\to Y$ between a linear order~$X$ and a partial order~$Y$ is called a linearization. The following result was first established by a concrete verification in~\cite[Lemma~11]{MRW-linear}, which provides additional information: it shows that one can take $\nu^n_1$ to be the identity on the underlying set~$\te^0_n(1)=\se^0_n(1)$. This information could also be tracked through our general constructions, but we will not do so: the forte of our approach is precisely that fewer concrete computations are necessary.

\begin{corollary}
For each $n\in\mathbb N$ we have a linearization $\nu^n_1:\te^0_n(1)\to\se^0_n(1)$ of the partial order~$\se^0_n(1)$ (sequences with gap condition) by the linear order $\te^0_n(1)$ (collapsing functions, cf.~Section~\ref{sect:collapsing-functions}).
\end{corollary}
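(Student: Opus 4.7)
The plan is to prove by simultaneous induction on $n$ the two statements: (a) there exists a linearization of dilators $\nu^n:\te^0_n\Rightarrow\se^0_n\!\restriction\!\lo$, and (b) the $\po$-dilator~$\se^0_n$ is flat in the sense of Definition~\ref{def:dil-flat}. The required map $\nu^n_1$ is then obtained by evaluating $\nu^n$ at the linear order~$X=1$; surjectivity of $\nu^n_1$ and the quasi embedding property hold componentwise.

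For the base case $n=0$, both $\te^0_0$ and $\se^0_0$ are identity functors (on $\lo$ and~$\po$ respectively), so taking $\nu^0_X:X\to X$ to be the identity map gives a natural family of surjective order isomorphisms, hence a linearization in the sense of Definition~\ref{def:linearization-dil}. The identity $\po$-dilator is trivially flat: its support at $y$ is just $\{y\}$, so the support condition $\supp_Y(y)\subseteq\rng(f)$ reduces to $y\in\rng(f)$, which is precisely the conclusion $y\in\rng(\operatorname{Id}(f))$.

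For the inductive step, suppose $\nu^n$ is a linearization and $\se^0_n$ is flat. By Theorem~\ref{thm:T-n+1-deriv} the $\lo$-dilator~$\te^0_{n+1}$ is (up to the uniqueness of Proposition~\ref{prop:deriv-unique}) the Bachmann-Howard derivative of~$\te^0_n$, and by Theorem~\ref{thm:S-n+1-deriv} the $\po$-dilator~$\se^0_{n+1}$ is the Kruskal derivative of~$\se^0_n$. Applying Theorem~\ref{thm:quasi-embeddings-derivatives} to the quasi embedding~$\nu^n$ produces a natural quasi embedding $(\nu^n)^+:\te^0_{n+1}\Rightarrow\se^0_{n+1}\!\restriction\!\lo$, which we take as~$\nu^{n+1}$. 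Since $\nu^n$ is a linearization and $\se^0_n$ is flat, Theorem~\ref{thm:linearization-preserved} upgrades $\nu^{n+1}$ to a linearization. Finally, Proposition~\ref{prop:flat-derivative} shows that the Kruskal derivative of the flat normal $\po$-dilator~$\se^0_n$ is again flat, so $\se^0_{n+1}$ is flat, closing the induction.

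No substantive obstacle is expected: every machine part has already been assembled in Section~\ref{sect:linear-partial}, and the identifications of~$\te^0_{n+1}$ and~$\se^0_{n+1}$ with the appropriate derivatives are provided by Theorems~\ref{thm:T-n+1-deriv} and~\ref{thm:S-n+1-deriv}. The one point that must not be overlooked is to carry the flatness hypothesis (b) through the induction alongside~(a), since it is an essential input to Theorem~\ref{thm:linearization-preserved}; this is precisely why Proposition~\ref{prop:flat-derivative} is needed.
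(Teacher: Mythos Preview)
Your proposal is correct and follows essentially the same approach as the paper: both argue by induction on~$n$, establishing flatness of~$\se^0_n$ (via Proposition~\ref{prop:flat-derivative}) and constructing $\nu^{n+1}=(\nu^n)^+$ via Theorems~\ref{thm:quasi-embeddings-derivatives} and~\ref{thm:linearization-preserved}, with the identifications of Theorems~\ref{thm:T-n+1-deriv} and~\ref{thm:S-n+1-deriv}. The only cosmetic difference is that the paper first proves flatness of all~$\se^0_n$ as a separate preparatory induction and then builds the~$\nu^n$, whereas you run both inductions simultaneously; the content is identical.
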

\begin{proof}
As preparation, we use induction on~$n$ to show that $\se^0_n$ is flat in the sense of Definition~\ref{def:dil-flat}. The induction step is covered by Proposition~\ref{prop:flat-derivative}. For the base case we recall $\se^0_0(Y)=\{\overline y\,|\,y\in Y\}$. Given~$f:X\to Y$ with 
\begin{equation*}
\supp^\se_Y(\overline y)=\{\underline k(\overline y)\}=\{y\}\subseteq\rng(f),
\end{equation*}
we write $y=f(x)$ to get $\overline y=\overline{f(x)}=\se^0_0(f)(\overline x)\in\rng(\se^0_0(f))$. The point is that this works for any quasi embedding~$f$, not just for embeddings. By recursion on~$n$, we now construct linearizations $\nu^n:\te^0_n\To\se^0_n\!\restriction\!\lo$ in the sense of Definition~\ref{def:linearization-dil}. For $n=0$ we can define $\nu^0_X$ as the identity map on $\te^0_0(X)=\{\overline x\,|\,x\in X\}=\se^0_0(X)$. In the step we define $\nu^{n+1}$ as the quasi embedding $(\nu^n)^+$ from Theorem~\ref{thm:quasi-embeddings-derivatives}. This is justified because $\te^0_{n+1}$ and $\se^0_{n+1}$ are the Bachmann-Howard and Kruskal derivative of $\te^0_n$ and $\se^0_n$, respectively, by Theorems~\ref{thm:T-n+1-deriv} and~\ref{thm:S-n+1-deriv}. From Theorem~\ref{thm:linearization-preserved} we learn that $\nu^{n+1}$ is a linearization, given that the same holds for $\nu^n$ and that $\se^0_n$ is flat.
\end{proof}

The linearization of $\se^0_n(1)$ by $\te^0_n(1)$ does not realize the maximal order type, as shown in~\cite{MRW-linear} (see also the introduction of the present paper). We will now present a different construction of $\se^0_n(1)$ in terms of iterated Kruskal fixed points. Interestingly, the parallel construction in terms of Bachmann-Howard fixed points yields the order $OT_n[0]$ from~\cite[Section~5]{MRW-linear}, which differs from $\te^0_n(1)$ and does realize the maximal order type of~$\se^0_n(1)$. The idea is to consider the following transformations~$W_n$, which are readily shown to be normal $\po$-dilators.

\begin{definition}
For each~$n\in\mathbb N$ and any partial order~$X$ we put
\begin{equation*}
W_n(X)=1+\se^0_n(1)\times X=\{0\}\cup\{(s,x)\,|\,s\in\se^0_n(1)\text{ and }x\in X\}.
\end{equation*}
To define a partial order on~$W_n(X)$, we declare that $0$ is incomparable to all other elements, and that we have
\begin{equation*}
(s,x)\leq_{W_n(X)}(s',x')\quad\Leftrightarrow\quad s\leq_{\se(1)}s'\text{ and }x\leq_X x'.
\end{equation*}
Given a quasi embedding $f:X\to Y$, we define $W_n(f):W_n(X)\to W_n(Y)$ by $W_n(f)(0)=0$ and $W_n(f)(s,x)=(s,f(x))$. Also, let $\supp_X:W_n(X)\to[X]^{<\omega}$ be given by $\supp_X(0)=\emptyset$ and $\supp_X(s,x)=\{x\}$.
\end{definition}

We will show that $\se^0_{n+1}(1)$ is the initial Kruskal fixed point of $W_n$ over $0=\emptyset$. This gives rise to a recursive construction, since $W_n$ does only depend on~$\se^0_n(1)$, and since initial objects are unique up to isomorphism.

\begin{definition}
To construct a function $\pi_n:\se_n(1)\times\se^0_{n+1}(1)\to\se_{n+1}(1)$, we define $\pi_n(s,t)$ by recursion over the term~$s$, setting
\begin{equation*}
\pi_n(\overline 0,t)=t\quad\text{and}\quad\pi_n(\vartheta_is,t)=\vartheta_{i+1}\pi_n(s,t).
\end{equation*}
We then define $\kappa_n:W_n(\se^0_{n+1}(1))\to\se^0_{n+1}(1)$ by $\kappa_n(0)=\overline 0$ and $\kappa_n(s,t)=\vartheta_0\pi_n(s,t)$. Let us also agree to write $\iota_n:0\to\se^0_{n+1}(1)$ for the empty function.
\end{definition}

To see that the definition of $\pi_n$ is justified, one should observe that we have
\begin{equation*}
S(\pi_n(s,t))=\begin{cases}
S(t)\leq 0 & \text{if $s=0$},\\
S(s)+1 & \text{otherwise}.
\end{cases}
\end{equation*}
This also shows that $s\in\se^0_n(1)$ entails $S(\pi_n(s,t))\leq 1$, as needed to justify the definition of~$\kappa_n$. As promised, we have the following:

\begin{theorem}\label{thm:se_n-fixed-point-2}
The tuple $(\se^0_{n+1}(1),\iota_n,\kappa_n)$ is the initial Kruskal fixed point of $W_n$ over~$0$, for each number~$n\in\mathbb N$.
\end{theorem}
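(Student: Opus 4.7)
The proof follows the pattern of Theorem~\ref{thm:T-n+1-deriv}: verify that $(\se^0_{n+1}(1),\iota_n,\kappa_n)$ satisfies the Kruskal fixed point axioms of \cite[Definition~3.1]{freund-kruskal-gap}, then invoke the initiality criterion of \cite[Theorem~3.5]{freund-kruskal-gap}. Because $X=0$ is empty, every clause involving $\iota_n$ is vacuous, so the task reduces entirely to an analysis of $\kappa_n$.

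The key preparatory identity is
\[
k_0(\pi_n(s,t))=t\qquad\text{for all }s\in\se_n(1)\text{ and }t\in\se^0_{n+1}(1),
\]
proved by induction on~$s$. The base case $s=\overline 0$ uses $\pi_n(\overline 0,t)=t$ together with $k_0(t)=t$ since $S(t)\leq 0$. In the step $s=\vartheta_is'$ we have $\pi_n(s,t)=\vartheta_{i+1}\pi_n(s',t)$, so $k_0$ strips the outer layer (as $i+1\geq 1$) and the induction hypothesis applies. A parallel induction, based on Lemma~\ref{lem:k_i-monotone} and Definition~\ref{def:se-order}, shows that $\pi_n$ is an order isomorphism from $\se_n(1)\times\se^0_{n+1}(1)$ (with the product order) onto $\se_{n+1}(1)$.

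The central axiom to verify is
\[
\kappa_n(\sigma)\leq\kappa_n(\tau)\iff\sigma\leq_{W_n(Z)}\tau\text{ or }\kappa_n(\sigma)\leq r\text{ for some }r\in\supp^{W_n}_Z(\tau),
\]
with $Z=\se^0_{n+1}(1)$. The principal case is $\sigma=(s,t)$, $\tau=(s',t')$: here $\kappa_n(\sigma)\leq\kappa_n(\tau)$ unfolds to $\vartheta_0\pi_n(s,t)\leq\vartheta_0\pi_n(s',t')$, which by clauses~(ii') and~(iii') of Definition~\ref{def:se-order} disjunctively decomposes into $\vartheta_0\pi_n(s,t)\leq k_0(\pi_n(s',t'))$ and $\pi_n(s,t)\leq\pi_n(s',t')$. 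The second disjunct is equivalent to the product inequality $(s,t)\leq(s',t')$ by the preparatory paragraph, and the first simplifies via $k_0(\pi_n(s',t'))=t'$ to $\kappa_n(\sigma)\leq t'$, which exactly matches the unique support element $\supp^{W_n}_Z(\tau)=\{t'\}$. The degenerate cases where $\sigma$ or $\tau$ equals $0\in W_n(Z)$ are immediate, since $\kappa_n(0)=\overline 0$ is the minimum of $\se^0_{n+1}(1)$ and $\supp^{W_n}_Z(0)=\emptyset$.

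For initiality, I apply \cite[Theorem~3.5]{freund-kruskal-gap}. Surjectivity $\rng(\kappa_n)=\se^0_{n+1}(1)$ holds because $\overline 0=\kappa_n(0)$, while any term $\vartheta_0 r\in\se^0_{n+1}(1)$ has $r\in\se_{n+1}(1)$ with $S(r)\leq 1$; peeling its leading $\vartheta_j$-symbols of index $\geq 1$ produces a unique decomposition $r=\pi_n(s,t)$ with $s\in\se^0_n(1)$ and $t\in\se^0_{n+1}(1)$, so $\vartheta_0 r=\kappa_n(s,t)$. As length function one takes the term height $h:\se(1)\to\mathbb N$ from the paragraph after Definition~\ref{def:se-order}: an easy induction on $s'\in\se_n(1)$ gives $h(t')\leq h(\pi_n(s',t'))$, whence $h(t')<h(\vartheta_0\pi_n(s',t'))=h(\kappa_n(s',t'))$, as required. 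The main technical obstacle lies in the case analysis of the previous paragraph, specifically in the reflection claim for $\pi_n$: one must verify that whenever $\pi_n(s,t)\leq\pi_n(s',t')$ fails to come from the product inequality, the comparison must have been obtained via clause~(ii'), at which point the identity $k_0(\pi_n(s',t'))=t'$ redirects the witness into the support-based alternative of the Kruskal fixed point axiom.
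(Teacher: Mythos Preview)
Your approach is the same as the paper's: verify the fixed-point equivalence of \cite[Definition~3.1]{freund-kruskal-gap} via the identity for $k_0\circ\pi_n$ and the order-isomorphism property of $\pi_n$, then apply the initiality criterion of \cite[Theorem~3.5]{freund-kruskal-gap}. The degenerate cases, the surjectivity argument, and the choice of height function~$h$ all match.

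There is, however, a small gap in your preparation. You prove only the special case $k_0(\pi_n(s,t))=t$, but the inductive verification that $\pi_n$ is an order isomorphism needs the general identity
\[
k_{i+1}(\pi_n(s',t'))=\pi_n(k_i(s'),t')\qquad(i\geq -1),
\]
which the paper establishes first (by induction on~$s'$, using $k_{i+1}(t')=t'$ for $t'\in\se^0_{n+1}(1)$). The point is that in the induction on $h(s)+h(s')$ one meets comparisons such as $\pi_n(s,t)\leq_{\se(1)}\vartheta_{j+1}\pi_n(r',t')$ with $j\geq 0$, and clause~(ii') of Definition~\ref{def:se-order} then produces $k_{j+1}(\pi_n(r',t'))$; to invoke the induction hypothesis one must rewrite this as $\pi_n(k_j(r'),t')$. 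Lemma~\ref{lem:k_i-monotone} alone does not supply this rewriting. Once you add the general identity (a one-line induction), your ``parallel induction'' goes through and coincides with the paper's case analysis.

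Your final paragraph, which suggests bypassing the reflection claim by routing failures of the product inequality through clause~(ii'), is not needed and is somewhat misleading: once $\pi_n$ is a genuine order isomorphism, the disjunct $\pi_n(s,t)\leq\pi_n(s',t')$ is \emph{equivalent} to $(s,t)\leq(s',t')$, so no residual case remains to redirect.
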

\begin{proof}
Let us abbreviate $Z=\se^0_{n+1}(1)$. To show that we have a Kruskal fixed point, we need to prove that the equivalence
\begin{equation*}
\kappa_n(\sigma)\leq_Z\kappa_n(\tau)\quad\Leftrightarrow\quad\sigma\leq_{W_n(Z)}\tau\text{ or }\kappa_n(\sigma)\leq_Z t\text{ for some }t\in\supp_Z(\tau)
\end{equation*}
is satisfied for all $\sigma,\tau\in W_n(Z)$ (cf.~\cite[Definition~3.1]{freund-kruskal-gap}). For $\sigma=0$, both sides of the equivalence hold, since we have $\overline 0\leq_Z t$ for any $t\in Z$ (see the paragraph before Corollary~\ref{cor:gap-condition-coincides}). For $\sigma\neq 0$ and $\tau=0$, both sides of the equivalence fail. Now consider $\sigma=(s,t)$ and $\tau=(s',t')$. Then our equivalence amounts to
\begin{equation*}
\vartheta_0\pi_n(s,t)\leq_Z\vartheta_0\pi_n(s',t')\quad\Leftrightarrow\quad(s\leq_{\se(1)}s'\text{ and }t\leq_Z t')\text{ or }\vartheta_0\pi_n(s,t)\leq_Z t'.
\end{equation*}
Given that $t'\in\se^0_{n+1}(1)$ entails $k_{i+1}(t')=t'$, an auxiliary induction over $s'$ yields
\begin{equation*}
k_{i+1}(\pi_n(s',t'))=\pi_n(k_i(s'),t').
\end{equation*}
For $i=-1$ we get $k_{-1}(s')=\overline 0$ and hence $k_0(\pi_n(s',t'))=\pi_n(\overline 0,t')=t'$. In view of Definition~\ref{def:se-order}, this entails that the last equivalence reduces to
\begin{equation*}
\pi_n(s,t)\leq_{\se(1)}\pi_n(s',t')\quad\Leftrightarrow\quad s\leq_{\se(1)}s'\text{ and }t\leq_Z t'.
\end{equation*}
The latter can be shown by induction over $h(s)+h(s')$, where we admit $s,s'$ from the bigger set $\se_n(1)\supset\se^0_n(1)$ to make the induction go through. For $s=\overline 0=s'$, both sides of our equivalence amount to~$t\leq_Z t'$. Now consider $s=\overline 0$ and $s'=\vartheta_jr'$. In view of $S(\pi_n(s,t))=S(t)\leq 0<j+1$ and $\pi_n(s',t')=\vartheta_{j+1}\pi_n(r',t')$, the left side of the desired equivalence is equivalent to
\begin{equation*}
\pi_n(s,t)\leq_{\se(1)} k_{j+1}(\pi_n(r',t'))=\pi_n(k_j(r'),t').
\end{equation*}
Inductively, this is equivalent to the conjunction of $s\leq_{\se(1)}k_j(r')$ and $t\leq_Z t'$. We can conclude since both $s\leq_{\se(1)}s'$ and $s\leq_{\se(1)}k_j(r')$ are automatic for $s=\overline 0$. If we have $s=\vartheta_ir$ and $s'=\overline 0$, then both sides of the desired equivalence fail. To see this, note that we have
\begin{equation*}
S(\pi_n(s,t))=S(\vartheta_{i+1}\pi_n(r,t))=i+1>0\geq S(t')=S(\pi_n(s',t'))
\end{equation*}
and $S(s)=i>-1=S(s')$. At the same time, a straightforward induction shows that $s_0\leq_{\se(1)}s_1$ entails~$S(s_0)\leq S(s_1)$. Finally, consider $s=\vartheta_ir$ and $s'=\vartheta_jr'$. The left side of the desired equivalence can hold for two reasons: First, assume that we have $\pi_n(s,t)\leq_{\se(1)}k_{j+1}(\pi_n(r',t'))$. As before, we get $s\leq_{\se(1)}k_j(r')$ and $t\leq_Z t'$. The former entails $s\leq_{\se(1)}s'$, as needed for the right side. Now assume that the left side holds because we have $i=j$ and $\pi_n(r,t)\leq_{\se(1)}\pi_n(r',t')$. Inductively we get $r\leq_{\se(1)}r'$ and $t\leq_Z t'$, and the former entails $s\leq_{\se(1)}s'$. By reading the given argument backwards, one obtains the implication from right to left. To show that our Kruskal fixed point is initial, we apply the criterion from~\cite[Theorem~3.5]{freund-kruskal-gap}. As preparation, one shows that any~$r\in\se_{n+1}(1)$ lies in the range of~$\pi_n$, using induction over~$r$. To conclude that $\kappa_n$ is surjective, it suffices to observe that $\vartheta_0r\in\se^0_{n+1}(1)$ requires $S(r)\leq 1$, so that $r=\pi_n(s,t)$ forces $S(s)\leq 0$ and hence
\begin{equation*}
(s,t)\in\se^0_n(1)\times\se^0_{n+1}(1)\subseteq W_n(\se^0_{n+1}(1)).
\end{equation*}
Finally, we need to verify that
\begin{equation*}
r\in\supp_Z(\sigma)\quad\To\quad h(r)<h(\kappa_n(\sigma))
\end{equation*} 
holds for all $\sigma\in W_n(Z)=W_n(\se^0_{n+1}(1))$. For $\sigma=0$ we have $\supp_Z(\sigma)=\emptyset$, so that the condition is void. For $\sigma=(s,t)$ we have $\supp_Z(\sigma)=\{t\}$, which means that the claim amounts to $h(t)<h(\kappa_n(s,t))$. This reduces to $h(t)\leq h(\pi_n(s,t))$, which is readily established by induction over~$s$.
\end{proof}

We have just seen a reconstruction of~$\se^0_n(1)$ via iterated Kruskal fixed points. In the following, we show that the parallel construction for Bachmann-Howard fixed points yields the orders~$OT_n[0]$ from~\cite[Section~5]{MRW-linear}. Let us first recall the latter.

\begin{definition}\label{def:ot}
For each $n\in\mathbb N$ we construct a set $\ot_n$ of terms. Simultaneously, each term $s\in\ot_n$ is associated with a number $S(s)\in\{-1,\dots,n-1\}$ and finite subsets $K_i(s)\subseteq\ot_n$ for $i\geq -1$, according to the following clauses:
\begin{enumerate}[label=(\roman*)]
\item We have a term $\overline 0\in\ot_n$ with $S(\overline 0)=-1$ and $K_i(\overline 0)=\emptyset$.
\item Given $s,t\in\ot_n$ and $i<n$ with $i\geq\max\{S(s)-1,S(t),0\}$ and $K_i(s)=\emptyset$, we add a term $\theta_ist\in\ot_n$ with $S(\theta_ist)=i$ and
\begin{equation*}
K_j(\theta_ist)=\begin{cases}
\{\theta_ist\} & \text{if $i\leq j$},\\
K_j(s)\cup K_j(t) & \text{otherwise}.
\end{cases}
\end{equation*}
\end{enumerate}
To define a binary relation~$<_{\ot}$ on $\ot_n$, we declare that $r<_{\ot}r'$ holds if, and only if, one of the following clauses applies:
\begin{enumerate}[label=(\roman*')]
\item We have $r=\overline 0$ and $r'\neq\overline 0$.
\item We have $r=\theta_ist$ and $r'=\theta_js't'$ with $i<j$.
\item We have $r=\theta_ist$ and $r'=\theta_is't'$, and one of the following holds:
\begin{itemize}[label={--}]
\item We have $s<_{\ot}s'$ and $t<_{\ot}r'=\theta_is't'$.
\item We have $s=s'$ and $t<_{\ot}t'$.
\item We have $\theta_ist=r\leq_{\ot}t'$ (i.\,e. $r<_{\ot}t'$ or $r=t'$ as terms).
\end{itemize}
\end{enumerate}
Finally, we set $\ot^0_n=\{s\in\ot_n\,|\,S(s)\leq 0\}$. We will also write $<_{\ot}$ for the restriction of the given relation to this set.
\end{definition}

The reader may have noticed that our definition of~$<_{\ot}$ looks somewhat different from the one in~\cite[Definition~37]{MRW-linear}. This has the following context: First, the cited reference also defines~$<_{\ot}$ on a larger set that contains terms $\theta_ist$ with~$K_i(s)\neq\emptyset$. This leads to additional conditions that are void in our case. Secondly, we have declared that $\theta_ist\leq_{\ot}t'$ entails $\theta_ist<_{\ot}\theta_is't'$. In~\cite{MRW-linear}, this implication has $s'<_{\ot}s$ as an additional assumption. At the same time,~\cite[Lemma~23]{MRW-linear} shows that $t'<\theta_is't'$ does always hold. As it is transitive, the order from~\cite{MRW-linear} will thus satisfy our stronger implication, so that the two definitions coincide. On a different note,~\cite[Definition~40]{MRW-linear} declares that we have
\begin{equation*}
OT_n[0]\stackrel{?}{=}\{s\in\ot_n\,|\,s<_{\ot}\theta_0\overline 0\overline 0\}.
\end{equation*}
It is easy to see that this yields $OT_n[0]=\{\overline 0\}$, which contradicts~\cite[Corollary~6]{MRW-linear}. Presumably, this is a typo that should be corrected into
\begin{equation*}
OT_n[0]=\{s\in\ot_n\,|\,s<_{\ot}\theta_1\overline 0\overline 0\}.
\end{equation*}
Strictly speaking, this is only explained when we have $n>1$, so that $\theta_1\overline 0\overline 0$ is available. For the corrected definition one can show $OT_n[0]=\ot^0_n$ by a straightforward induction over terms. The following was left implicit in~\cite{MRW-linear}:

\begin{lemma}\label{lem:ot-linear}
The relation $<_{\ot}$ is a linear order on~$\ot_n$, for each $n\in\mathbb N$.
\end{lemma}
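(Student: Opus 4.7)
My plan is to adapt the argument from Lemma~\ref{lem:T(X)-linear}. Define $h\colon\ot_n\to\mathbb N$ by $h(\overline 0)=0$ and $h(\theta_ist)=h(s)+h(t)+1$. The only subcase in Definition~\ref{def:ot} where complexity fails to strictly decrease on both sides is the third bullet of~(iii'), which compares $r$ with the proper subterm $t'$ of $r'$; hence $<_{\ot}$ is a well-defined binary relation, decidable by recursion on $h(r)+h(r')$.

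The heart of the argument is an analogue of the secure-subterm machinery from Lemma~\ref{lem:T(X)-linear}. For $r=\theta_ist$ declare $t$ to be $r$-secure, and if $\theta_iuv$ is $r$-secure (with the \emph{same} outer index $i$), declare $v$ to be $r$-secure. Every $r$-secure term is a proper subterm of $r$, so has strictly smaller $h$-value. Establish simultaneously, by induction on~$h(r)$, the statements
(1) if $t'$ is $r$-secure then $r\not\leq_{\ot}t'$, and (2) $r\not<_{\ot}r$.
For~(1), analyse the form of $t'$: if $t'=\overline 0$, appeal to~(i'); if $t'=\theta_juv$ with $j<i$, clause~(ii') yields $t'<_{\ot}r$, so $r\leq_{\ot}t'$ would contradict~(2); if $t'=\theta_iuv$ (same $i$), inspect the three subcases of~(iii') for $r\leq_{\ot}t'$, noting that equality is excluded on size grounds, that the subcase ``$r\leq_{\ot}v$'' is ruled out because $v$ is again $r$-secure by the recursive clause, and that the remaining subcases produce inequalities such as $t<_{\ot}r$ which reduce to a forbidden comparison one level down. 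For~(2) applied to $r=\theta_ist$: the first two subcases of~(iii') give $s<_{\ot}s$ or $t<_{\ot}t$, excluded by the induction hypothesis, while the third gives $r\leq_{\ot}t$, excluded by~(1) since $t$ is $r$-secure.

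Trichotomy and transitivity then follow by routine inductions on the combined complexity ($h(r)+h(r')$ for trichotomy, $h(r)+h(r')+h(r'')$ for transitivity), splitting on the outer indices in~(ii')/(iii') and invoking~(1), (2), and the induction hypothesis to kill would-be cycles. The main obstacle, exactly as in Lemma~\ref{lem:T(X)-linear}, is the bookkeeping in the simultaneous induction for~(1) and~(2): one has to verify that the subcases of~(iii') only force comparisons with terms that are still $r$-secure (or of strictly smaller complexity), so that the induction closes. Here we are helped by the constraint $i\geq S(t)$ built into clause~(ii) of Definition~\ref{def:ot}, which forces outer indices along the right spine to be non-increasing and thus keeps the recursive descent inside the secure-subterm chain.
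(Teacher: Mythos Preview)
Your proof is essentially correct, though the writeup has a few sloppy moments: the inequality ``$t<_{\ot}r$'' you mention for the remaining subcases of~(1) should presumably read ``$t<_{\ot}t'$'' (first bullet) or ``$t<_{\ot}v$'' (second bullet), each of which is ruled out because the right-hand side is $t$-secure and the main induction hypothesis for~(1) applies to~$t$; and your appeal to~(2) in the $j<i$ case is unnecessary, since $r\leq_{\ot}t'$ is directly excluded by inspecting the clauses (neither~(ii') nor~(iii') can fire when $i>j$, and equality is out on size grounds). With a side induction on~$h(t')$ made explicit, the secure-subterm argument closes.

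The paper's proof, however, takes a different and markedly simpler route: it proves transitivity \emph{first}, by a routine induction on $h(r)+h(r')+h(r'')$, and then uses transitivity to obtain irreflexivity. The reduction of $r=\theta_ist\not<_{\ot}r$ to $r\not\leq_{\ot}t$ is the same as yours, but for the latter the paper simply observes (in the only interesting case $t=\theta_is't'$) that the third bullet of~(iii') trivially gives $t<_{\ot}r$ from $t\leq_{\ot}t$; if $r\leq_{\ot}t$ held, transitivity would then yield $t<_{\ot}t$, contradicting the induction hypothesis for~$t$. This bypasses the secure-subterm bookkeeping entirely. What your adaptation of Lemma~\ref{lem:T(X)-linear} buys is a self-contained irreflexivity argument that does not presuppose transitivity; what the paper's order of attack buys is that the binary term structure (the second argument~$t$ makes $t<_{\ot}\theta_ist$ immediate) lets one skip the secure-subterm induction altogether.
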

\begin{proof}
Define $h:\ot_n\to\mathbb N$ recursively by $h(\overline 0)=0$ and $h(\theta_ist)=h(s)+h(t)+1$. A straightforward induction over $h(r)+h(s)+h(t)$ shows that the conjunction of $r<_{\ot}s$ and $s<_{\ot}t$ implies $r<_{\ot}t$. We can now establish $r\not<_{\ot}r$ by induction over~$h(r)$. This is readily reduced to $r=\theta_ist\not\leq_{\ot}t$. In view of $\theta_ist\in\ot_n$ we must have~$S(t)\leq i$, which leaves a term of the form $t=\theta_is't'$ as the only interesting case (with the same~$i$ as in $r=\theta_ist$). We trivially have $\theta_is't'\leq_{\ot}t$, which provides the inequality $t=\theta_is't'<_{\ot}\theta_ist=r$. Now if $r\leq_{\ot}t$ was true, then transitivity would yield $t<_{\ot}t$, which contradicts the induction hypothesis. Finally, trichotomy between~$s$ and $t$ follows by a straightforward induction over~$h(s)+h(t)$.
\end{proof}

Parallel to the above, our aim is to show that $\ot^0_{n+1}$ is the initial Bachmann-Howard fixed point of the following $\lo$-dilators~$D_n$ over the empty order~$0$.

\begin{definition}\label{def:D_n}
For each~$n\in\mathbb N$ and any linear order~$X$ we put
\begin{equation*}
D_n(X)=1+\ot^0_n\times X=\{0\}\cup\{(s,x)\,|\,s\in\ot^0_n\text{ and }x\in X\}.
\end{equation*}
To define a linear order on~$D_n(X)$, we declare that $0$ is the smallest element, and that we have
\begin{equation*}
(s,x)<_{D_n(X)}(s',x')\quad\Leftrightarrow\quad s<_{\ot}s'\text{ or }(s=s'\text{ and }x<_X x').
\end{equation*}
For any embedding $f:X\to Y$, we define a function $D_n(f):D_n(X)\to D_n(Y)$ by $D_n(f)(0)=0$ and $D_n(f)(s,x)=(s,f(x))$. Also, let $\supp_X:D_n(X)\to[X]^{<\omega}$ be given by $\supp_X(0)=\emptyset$ and $\supp_X(s,x)=\{x\}$.
\end{definition}

Let us construct the functions that Definition~\ref{def:BH-fp} requires:

\begin{definition}
Let the map $\ot_n\ni s\mapsto s^+\in\ot_{n+1}$ be given by the recursive clauses $\overline 0^+=\overline 0$ and $(\theta_ist)^+=\theta_{i+1}s^+t^+$. Now define $\vartheta_n:D_n(\ot^0_{n+1})\to\ot^0_{n+1}$ by
\begin{equation*}
\vartheta_n(0)=\overline 0\quad\text{and}\quad\vartheta_n(s,t)=\theta_0s^+t.
\end{equation*}
Let us also agree to write $\iota_n:0\to\ot^0_{n+1}$ for the empty function.
\end{definition}

To justify the definition of $s^+\in\ot_{n+1}$, one should simultaneously check
\begin{equation*}
K_{i+1}(s^+)=\{t^+\,|\,t\in K_i(s)\}\quad\text{and}\quad S(s^+)=\begin{cases}
S(s)=-1 & \text{if $s=\overline 0$},\\
S(s)+1 & \text{otherwise}.
\end{cases}
\end{equation*}
In particular we get $K_0(s^+)=\emptyset$, as an easy induction over~$s$ yields~$K_{-1}(s)=\emptyset$. For $s\in\ot^0_n$ we also obtain $S(s^+)\leq 1$, so that we indeed have $\vartheta_n(s,t)=\theta_0s^+t\in\ot^0_{n+1}$ for $(s,t)\in D_n(\ot^0_{n+1})$. We can now establish the promised result:

\begin{theorem}\label{thm:ot_n-fixed-point}
The tuple $(\ot^0_{n+1},\iota_n,\vartheta_n)$ is an initial Bachmann-Howard fixed point of~$D_n$ over~$0$, for each number~$n\in\mathbb N$.
\end{theorem}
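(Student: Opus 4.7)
The plan is to mirror the proof of Theorem~\ref{thm:T-n+1-deriv}, using Theorem~\ref{thm:initial-characterize}(i) to establish initiality. The proof splits into a key lemma on the map $s\mapsto s^+$, verification of the three Bachmann-Howard fixed point conditions, and the initiality criterion.

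First I would prove that $\ot_n\ni s\mapsto s^+\in\ot_{n+1}$ is an order embedding. Linearity (Lemma~\ref{lem:ot-linear}) combined with trichotomy means it suffices to establish order preservation, which I would prove by induction on $h(s)+h(s')$ using the clauses of Definition~\ref{def:ot}: clause~(i') is trivial; clause~(ii') shifts $i<j$ to $i+1<j+1$; and each sub-case of clause~(iii') transfers directly via the induction hypothesis, the third bullet requiring the translation of $s\leq v'$ into $s^+\leq (v')^+$. Since $s^+$ replaces every $\theta_i$ by $\theta_{i+1}$, an easy induction on $s$ yields $K_{i+1}(s^+)=\{t^+\,|\,t\in K_i(s)\}$, and in particular $K_0(s^+)=\emptyset$ since $K_{-1}$ is always empty. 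Conversely, any $u\in\ot_{n+1}$ with $K_0(u)=\emptyset$ contains no $\theta_0$ subterm (a top-level $\theta_0$ would force $u\in K_0(u)$); hence $u=s^+$ for a unique $s\in\ot_n$ obtained by lowering each index by one. Together with the fact that $S(s^+)=-1$ for $s=\overline 0$ and $S(s^+)=S(s)+1$ otherwise, this yields a bijection between $\ot^0_n$ and $\{u\in\ot_{n+1}\,|\,S(u)\leq 1,\ K_0(u)=\emptyset\}$.

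Next I would verify that $(\ot^0_{n+1},\iota_n,\vartheta_n)$ is a Bachmann-Howard fixed point of~$D_n$ over~$0$. Conditions~(i) and~(ii) of Definition~\ref{def:BH-fp} are trivial or vacuous, since $\iota_n$ is empty. For condition~(iii), the case $\sigma=0\in D_n(\ot^0_{n+1})$ is handled by $\vartheta_n(0)=\overline 0$ being the minimum of $\ot^0_{n+1}$ via clause~(i') of Definition~\ref{def:ot}. For $\sigma=(s,t)$ and $\tau=(s',t')$, the hypothesis $\sigma<\tau$ combined with the support condition $t<\vartheta_n(\tau)=\theta_0(s')^+t'$ feeds directly into clause~(iii') of Definition~\ref{def:ot}: the sub-case $s<_{\ot}s'$ yields $s^+<(s')^+$ by the embedding lemma, matching the first bullet; the sub-case $s=s'$ and $t<t'$ matches the second bullet. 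The remaining support part of~(iii) demands $t<\theta_0 s^+t$: if $t=\overline 0$ this is clause~(i'), and otherwise $t=\theta_0 u v$ and the third bullet of~(iii') with $r=t\leq t$ gives the inequality.

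Finally I would verify the two criteria of Theorem~\ref{thm:initial-characterize}(i). The first, $\ot^0_{n+1}=\rng(\iota_n)\cup\rng(\vartheta_n)$, follows from the bijection established above: any non-$\overline 0$ element of $\ot^0_{n+1}$ has the form $\theta_0 u v$ with $S(u)\leq 1$, $K_0(u)=\emptyset$, and $v\in\ot^0_{n+1}$, hence $u=s^+$ for some $s\in\ot^0_n$ and $\theta_0 u v=\vartheta_n(s,v)$. The second criterion is satisfied by the length function $h$ from Lemma~\ref{lem:ot-linear}, since $\supp_{\ot^0_{n+1}}(s,t)=\{t\}$ and $h(\theta_0 s^+t)=1+h(s^+)+h(t)>h(t)$. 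The hardest step is the order-embedding lemma for $s\mapsto s^+$: the induction in clause~(iii') is straightforward but bookkeeping-heavy, particularly for the third bullet, where one needs the induction hypothesis to move the inequality $s\leq v'$ through the $(\cdot)^+$ transformation; once this is in hand, the remainder of the theorem parallels the argument for Theorem~\ref{thm:T-n+1-deriv}.
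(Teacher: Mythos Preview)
Your proposal is correct and follows essentially the same approach as the paper: establish that $s\mapsto s^+$ is an order embedding, verify clause~(iii) of Definition~\ref{def:BH-fp}, and apply the criterion of Theorem~\ref{thm:initial-characterize}(i) via the height function~$h$ and the surjectivity argument based on $K_0(u)=\emptyset$. The only minor deviation is that you derive $t<_{\ot}\theta_0 s^+t$ directly from the third bullet of clause~(iii') (using $t\leq_{\ot}t$), whereas the paper deduces it from $\theta_0 s^+t\not\leq_{\ot}t$ (established in the proof of Lemma~\ref{lem:ot-linear}) together with trichotomy.
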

\begin{proof}
As preparation one checks that $s<_{\ot}t$ implies $s^+<_{\ot}t^+$, by a straightforward induction over~$h(s)+h(t)$ (cf.~\cite[Lemma~24]{MRW-linear}). Abbreviating $Z=\ot^0_{n+1}$, we now verify the two conditions from clause~(iii) of Definition~\ref{def:BH-fp}. First, we need to show that $z\in\supp_Z(\sigma)$ implies $z<_Z\vartheta_n(\sigma)$, for any $\sigma\in D_n(Z)$. In the case of $\sigma=0$ we have $\supp_Z(\sigma)=\emptyset$, which means that the condition is void. For $\sigma=(s,t)$ we have $\supp_Z(s,t)=\{t\}$, so that the claim amounts to
\begin{equation*}
t<_Z\vartheta_n(s,t)=\theta_0s^+t.
\end{equation*}
In the proof of Lemma~\ref{lem:ot-linear} we have shown $\theta_0s^+t\not\leq_{\ot}t$, which suffices due to trichotomy (in view of $S(t)\leq 0$ it is easy to give a direct argument as well). The second condition from clause~(iii) of Definition~\ref{def:BH-fp} does essentially amount to
\begin{equation*}
(s_0,t_0)<_{D_n(Z)}(s_1,t_1)\text{ and }t_0<_Z\theta_0s_1^+t_1\quad\To\quad\theta_0s_0^+t_0<_Z\theta_0s_1^+t_1.
\end{equation*}
Assuming that the left side holds, we have $s_0^+=s_1^+$ and $t_0<_{\ot}t_1$, or $s_0^+<_{\ot}s_1^+$ and $t_0<_Z\theta_0s_1^+t_1$, using the fact that we have shown as preparation. In either case, the right side follows by clause~(iii') of Definition~\ref{def:ot}. To show that our fixed point is initial, we use the criterion from Theorem~\ref{thm:initial-characterize}. Let us first observe
\begin{equation*}
h(t)<h(s^+)+h(t)+1=h(\theta_0s^+t)=h(\vartheta_n(s,t)),
\end{equation*}
for $h:\ot^0_{n+1}\to\mathbb N$ as in the proof of Lemma~\ref{lem:ot-linear}. It remains to show that the function $\vartheta_n:D_n(\ot^0_{n+1})\to\ot^0_{n+1}$ is surjective. An easy induction shows that any element $s\in\ot_{n+1}$ with $K_0(s)=\emptyset$ can be written as $s=s_0^+$ for some $s_0\in\ot_n$. For $S(s)\leq 1$ we get $s_0\in\ot^0_n$, and it is straightforward to conclude. 
\end{proof}

Finally, we use our general approach to re-derive two known results. The following was first shown as a consequence of~\cite[Lemma~25]{MRW-linear}.

\begin{corollary}
For each $n\in\mathbb N$ we have a linearization $f_n:\ot^0_n\to\se^0_n(1)$ of the partial order~$\se^0_n(1)$ (sequences with gap condition) by the linear order $\ot^0_n$ (binary collapsing functions, cf.~\cite[Section~5]{MRW-linear}).
\end{corollary}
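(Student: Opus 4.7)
I would proceed by induction on $n$. The base case $n = 0$ is immediate: both $\ot^0_0$ and $\se^0_0(1)$ reduce to the singleton $\{\overline 0\}$, so the identity map serves as the desired linearization.

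For the inductive step, suppose $f_n: \ot^0_n \to \se^0_n(1)$ is a linearization. The plan is to lift $f_n$ to a linearization of the $\lo$-dilator $D_n$ (Definition~\ref{def:D_n}) by the $\po$-dilator $W_n$ introduced before Theorem~\ref{thm:se_n-fixed-point-2}. Concretely, for each linear order $X$ one defines
\begin{equation*}
\nu^n_X: D_n(X) \to W_n(X), \qquad \nu^n_X(0) = 0, \qquad \nu^n_X(s, x) = (f_n(s), x).
\end{equation*}
A straightforward case analysis shows that each $\nu^n_X$ is a surjective quasi embedding: surjectivity follows from the surjectivity of $f_n$, while order-reflection comes from the parallel clauses defining the relations on $D_n(X)$ and $W_n(X)$, together with the fact that $f_n$ is itself order-reflecting. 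Naturality in $X$ is immediate. Hence $\nu^n: D_n \To W_n\!\restriction\!\lo$ is a linearization in the sense of Definition~\ref{def:linearization-dil}.

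Next I would verify that $W_n$ is flat (Definition~\ref{def:dil-flat}). Given a quasi embedding $g: X \to Y$ of partial orders and $\sigma \in W_n(Y)$ with $\supp_Y(\sigma) \subseteq \rng(g)$, the case $\sigma = 0$ is trivial (as $\supp_Y(0) = \emptyset$), while $\sigma = (s, y)$ forces $y \in \rng(g)$, so $y = g(x)$ and $\sigma = W_n(g)(s, x)$. The crucial point is that the first coordinate lives in $\se^0_n(1)$, which is independent of $X$, so no hypothesis on $g$ beyond being a quasi embedding is required.

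With these ingredients in place, Theorems~\ref{thm:quasi-embeddings-derivatives} and~\ref{thm:linearization-preserved} supply a linearization $(\nu^n)^+: \vartheta D_n \To \mathcal T W_n\!\restriction\!\lo$. Evaluating at the empty order and composing with the isomorphisms $\vartheta D_n(0) \cong \ot^0_{n+1}$ and $\mathcal T W_n(0) \cong \se^0_{n+1}(1)$ coming from Theorems~\ref{thm:ot_n-fixed-point} and~\ref{thm:se_n-fixed-point-2} (via uniqueness of initial fixed points up to isomorphism) yields the required linearization $f_{n+1}$. There is no serious obstacle in the argument itself; it is essentially a packaging of the general theory of Section~\ref{sect:linear-partial} together with the fixed-point reconstructions of the present section, the only mildly nontrivial verification being the flatness of $W_n$, which in turn hinges on the parameter-free nature of its first coordinate.
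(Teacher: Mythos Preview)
Your proposal is correct and follows essentially the same approach as the paper: induction on $n$ with the identity in the base case, lifting $f_n$ to a linearization $\nu:D_n\To W_n\!\restriction\!\lo$ via $\nu_X(0)=0$ and $\nu_X(s,x)=(f_n(s),x)$, noting that $W_n$ is flat, and then invoking Theorems~\ref{thm:quasi-embeddings-derivatives} and~\ref{thm:linearization-preserved} together with the fixed-point identifications of Theorems~\ref{thm:se_n-fixed-point-2} and~\ref{thm:ot_n-fixed-point}. Your write-up simply spells out a few verifications (surjectivity, order-reflection, flatness) that the paper leaves implicit.
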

\begin{proof}
We construct~$f_n$ by recursion over~$n\in\mathbb N$. In the base case, let $f_0$ be the identity on~$\ot^0_0=\{\overline 0\}=\se^0_0(1)$. Given $f_n$, we get a linearization $\nu:D_n\To W_n$ by setting $\nu_X(0)=0$ and $\nu_X(s,x)=(f_n(s),x)$. It is easy to see that~$W_n$ is flat (cf.~Definition~\ref{def:dil-flat}). Theorems~\ref{thm:quasi-embeddings-derivatives} and~\ref{thm:linearization-preserved} yield a linearization $\nu^+:\vartheta D_n\To\T W_n$. From Theorems~\ref{thm:se_n-fixed-point-2} and~\ref{thm:ot_n-fixed-point} we know that $\T W_n(0)$ and $\vartheta D_n(0)$ coincide with $\se^0_{n+1}(1)$ and $\ot^0_{n+1}$, respectively (up to isomorphism). Hence setting
\begin{equation*}
f_{n+1}=\nu^+_0:\ot^0_{n+1}=\vartheta D_n(0)\to\T W_n(0)=\se^0_{n+1}(1)
\end{equation*} 
completes the recursive construction.
\end{proof}

The following result was first shown in~\cite[Corollaries~6 and~7]{MRW-linear}. The proof in the cited paper goes via an addition free variant of the Veblen functions. In the author's opinion, our general approach leads to a much more transparent argument:

\begin{corollary}
For each $n\geq 1$, the linear order~$\ot^0_n$ has order type $\omega_{2n-1}$, which means that it realizes the maximal order type of the partial order~$\se^0_n(1)$.
\end{corollary}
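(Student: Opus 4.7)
The strategy is to iterate the construction $\omega_2$ from equation~(\ref{eq:double-exp-fixed-point}) and use the computation of $\omega_2$ on ordinals from~\cite{freund-predicative-collapsing}. The first step is to identify $\ot^0_{n+1}$ with $\omega_2(\ot^0_n)$: the $\lo$-dilator $D_n$ from Definition~\ref{def:D_n} is precisely the transformation $X\mapsto 1+\ot^0_n\times X$ that defines $\omega_2(\ot^0_n)=\vartheta D_n(0)$, and Theorem~\ref{thm:ot_n-fixed-point} together with the uniqueness of initial Bachmann-Howard fixed points yields $\ot^0_{n+1}\cong\omega_2(\ot^0_n)$. Thus $\ot^0_n$ satisfies the recursion $\ot^0_0=1$ and $\ot^0_{n+1}\cong\omega_2(\ot^0_n)$ from~(\ref{eq:gap-lin-alternative}).

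I would then compute the order type by induction on $n\geq 1$. For the base case $n=1$, I invoke $\omega_2(1+\alpha)\cong\omega_2^\alpha$ from~\cite{freund-predicative-collapsing} with $\alpha=0$ to obtain
\begin{equation*}
\ot^0_1\cong\omega_2(\ot^0_0)=\omega_2(1)=\omega_2(1+0)\cong\omega_2^0=\omega^{\omega^0}=\omega=\omega_1.
\end{equation*}
For the inductive step, assume $\ot^0_n$ has order type $\omega_{2n-1}$ with $n\geq 1$. Since $\omega_{2n-1}$ is a limit ordinal, we have $1+\omega_{2n-1}=\omega_{2n-1}$, so another application of the cited result gives
\begin{equation*}
\ot^0_{n+1}\cong\omega_2(\omega_{2n-1})\cong\omega_2(1+\omega_{2n-1})\cong\omega_2^{\omega_{2n-1}}=\omega^{\omega^{\omega_{2n-1}}}=\omega^{\omega_{2n}}=\omega_{2n+1},
\end{equation*}
as required.

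For the claim about maximal order types, I would recall (as stated in the introduction, citing~\cite{MRW-linear} and~\cite{schuette-simpson}) that $\omega_{2n-1}$ is the maximal order type of the well partial order $\se^0_n(1)\cong\overline{\mathbb S}_n[0]$. By the previous corollary we have a linearization $f_n\colon\ot^0_n\to\se^0_n(1)$, and any linearization has order type at least the maximal order type, while trivially at most; since $\ot^0_n$ has order type exactly $\omega_{2n-1}$, the linearization realizes it.

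The only non-routine ingredient is the identification $\ot^0_{n+1}\cong\omega_2(\ot^0_n)$, which is the crucial bridge between the syntactic definition of $\ot^0_n$ and the abstract Bachmann-Howard construction; here the uniqueness part of our theory (Proposition~\ref{prop:deriv-unique} together with Theorem~\ref{thm:initial-characterize}) does the work, so that no explicit computation with the term structure is needed beyond what is already contained in Theorem~\ref{thm:ot_n-fixed-point}.
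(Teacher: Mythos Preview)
Your proof is correct and follows essentially the same approach as the paper: both identify $\ot^0_{n+1}$ with the initial Bachmann-Howard fixed point $\vartheta D_n(0)=\omega_2(\ot^0_n)$ via Theorem~\ref{thm:ot_n-fixed-point} and then iterate the computation $\omega_2(1+\alpha)\cong\omega_2^\alpha$ from~\cite{freund-predicative-collapsing}; the only cosmetic difference is that the paper runs the induction from $n=0$ with the invariant $\ot^0_n\cong 1+\omega_{2n}^0$ (absorbing the ``$1+$'' uniformly), whereas you start at $n=1$ and use that $\omega_{2n-1}$ is a limit ordinal. One small slip in exposition: the clause ``any linearization has order type at least the maximal order type'' is false in general (the inequality goes the other way); what you actually need---and what suffices---is simply that your computed order type $\omega_{2n-1}$ coincides with the cited maximal order type of~$\se^0_n(1)$.
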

\begin{proof}
Recall that we have $\omega^\alpha_0=\alpha$ and $\omega^\alpha_{n+1}=\omega^{\omega^\alpha_n}$. We will show
\begin{equation*}
\ot^0_n\cong 1+\omega^0_{2n}
\end{equation*}
by induction over~$n\in\mathbb N$. This entails the first claim of the corollary, as $n\geq 1$ yields
\begin{equation*}
1+\omega^0_{2n}=\omega^0_{2n}=\omega^1_{2n-1}=\omega_{2n-1}.
\end{equation*}
In the base case of our induction, it suffices to recall $\ot^0_0=\{\overline 0\}$ and $\omega^0_0=0$. The induction step relies on~\cite[Theorem~2.2]{freund-predicative-collapsing}. To state the latter, consider an arbitrary ordinal~$\alpha$. Analogous to Definition~\ref{def:D_n}, we get an $\lo$-dilator~$D^\alpha$ with
\begin{equation*}
D^\alpha(X)=1+(1+\alpha)\times X.
\end{equation*}
According to the cited theorem, we have $\vartheta D^\alpha(0)\cong\omega_2^\alpha$. From Theorem~\ref{thm:ot_n-fixed-point} we know $\ot^0_{n+1}\cong\vartheta D_n(0)$ with $D_n(X)=1+\ot^0_n\times X$. Now the induction hypothesis provides a natural isomorphism $D_n\cong D^\alpha$ for $\alpha=\omega^0_{2n}$. In view of Lemma~\ref{lem:deriv-it-unique} (and the uniqueness of initial fixed points), we obtain
\begin{equation*}
\ot^0_{n+1}\cong\vartheta D_n(0)\cong\vartheta D^{\omega_{2n}^0}(0)\cong\omega_2^{\omega_{2n}^0}=\omega_{2(n+1)}^0=1+\omega_{2(n+1)}^0,
\end{equation*}
as needed to complete the induction step. Finally, let us give references for the fact that $\se^0_n(1)$ has maximal order type~$\omega_{2n-1}$, for $n\geq 1$. In the paragraph after Lemma~\ref{lem:se-partial-order} we have observed that our order~$\se^0_n(1)$ coincides with the order~$(T_n[0],\trianglelefteq)$ from~\cite[Section~2.2.3]{MRW-linear}. The cited reference shows that this order is isomorphic to a certain collection $\overline{\mathbb S}_n[0]$ of sequences with the strong gap condition. According to~\cite[Lemma~3 and Corollary~2]{MRW-linear} (based on work by Sch\"utte and Simpson~\cite{schuette-simpson}), that collection has maximal order type~$\omega_{2n-1}$, for $n\geq 1$.
\end{proof}

\bibliographystyle{amsplain}
\bibliography{Bachmann-Howard-derivatives}

\end{document}